\numberwithin{equation}{section}
\theoremstyle{definition}
\newtheorem{definition}{Definition}[section]
\newtheorem{question}[definition]{Question}
\theoremstyle{remark}
\newtheorem{remark}[definition]{Remark}
\theoremstyle{plain}
\newtheorem{theorem}[definition]{Theorem}
\newtheorem{result}[definition]{Result}
\newtheorem{lemma}[definition]{Lemma}
\newtheorem{proposition}[definition]{Proposition}
\newtheorem{example}[definition]{Example}
\newtheorem{corollary}[definition]{Corollary}
\newcommand{\eps}{\varepsilon}
\newcommand{\zt}{\zeta}
\newcommand{\zbar}{\overline{z}}
\newcommand{\tht}{\theta}
\newcommand{\bas}{\boldsymbol{\epsilon}}
\newcommand{\Rem}{\mathcal{R}}
\newcommand{\kob}{{\sf k}}
\newcommand\pd[3]{\frac{\partial^{{#3}}{#1}}{\partial{#2}}}
\newcommand{\laplc}{\triangle}
\newcommand{\cHess}{\mathfrak{H}_{\raisebox{-2pt}{$\scriptstyle {\mathbb{C}}$}}}
\newcommand{\bdy}{\partial}
\newcommand{\OM}{\Omega}
\newcommand{\D}{\mathbb{D}}
\newcommand{\ome}{\mathcal{D}}
\newcommand{\U}{\mathscr{U}}
\newcommand{\smoo}{\mathcal{C}}
\newcommand{\hol}{\mathcal{O}}
\newcommand{\ahat}{\widehat{a}}
\newcommand{\bcdot}{\boldsymbol{\cdot}}
\newcommand{\lrarw}{\longrightarrow}
\newcommand{\iness}{\partial^{\raisebox{-1pt}{$\scriptstyle{{\coll}}$}}\!}
\newcommand\enve[1]{{\widehat{{#1}}}^{\raisebox{-1pt}{$\scriptstyle{{\coll}}$}}}
\newcommand\nonh[1]{{#1}^{\raisebox{0pt}{$\scriptstyle{{\mathcal{N}}}$}}}
\newcommand{\excp}{\mathcal{E}}
\newcommand{\coll}{\mathcal{S}}
\newcommand{\tgt}{\widetilde{{\ T}}}
\newcommand{\ombar}{\overline{\Omega}}
\newcommand{\inc}{\text{\textbf{\j}}}
\newcommand{\wt}{\widetilde}
\newcommand{\ov}{\overline}
\newcommand{\acorn}{\mathscr{Q}}
\newcommand{\bloz}{{\text{\scriptsize{$\blacklozenge$}}}}
\newcommand{\hkim}{\text{H.~\!Kim~\emph{et al.}}}
\newcommand{\Z}{\mathbb{Z}}
\newcommand{\N}{\mathbb{N}}
\newcommand{\Cn}{\mathbb{C}^n}
\newcommand{\C}{\mathbb{C}} 
\newcommand{\R}{\mathbb{R}}
\newcommand{\re}{{\sf Re}}
\newcommand{\im}{{\sf Im}}
\begin{document}

\title[Holomorphic homogeneous regular domains \& the squeezing function]{A new family of holomorphic homogeneous
regular \\ domains and some questions on the squeezing function}

\author{Gautam Bharali}
\address{Department of Mathematics, Indian Institute of Science, Bangalore 560012, India}
\email{bharali@iisc.ac.in}

\begin{abstract}
We revisit the phenomenon where, for certain domains $D$, if the squeezing function $s_D$ extends
continuously to a point $p\in \bdy{D}$ with value $1$, then $\bdy{D}$ is strongly pseudoconvex
around $p$. In $\C^2$, we present weaker conditions under which the latter conclusion is
obtained. In another direction, we show that there are bounded domains $D\Subset \C^n$, $n\geq 2$,
that admit large $\bdy{D}$-open subsets $\mathscr{O}\subset \bdy{D}$ such that $s_D\to 0$
approaching any point in $\mathscr{O}$. This is impossible for planar domains. We pose a few questions
related to these phenomena. But the core result of this paper identifies a new family of
holomorphic homogeneous regular domains. We show via a family of examples how abundant domains satisfying
the conditions of this result are.  
\end{abstract}

\keywords{Holomorphic homogeneous regular domains, squeezing function}
\subjclass[2010]{Primary: 32F45, 32T25; Secondary: 32A19, 32F18}

\maketitle

\vspace{-0.6cm}
\section{Introduction and statement of results}\label{S:intro}
The unifying theme of the results in this paper is a closer look at the squeezing function.
We begin by recalling its definition. The \emph{squeezing function} of a bounded domain $D\subset \C^n$, 
denoted by $s_D$, is defined as
\[
  s_D(z)\,:=\,\sup\{s_D(z;F) \mid F: D\to B^n(0,1) \; \text{is a holomorphic embedding with $F(z)=0$}\}
\]
where, for each $F: D\to B^n(0,1)$ as above, $s_D(z;F)$ is given by
\[
  s_D(z;F)\,:=\,\sup\{r>0 : B^n(0,r)\subset F(D)\}.
\]
The squeezing function was introduced by Deng--Guan--Zhang \cite{dengGuanZhang:spsfbd12}, who were
motivated by closely related notions in the work of Yeung \cite{yeung:gdusf09}. Clearly, $s_D$ is a biholomorphic
invariant. This object has been the centre of considerable attention with regard to the following
question: what complex-analytic properties of $D$ do the properties of $s_D$ encode?
This paper, in some sense, focuses on the crudest properties that $s_D$ can have on reasonably
large classes of domains. 

\subsection{Properties of $\boldsymbol{\bdy{D}}$}\label{SS:boubdary} Our first result is motivated
by the following pair of theorems.

\begin{result}\label{R:h-ext}
Let $D$ be a bounded domain in $\C^n$ and let $p\in \bdy{D}$. 
\begin{enumerate}[leftmargin=27pt, label=$(\alph*)$]
  \item\label{I:Verma_Mahajan} {\rm (Mahajan--Verma, \cite[Theorem~1.2]{mahajanVerma:ctbi19})}
  Suppose $\bdy{D}$ is $\smoo^\infty$-smooth, Levi-pseudoconvex and of finite type near $p$, and $p$ is
  an $h$-extendible boundary point. If $\lim_{D\ni z\to p}s_D(z) = 1$, then $p$ is a strongly
  pseudoconvex point.
  \item\label{I:Nikolov} {\rm (Nikolov, \cite[Theorem~1]{nikolov:bsfnhebp18})} Suppose $D$ is pseudoconvex
  and has $\smoo^\infty$-smooth boundary, and $p$ is an $h$-extendible boundary point. If,
  for a nontangential sequence $\{z_\nu\}\subset D$ with $z_\nu\to p$, $\lim_{\nu\to \infty}s_D(z_{\nu}) = 1$,
  then $p$ is a strongly pseudoconvex point.
\end{enumerate}
\end{result}

Since $h$-extendible boundary points will not be a focus of this paper, we shall not formally define what they are
(nor what the Catlin multitype\,---\,which is required to define $h$-extendibility\,---\,is). But to cite an example
of $h$-extendibility: if a boundary point $p$ of a domain $D\subset \C^2$, such that $\bdy{D}$ is smooth and
pseudoconvex around it, is of finite type, then $p$ is $h$-extendible. We must also mention here the
result of Joo--Kim \cite{jooKim:bpwsft118} for domains in $\C^2$, which Result~\ref{R:h-ext} generalises.
\smallskip

A closer look at the \emph{proof} of Result~\ref{R:h-ext}-\ref{I:Verma_Mahajan} reveals
that, with the hypotheses thereof, one does \textbf{not} require $s_D$ to extend continuously to
$D\cup\{p\}$ to infer that $p$ is a strongly pseudoconvex point. The proof by 
Verma--Mahajan shows that, given all other assumptions, if, for a sequence $\{z_\nu\}$ contained
in the inward normal to $\bdy{D}$ at $p$ with $z_\nu\to p$, $\lim_{\nu\to \infty}s_D(z_{\nu}) = 1$,
then $p$ is a strongly pseudoconvex point. This resembles the hypothesis in
Result~\ref{R:h-ext}-\ref{I:Nikolov}, which, however, requires a global condition on $\bdy{D}$. These
observations would lead an analyst to ask what may be
the weakest conditions on the triple $(p, \bdy{D}, s_D)$ that would lead to the conclusion in
Result~\ref{R:h-ext}. Specifically, given a bounded domain $D$ and $p\in \bdy{D}$, assuming:
\begin{itemize}[leftmargin=24pt]
  \item existence of a sequence $\{z_{\nu}\}\subset D$ with $z_{\nu}\to p$ that is
  \textbf{not} necessarily non-tangential;
  \item smoothness and Levi-pseudoconvexity of $\bdy{D}$ \textbf{only} near $p$; and
\end{itemize}
assuming $p$ is an $h$-extendible point, then does $\lim_{\nu\to \infty}s_D(z_{\nu}) = 1$ imply that $p$ is
a strongly pseudoconvex point?
\smallskip

Now, the proofs of parts~\ref{I:Verma_Mahajan} and~\ref{I:Nikolov} of Result~\ref{R:h-ext} rely, to
differing extents, on the scaling method. This method has proven to be very effective in addressing questions
involving the squeezing function. The principal difficulty in answering the above question is as follows.
Given a pair $(D,p)$ as above and a sequence
$\{z_{\nu}\}\subset D$ ($z_{\nu}\to p$) that does not approach $p$ non-tangentially, the geometry of the
limits\,---\,in the Hausdorff sense\,---\,of convergent subsequences of the sequence $\{D_{\nu}\}$ of rescaled
domains associated with $\{z_{\nu}\}$ is not clear enough to support further analysis. That said,
given that $p\in \bdy{D}$ is assumed to be $h$-extendible, one knows that there are \textbf{certain} 
partially tangential approach-regions $\Gamma\subset D$, with vertex at $p$, such that
if $\{z_{\nu}\}\subset \Gamma$, then the
difficulty alluded to does not arise.
\smallskip

Such an approach-region $\Gamma$ is determined by the Catlin multitype of $p$.
Thus, ``extending'' Result~\ref{R:h-ext}-\ref{I:Nikolov} by assuming $\{z_{\nu}\}$ to lie in $\Gamma$
is a bit misconceived: if one knows what $\Gamma$ is, then one already knows
the Catlin multitype of $p$ and, hence, whether or not $p$ is a strongly pseudoconvex point! Instead, since we
assume that $\bdy{D}$ is smooth around $p$, the natural approach regions
to consider (\textbf{but} see Remark~\ref{R:BS} below)\,---\,which are genuinely
osculatory (i.e., \textbf{not} non-tangential)\,---\,are those implicit in our first result:
Theorem~\ref{T:conv_to_1} below.
But first, we give a definition. 

\begin{definition}\label{D:parabol}
Let $D\varsubsetneq \C^n$ be a domain, $p\in \bdy{D}$, and assume that $\bdy{D}$ is $\smoo^1$-smooth
near $p$. Let $\pi_p$ denote the (real) orthogonal projection of $\R^{2n}$\,$(\cong \C^{n})$ onto the
tangent space $T_p(\bdy{D})$ relative to the standard inner product on $\R^{2n}$. We say that a sequence
$\{z_\nu\}\subset D$ with $z_\nu\to p$ has \emph{paraboloidal approach to $p$} if there exists a $C>0$ such that
\begin{equation}\label{E:parabol}
  \|\pi_p(z_{\nu}-p)\|\,\leq\,C\sqrt{{\rm dist}(z_{\nu}; \bdy{D})} \; \; \text{$\forall \nu$ sufficiently large.}
\end{equation}
\end{definition}

We now provide an answer to the question raised above. For simplicity, we state our first result for
domains in $\C^2$. We defer discussions on what can be said for domains in $\C^n$, $n\geq 3$, to
Section~\ref{S:ques}

\begin{theorem}\label{T:conv_to_1}
Let $D$ be a bounded domain in $\C^2$ and let $p\in \bdy{D}$. Suppose $\bdy{D}$ is $\smoo^\infty$-smooth and
Levi-pseudoconvex near $p$, and $p$ is of finite type. If, for some sequence $\{z_\nu\}\subset D$
with paraboloidal approach to $p$, $\lim_{\nu\to \infty}s_D(z_{\nu}) = 1$,
then $p$ is a strongly pseudoconvex point.
\end{theorem}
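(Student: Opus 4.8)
The plan is to run the scaling method of Pinchuk, in the form adapted to smooth finite-type domains in $\CC$, and then to pass the squeezing function to the limit; the role of the paraboloidal-approach hypothesis will be to force the limiting model to retain the weak pseudoconvexity at $p$. First, write $2m$ for the (finite) type of $p$; if $m=1$ then $p$ is strongly pseudoconvex and there is nothing to prove, so assume $m\geq 2$ and seek a contradiction. After a local holomorphic change of coordinates centred at $p$, $D$ is given near $p$ by a defining function $\re(z_2)+P(z_1)+(\text{terms negligible for the scaling})$, where $P$ is a subharmonic, non-harmonic real polynomial with no harmonic terms and vanishing to order at least $3$ at $0$ --- its quadratic term is absent precisely because $p$ is not strongly pseudoconvex. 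Put $\delta_\nu:={\rm dist}(z_\nu;\bdy D)$ and let $\xi_\nu\in\bdy D$ be a nearest point. Let $\Lambda_\nu$ be the associated scaling: a rigid motion carrying $\xi_\nu$ to $0$ with its inward normal along the negative $\re(z_2)$-axis, the shears normalizing the expansion in $z_2$, and an anisotropic dilation with ratio $\asymp\delta_\nu^{-1}$ in $z_2$ and $\asymp\delta_\nu^{-1/(2\ell)}$ in $z_1$, where $2\ell$ ($2\leq\ell\leq m$) is the degree dictated by the lowest-order part of $P$. The standard analysis of this scaling gives, along a subsequence, $D_\nu:=\Lambda_\nu(D)\to D_\infty$ in the local Hausdorff sense, with
\[
  D_\infty\,=\,\{(z_1,z_2)\in\CC : \re(z_2)+\widehat P(z_1)<0\},
\]
where $\widehat P$ is a subharmonic, non-harmonic polynomial with no harmonic terms; in particular $D_\infty$ is complete Kobayashi-hyperbolic. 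I expect the crux --- and the main obstacle --- to be the verification that $\widehat P$ has degree $\geq 4$, equivalently that the limiting model inherits the non-strong-pseudoconvexity of $p$ rather than collapsing to a ball-type model.

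That is exactly where the hypothesis is used. Since $\xi_\nu$ is the nearest boundary point to $z_\nu$, the vector $z_\nu-\xi_\nu$ points along the inward normal and has length $\asymp\delta_\nu$, so $w_\nu:=\Lambda_\nu(z_\nu)\to(0,-1)\in D_\infty$. On the other hand, by \eqref{E:parabol} the tangential displacement of $z_\nu$, hence of $\xi_\nu$, from $p$ is $\leq C\sqrt{\delta_\nu}=C\delta_\nu^{1/2}$, which is $\ll\delta_\nu^{1/(2\ell)}$ since $\ell\geq 2$; thus $\xi_\nu$ sits well inside the natural (anisotropic) $\delta_\nu$-box at $p$. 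Consequently the Levi coefficient contributed by any mild strong pseudoconvexity at $\xi_\nu$, which is $O\!\left(({\rm dist}(\xi_\nu,p))^2\right)=O(\delta_\nu)$, becomes $O(\delta_\nu^{1/\ell})=o(1)$ after the dilation, and similarly --- using subharmonicity to bound the intermediate-degree coefficients --- the lower-order terms of the localization at $\xi_\nu$ disappear in the limit. Hence no quadratic term survives, $\widehat P$ has degree $\geq 4$, and $D_\infty$ is genuinely the weakly pseudoconvex model attached to $p$. (Two remarks: the exponent $1/2$ in \eqref{E:parabol} is critical, since for any smaller power-type approach one can find finite types $2m$ for which the above contribution would blow up instead of vanishing; and if $\{z_\nu\}$ were allowed to approach $p$ much more tangentially, $D_\nu$ could converge instead to a model coming from nearby, possibly strongly pseudoconvex, boundary points, and no contradiction would follow. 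This is the step that genuinely departs from the proofs behind Result~\ref{R:h-ext}.)

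Finally I would push the squeezing function to the limit. Each $\Lambda_\nu$ is a polynomial automorphism of $\CC$, so $s_{D_\nu}(w_\nu)=s_D(z_\nu)\to 1$. Take near-extremal embeddings $F_\nu:D_\nu\hookrightarrow B^2(0,1)$ with $F_\nu(w_\nu)=0$ and $B^2(0,\,s_{D_\nu}(w_\nu)-1/\nu)\subset F_\nu(D_\nu)$. Since every compact subset of $D_\infty$ eventually lies in $D_\nu$, a normal-families argument yields a subsequential limit $F:D_\infty\to\ov{B^2(0,1)}$ with $F((0,-1))=0$; the inverses $F_\nu^{-1}$, defined on balls exhausting $B^2(0,1)$ and confined to a compact subset of $D_\infty$ by Kobayashi-distance estimates in $D_\nu$ together with the completeness of $D_\infty$, converge to some $G:B^2(0,1)\to D_\infty$ with $F\circ G={\rm id}$ and $G\circ F={\rm id}$. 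Thus $F:D_\infty\to B^2(0,1)$ is a biholomorphism (equivalently, $s_{D_\infty}((0,-1))=1$, and one invokes the Deng--Guan--Zhang rigidity theorem, applicable here since $D_\infty$ is complete hyperbolic). But a model $\{\re(z_2)+\widehat P(z_1)<0\}$ with $\widehat P$ subharmonic, non-harmonic, with no harmonic terms and of degree $\geq 4$ is never biholomorphic to $B^2(0,1)$ --- for instance because $B^2(0,1)$ is homogeneous while the automorphism group of such a model cannot act transitively on it. This contradiction forces $m=1$, and therefore $p$ is a strongly pseudoconvex point.
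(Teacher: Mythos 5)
Your proposal follows the same route as the paper: apply the Bedford--Pinchuk scaling at the boundary points near $z_\nu$, use the paraboloidal approach to show that the $Z\overline Z$-coefficient of the rescaled defining functions vanishes in the limit (so the limit model has non-harmonic part of degree at least $4$), push the squeezing function to the limit to conclude the limit model is biholomorphic to $B^2(0,1)$, and derive a contradiction. The central new idea\,---\,that the approach condition \eqref{E:parabol}, together with $m\geq 2$, forces the $Z\overline{Z}$-coefficient at the rescaled centres to be $O(d_D(z_\nu))$ so that it is wiped out by the anisotropic dilation\,---\,is exactly the content of the paper's Lemma~\ref{L:conv_to_1_key}, and your back-of-envelope estimate reproduces that count correctly.

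Two points in your sketch, however, lean on facts that are not trivial and are not established in the sketch. First, the final rigidity step: you assert that a model $\{\re z_2 + \widehat P(z_1) < 0\}$ with $\widehat P$ subharmonic, non-harmonic and of degree $\geq 4$ is not biholomorphic to $B^2(0,1)$ ``because the automorphism group cannot act transitively.'' That failure of transitivity is itself a theorem, not an observation; the paper gets the contradiction cleanly by citing Oeljeklaus' classification (Result~\ref{R:oelj}), which says that a biholomorphism between such models preserves the degree of the non-harmonic part. Second, the step where you confine $F_\nu^{-1}$ to a compact subset of $D_\infty$ ``by Kobayashi-distance estimates in $D_\nu$ together with the completeness of $D_\infty$'' is the technically delicate part. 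Completeness of $D_\infty$ does not by itself control the Kobayashi balls of the pre-limit domains $D_\nu$; one needs a \emph{uniform-in-$\nu$} lower bound for the Kobayashi metric of the rescaled domains, which in the paper is supplied via the stability of negative plurisubharmonic exhaustions after rescaling (Lemma~\ref{L:varrhos}, whose proof rests on Diederich--Fornaess) fed into Sibony's estimate (Result~\ref{R:Sibony}). A further small caveat: your dilation rate $\delta_\nu^{-1/(2\ell)}$ with a fixed $\ell$ is not quite the Bedford--Pinchuk prescription\,---\,in the actual construction the tangential scale is chosen adaptively (per $\nu$, from the normalized coefficients $\sigma_{\nu,k}$ after the shears $B_\nu$) to guarantee that the limit polynomial is non-degenerate; this also has to be tracked when verifying that the shears do not reintroduce a quadratic term. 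None of these is a wrong turn, but each needs to be filled in for the argument to close.
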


\begin{remark}\label{R:BS}
In a recent paper by H.~\!Kim~\emph{et al.} in \emph{Bull. Korean Math. Soc.} {\bf 57} (2020), the authors
assert the conclusion of Theorem~\ref{T:conv_to_1} for a class of (pointed) pseudoconvex domains in $\Cn$
that, when $n=2$, is precisely the subclass of pointed domains $(D, p)$ considered in
Theorem~\ref{T:conv_to_1} that are globally pseudoconvex\,---\,and with no restriction on how
$\{z_{\nu}\}$ approaches $p$. There is a notable gap in their proof. We shall elaborate
upon  this in Remark~\ref{R:BS_discuss},
once we have introduced the requisite notation. The nature of the gap in the aforementioned
proof provides yet another motivation for Theorem~\ref{T:conv_to_1} and for the notion introduced
by Definition~\ref{D:parabol}.
\end{remark}

The existence of just one sequence $\{z_\nu\}$ with the properties stated in the theorem above is an
example of the ``crude properties'' of $s_D$ alluded to earlier in this section. Now, the specific way in which we
manage the difficulty discussed above\,--- \,i.e., concerning the sequence $\{D_{\nu}\}$ of rescaled
domains\,--- \,holds out the possibility of extending
Theorem~\ref{T:conv_to_1} to certain well-studied subclasses of domains with (locally) $h$-extendible
boundaries. The reader is directed to Question~\ref{Q:corank_1} and Question~\ref{Q:convex}
below, and to the discussions that precede them.

\subsection{A family of holomorphic homogeneous regular domains}\label{SS:HHR}
A lot of the recent literature on the squeezing function has focused on the phenomenon discussed in the
last subsection. There has, in a sense, been a reduced focus on the property that gives the
squeezing function much of its salience. To discuss this property, we need a definition.

\begin{definition}\label{D:HHR}
A bounded domain $D\subset \C^n$ is said to be \emph{holomorphic homogeneous regular} if
$\inf_{z\in D}s_D(z) > 0$.
\end{definition}

The term ``holomorphic homogeneous regular'' has its origins in \cite{liuSunYau:cmmsRsI04} by
Liu--Sun--Yau. The mere fact that $\inf_{z\in D}s_D(z) > 0$ endows $D$ with many pleasant properties that
make its complex geometry extremely well-behaved. For instance, the classical intrinsic metrics on $D$ are
mutually bi-Lipschitz equivalent. This is one of the many results in \cite{yeung:gdusf09} (which, in turn,
is motivated by the work \cite{liuSunYau:cmmsRsI04} in which the bi-Lipschitz equivalece of the Bergman,
Carath{\'e}odory and Kobayashi metrics on the Teichm{\"u}ller spaces is demonstrated).    
It would thus
be of interest to add to the list of domains that are holomorphic homogeneous regular (see 
\cite[Section~1]{kimZhang:uspbccdCn16} and its bibliography for the classes of domains that are known to have
this property). In Theorem~\ref{T:HHR}, we introduce a new family of domains that are holomorphic
homogeneous regular. But we first need a definition. (The word ``ball'' will
mean a ball with respect to the Euclidean norm on $\C^n$.)

\begin{definition}\label{D:well_cnvx}
Let $D\varsubsetneq \C^n$ be a domain and let $p\in \bdy{D}$. We say that $D$ is \emph{well-convexifiable near
$p$} if a $\bdy{D}$-open neighbourhood of $p$ is $\smoo^1$-smooth and there exist:
\begin{itemize}[leftmargin=24pt]
  \item a biholomorphic map $\Psi: D\to \C^n$ that extends continuously to $D\cup \{p\}$; and
  \item an open ball $B$ with centre $\Psi(p)$;
\end{itemize}
such that $\Psi(D)\cap B$ is convex,
$\bdy\Psi(D)\cap B$ is a $\smoo^1$-smooth strictly convex hypersurface (i.e., for each
$q\in \bdy\Psi(D)\cap B$, its tangent hyperplane at $q$ intersects $(\overline{\Psi(D)}\cap B)$ only at $q$),  
and the tangent hyperplane $H$ of $\bdy\Psi(D)$ at $\Psi(p)$ satisfies $H\cap \overline{\Psi(D)} = \{\Psi(p)\}$.
\end{definition}

\begin{theorem}\label{T:HHR}
Let $D$ be a bounded domain in $\C^n$ having $\smoo^1$-smooth boundary. Assume that for each
boundary point $p$ such that $\bdy{D}$ is \textbf{not} strongly Levi-pseudoconvex at $p$, 
$D$ is well-convexifiable near $p$. Then, $D$ is holomorphic homogeneous regular.
\end{theorem}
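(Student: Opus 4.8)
\emph{Strategy.} The plan is to show that $s_D$ is bounded below near each point of $\bdy D$ and then conclude by compactness. On a bounded domain $s_D$ is continuous and strictly positive \cite{dengGuanZhang:spsfbd12}, so it suffices to produce, for each $p\in\bdy D$, a neighbourhood $U_p$ of $p$ and a constant $c_p>0$ with $s_D\geq c_p$ on $D\cap U_p$: then a finite subcover $U_{p_1},\dots,U_{p_k}$ of the compact set $\bdy D$ leaves $K:=\overline D\setminus\bigcup_j U_{p_j}$ a compact subset of $D$, whence $\inf_D s_D\geq\min\{\inf_K s_D,\,c_{p_1},\dots,c_{p_k}\}>0$. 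So fix $p\in\bdy D$. If $\bdy D$ is strongly Levi-pseudoconvex at $p$, then $\bdy D$ is $\smoo^2$-smooth and strongly pseudoconvex throughout a neighbourhood of $p$, so $s_D(z)\to1$ as $D\ni z\to p$ by the Deng--Guan--Zhang boundary estimate \cite{dengGuanZhang:spsfbd12}, and we are done at $p$. Assume henceforth that $\bdy D$ is not strongly Levi-pseudoconvex at $p$, so that, by hypothesis, $D$ is well-convexifiable near $p$.

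Fix $\Psi$, $B$ as in Definition~\ref{D:well_cnvx} and set $\Omega:=\Psi(D)$, $p':=\Psi(p)$. Postcomposition with the biholomorphism $\Psi$ is a bijection between the holomorphic embeddings $D\hookrightarrow B^n(0,1)$ sending a given $z\in D$ to $0$ and such embeddings $\Omega\hookrightarrow B^n(0,1)$ sending $\Psi(z)$ to $0$, preserving images, so (reading the squeezing function of $\Omega$ off the same supremum) $s_D=s_\Omega\circ\Psi$; since $\Psi$ is continuous at $p$, every $z\to p$ in $D$ has $\Psi(z)\to p'$ in $\Omega$, and it therefore suffices to prove $\liminf_{\Omega\ni w\to p'}s_\Omega(w)>0$. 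Postcomposing $\Psi$ with a complex-affine automorphism of $\C^n$ (which affects nothing above), we may assume $p'=0$, that the tangent hyperplane of $\bdy\Omega$ at $0$ is $\{\re z_n=0\}$, that $\overline\Omega\subset\{\re z_n\geq0\}$ with $\overline\Omega\cap\{\re z_n=0\}=\{0\}$ (forcing $0\in\bdy\Omega$), and that $\Omega\cap B$ is convex with $\bdy\Omega\cap B$ a $\smoo^1$-smooth strictly convex hypersurface.

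Next, fix a ball $B'$ with $0\in B'$ and $\overline{B'}\subset B$, and set $\Omega':=\Omega\cap B'$. As an intersection of the convex sets $\Omega\cap B$ and $B'$, $\Omega'$ is a bounded convex domain with $0\in\bdy\Omega'$, and since $\overline{\Omega'}\subset\{\re z_n\geq0\}$ with $\overline{\Omega'}\cap\{\re z_n=0\}=\{0\}$ the map $\chi(z):=(1+z_n)^{-1}$ is a holomorphic peak function for $\Omega'$ at $0$ (its image on $\overline{\Omega'}$ lies in the disc $\{|\zeta-\tfrac12|\leq\tfrac12\}$, meeting $\{|\zeta|=1\}$ only at $\zeta=1$, i.e.\ only at $z=0$); the same $\chi$ is a holomorphic peak function for $\Omega$ at $0$. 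A bounded convex domain is holomorphic homogeneous regular (see \cite{kimZhang:uspbccdCn16}), so $c:=\inf_{\Omega'}s_{\Omega'}>0$. Since $\Omega$ and $\Omega'$ coincide on $B'$ and both carry a holomorphic peak function at $0$, the (by now standard) localization principle for the squeezing function at a boundary peak point gives $s_\Omega(w)-s_{\Omega'}(w)\to0$ as $\Omega\ni w\to0$; hence $\liminf_{\Omega\ni w\to0}s_\Omega(w)\geq c>0$, which by the previous paragraph yields $\liminf_{D\ni z\to p}s_D(z)>0$. With the compactness argument of the first paragraph, this completes the proof.

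\emph{Main obstacle.} The reduction and the strongly pseudoconvex case are routine; the heart of the matter is the well-convexifiable case, i.e.\ the local assertion that $s_D$ is bounded below near a boundary point which, after a global biholomorphism, becomes a $\smoo^1$-smooth strictly convex boundary point of a locally convex domain. Since that boundary is only $\smoo^1$, the Deng--Guan--Zhang asymptotics ``$s_D\to1$ at $\smoo^2$ strongly convex points'' are unavailable, and the two ingredients that replace them are (i) a holomorphic peak function at $p'$, produced from the strict convexity together with the hypothesis $H\cap\overline{\Psi(D)}=\{\Psi(p)\}$, and (ii) a localization of $s_D$ against the bounded convex model $\Omega\cap B'$. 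I expect step (ii) to be the delicate point: $\Omega=\Psi(D)$ need not be bounded, so the hypotheses of the localization lemma — peak functions on both domains, and the step extending near-optimal embeddings across the common neighbourhood of $p'$ — must be verified here rather than quoted blindly.
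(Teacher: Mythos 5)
Your reduction (split $\bdy D$ into the strongly Levi-pseudoconvex part and the well-convexifiable part, handle the former by a known ``$s_D\to1$'' result, and argue locally at a point of the latter) matches the paper's framework: the paper likewise argues by contradiction, produces a sequence $z_\nu\to p$ with $s_D(z_\nu)\to0$, rules out strong pseudoconvexity of $\bdy D$ at $p$ via Kim--Zhang's spherically extreme boundary point theorem, and is then reduced to the well-convexifiable case. Your use of $\Psi$, the normalization at $\Psi(p)$, and the appeal to Kim--Zhang's uniform squeezing property for bounded convex domains are all in the spirit of the paper's proof.

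The gap — which you yourself flag in your ``Main obstacle'' paragraph — is the step
\[
s_\Omega(w)-s_{\Omega'}(w)\longrightarrow0\qquad(\Omega\ni w\to 0),
\]
which you invoke as a ``(by now standard) localization principle for the squeezing function at a boundary peak point.'' There is no such general principle in the literature, and the direction you actually need is the hard one: given a near-optimal holomorphic embedding $F'\colon\Omega'\to B^n(0,1)$ with $F'(w)=0$ and a large ball in $F'(\Omega')$, one has to produce a comparably good embedding of \emph{all} of $\Omega$. Since $\Omega'\subsetneq\Omega$ and $F'$ is defined only on the smaller domain, a peak function at $0$ gives no mechanism for extending or replacing $F'$ with a global embedding of $\Omega$ without losing control of the inner radius. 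The known ``localization'' statements for $s_D$ near boundary points (Deng--Guan--Zhang, Kim--Zhang's spherically extreme points, Fornæss--Wold, Nikolov, Joo--Kim, etc.) all operate under strong pseudoconvexity or spherical extremality and give the specific conclusion $s_D\to1$; none of them yields the general comparison $s_\Omega\approx s_{\Omega\cap B'}$ at a mere $\smoo^1$ strictly convex peak point. So as written this is a genuine hole, not a routine verification.

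The paper sidesteps this entirely. Rather than comparing $s_\Omega$ with $s_{\Omega\cap B'}$, it runs the Kim--Zhang construction directly on $\Omega$: for $w_\nu=\Psi(z_\nu)$ near $\xi=\Psi(p)$, Proposition~\ref{P:HHR_key} produces an affine map $\Lambda_{\nu}$ (built from supporting hyperplanes at boundary points $s_j(\nu)$ near $\xi$), and the crucial Lemma~\ref{L:tangent} guarantees that these tangent hyperplanes $\tgt_{s_j(\nu)}(\bdy\Omega)$ separate the \emph{entire} domain $\Omega$, not merely $\Omega\cap B'$. Consequently the affine map $L_\nu\circ\Lambda_\nu$ sends all of $\Omega$ into a half-space corner, and a Cayley-type map $\Phi_\nu$ finishes the job of producing a global holomorphic embedding of $\Omega$ into $\D^n$ whose inner radius at $\Phi_\nu\circ L_\nu\circ\Lambda_\nu(w_\nu)=0$ is bounded below by a constant depending only on $n$. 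The local convexity near $\xi$ enters only in establishing the cross-polytope inclusion \eqref{E:acorn}; the global bound is secured by the separating-hyperplane property, which is precisely what your localization step was trying (and failing) to circumvent. To repair your proof you would need to either prove the localization comparison (which seems nontrivial and possibly false in the stated generality) or, as the paper does, replace the two-domain comparison by a direct construction of embeddings of $\Omega$.
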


\begin{remark}\label{R:strong_Levi}
For $D\Subset \C^n$ as above, if $\bdy{D}$ \textbf{is} strongly Levi-pseudoconvex
at a point $\xi\in \bdy{D}$, it is understood that for some $\bdy{D}$-open neighbourhood $\omega$
of $\xi$, $\omega$ is a strongly pseudoconvex hypersurface. However, it suffices in
Theorem~\ref{T:HHR} for $\bdy{D}$ to have $\smoo^1$ regularity globally.
\end{remark}

The notion of $D$ being well-convexifiable near a point $p\in \bdy{D}$ might be reminiscent of
the notion of a ``g.s.c. boundary point'' introduced by Deng \emph{et al.} in \cite{dengGuanZhang:psfgtbd16}.
However, we note certain significant points of contrast:
\begin{itemize}[leftmargin=24pt]
  \item Definition~\ref{D:well_cnvx} requires $\Psi(D)$ to be convex merely \textbf{close to} $\Psi(p)$.
  \item Observe the low regularity of $\bdy{D}$ in Definition~\ref{D:well_cnvx}.
  \item In case $\bdy\Psi(D)$ is $\smoo^2$-smooth around $\Psi(p)$, Definition~\ref{D:well_cnvx}
  does \textbf{not} require $\bdy\Psi(D)$ to be strongly convex at $\Psi(p)$.
\end{itemize}
The reader may ask: how restrictive are the conditions of Theorem~\ref{T:HHR}? The discussion of
Example~\ref{Ex:HHR} below will demonstrate how abundant such domains (i.e., those
satisfying the conditions of Theorem~\ref{T:HHR}) are, even in $\C^2$. Also, highlighting
the third bullet-point above: the domains discussed in Example~\ref{Ex:HHR} can even have
boundary points of infinite type.

\subsection{The decay of $\boldsymbol{s_D}$ approaching $\boldsymbol{\bdy{D}}$}\label{SS:decay}
We begin this subsection with an estimate on $s_D$ for a certain class of domains, and broaden our
analysis from there. In what follows, $K_X$ will denote the Kobayashi pseudodistance on a
connected complex manifold $X$.

\begin{theorem}\label{T:squee_0_I}
Let $\OM$ be a bounded domain in $\C^n$, $n\geq 2$. Let $K$ be a compact subset of $\OM$
such that $\OM\setminus K$ is connected. Write $D := \OM\setminus K$. Then
\begin{equation}\label{E:squee_0}
  s_D(z)\,\leq\,\tanh\big(K_{\OM}(z; \bdy{D}\cap K)\big) \quad \forall z\in D.
\end{equation}
\end{theorem}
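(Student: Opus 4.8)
The plan is to fix $z\in D$, an arbitrary holomorphic embedding $F\colon D\to B^n(0,1)$ with $F(z)=0$, and an $r>0$ with $B^n(0,r)\subset F(D)$, and to prove that $r\le\tanh\bigl(K_{\OM}(z;\bdy D\cap K)\bigr)$; taking the supremum over all such $F$ and $r$ then yields \eqref{E:squee_0}. The case $K=\emptyset$ is trivial (then $D=\OM$, $\bdy D\cap K=\emptyset$, and the right-hand side is $\tanh(+\infty)=1\ge s_D(z)$), so assume $K\neq\emptyset$; note then that $\bdy D\cap K\supseteq\bdy K\neq\emptyset$. The strategy is: (i) extend $F$ holomorphically across the compact hole $K$ to a map $\wt F$ on all of $\OM$; (ii) check that $\wt F$ still maps into $B^n(0,1)$ and apply the distance-decreasing property of the Kobayashi pseudodistance; and (iii) use the fact that $F$ is an \emph{embedding} to show that the ``hole values'' $\wt F(w)$, $w\in\bdy D\cap K$, lie outside $B^n(0,r)$.

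For step (i): since $n\ge 2$, $K\Subset\OM$ is compact, and $\OM\setminus K=D$ is connected, Hartogs' extension theorem (the Kugelsatz) applies to each component of $F$ and produces a holomorphic map $\wt F=(\wt F_1,\dots,\wt F_n)\colon\OM\to\C^n$ with $\wt F|_D=F$. For step (ii), I would invoke the maximum principle: $\|\wt F\|^2=\sum_i|\wt F_i|^2$ is plurisubharmonic on $\OM$, and, choosing an open set $\OM'$ with $K\subset\OM'\Subset\OM$, its maximum over the compact set $\ov{\OM'}$ is attained on $\bdy\OM'\subset D$, where $\|\wt F\|^2=\|F\|^2<1$; hence $\|\wt F\|^2<1$ on $K$, and since $\|\wt F\|^2<1$ on $D$ as well, $\wt F(\OM)\subset B^n(0,1)$. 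Now the distance-decreasing property applied to $\wt F\colon\OM\to B^n(0,1)$, together with $\wt F(z)=0$ and $K_{B^n(0,1)}(0,\zeta)=\tanh^{-1}\|\zeta\|$, gives
\[
  \|\wt F(w)\|\,\le\,\tanh\bigl(K_{\OM}(z,w)\bigr)\qquad\text{for all }w\in\OM.
\]

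Step (iii) is the point I expect to require the most care. Fix $w\in\bdy D\cap K$ and a sequence $\{a_\nu\}\subset D$ with $a_\nu\to w$; by continuity of $\wt F$, $F(a_\nu)=\wt F(a_\nu)\to\wt F(w)$. Suppose, for contradiction, that $\|\wt F(w)\|<r$. Then $\wt F(w)\in B^n(0,r)\subset F(D)$, so $\wt F(w)=F(b)$ for some $b\in D$. Since $F$ is a biholomorphism of $D$ onto the open set $F(D)$, a neighbourhood of $\wt F(w)$ lies in $F(D)$; thus $F(a_\nu)\in F(D)$ for all large $\nu$, and continuity of $F^{-1}$ on $F(D)$ forces $a_\nu=F^{-1}(F(a_\nu))\to F^{-1}(\wt F(w))=b\in D$, contradicting $a_\nu\to w\in\bdy D$. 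Hence $\|\wt F(w)\|\ge r$ for every $w\in\bdy D\cap K$.

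Combining the last two assertions, $r\le\|\wt F(w)\|\le\tanh\bigl(K_{\OM}(z,w)\bigr)$ for every $w\in\bdy D\cap K$; taking the infimum over such $w$ and using monotonicity of $\tanh$ yields $r\le\tanh\bigl(K_{\OM}(z;\bdy D\cap K)\bigr)$, and taking the supremum over admissible $F$ and $r$ gives $s_D(z)\le\tanh\bigl(K_{\OM}(z;\bdy D\cap K)\bigr)$. The only non-elementary ingredient is Hartogs' extension theorem, which is exactly where the hypotheses $n\ge 2$ and ``$\OM\setminus K$ connected'' are used; the remaining delicacy is the bookkeeping in step (iii) that keeps the hole values of $\wt F$ out of $B^n(0,r)$.
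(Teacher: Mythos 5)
Your argument is correct and follows essentially the same route as the paper: Hartogs extension across $K$, a maximum-principle argument to keep $\wt F$ valued in $B^n(0,1)$, the distance-decreasing property of the Kobayashi pseudodistance, and a properness/biholomorphism argument to show the ``hole values'' $\wt F(w)$, $w\in\bdy D\cap K$, lie outside the embedded ball. The only real difference is bookkeeping: you work with an arbitrary competitor $(F,r)$ and take the supremum at the end, which sidesteps the paper's appeal to the attainment of the supremum in the definition of $s_D$ (the cited result of Deng--Guan--Zhang), while the paper fixes an optimal $F$ and phrases the key claim as $\wt F(K\cap\bdy D)\cap F(D)=\varnothing$; these are cosmetically different but mathematically the same step.
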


It follows from \eqref{E:squee_0} that the domain $D$ above is not holomorphic
homogeneous regular. This is not surprising since, by the Hartogs phenomenon, $D$ is not pseudoconvex
(which is a necessary condition for holomorphic homogeneous regularity), but \eqref{E:squee_0}
gives us a quantitative estimate for $s_D$. Note also that when $K$ is such that $\bdy{K}$ has finitely
many connected components, and $K$ is so large that no connected component of
$\bdy{K}$ is a single point, then\,---\,in view of
\cite[Theorem~5.2]{dengGuanZhang:spsfbd12}\,---\,Theorem~\ref{T:squee_0_I} describes a
phenomenon that is \emph{impossible for $(\OM\setminus K)\subset \C$ with $K$ having the latter properties}. The proof of
Theorem~\ref{T:squee_0_I}, as we shall see, relies crucially on $D$ \textbf{not} being pseudoconvex.
This, together with the last observation, raises the following

\begin{question}\label{Q:squee_0}
Let $D$ be a bounded domain in $\C^n$, $n\geq 2$, such that for a set $\omega\subset \bdy{D}$
that is large in some appropriate sense $\lim_{D\ni z\to p}s_{D}(z) = 0$
for each $p\in \omega$. Then, does it follow that $D$ is not pseudoconvex?
\end{question}

The answer to this question will follow from a generalisation of Theorem~\ref{T:squee_0_I}. We must give a definition
before we can state such a result.

\begin{definition}
Let $D\varsubsetneq \C^n$, $n\geq 2$, be a domain and let $\coll$ be a non-empty subset of $\hol(D)$.
The \emph{$\coll$-inessential boundary of $D$}, denoted by $\iness{D}$, is defined as
\begin{align*}
  \iness{D}:=\big\{\xi\in \bdy{D} : \  &\exists U, \ \text{a connected open neighbourhood
  of $\xi$, and $V$, a connected component} \\
  &\text{of $D\cap U$ s.t., for each $f\in  \coll$, $\exists F_f\in \hol(U)$ satisfying  
  $\left. f\right|_{V} = \left. F_f\right|_{V}$}\big\}.
\end{align*}
\end{definition}
Now, recall that for any non-empty open set $D\subset \C^n$, $H^\infty(D)$ denotes the set of bounded
holomorphic functions on $D$. With these definitions, we can state our next result.

\begin{theorem}\label{T:squee_0_II}
Let $D$ be a bounded domain in $\C^n$, $n\geq 2$, and let $\coll$ be a subset of $\hol(D)$
that contains $H^\infty(D)$. Suppose $\iness{D}\neq \varnothing$. Then
\[
  \lim_{D\ni z\to p}s_D(z)\,=\,0 \; \; \text{for each $p\in \iness{D}$}.
\]
\end{theorem}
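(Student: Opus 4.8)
The plan is to reduce Theorem~\ref{T:squee_0_II} to an estimate of the same flavour as Theorem~\ref{T:squee_0_I}, exploiting the fact that a point $p\in\iness{D}$ is, by definition, ``invisible'' to $\coll\supset H^\infty(D)$. First I would fix $p\in\iness{D}$ and pick the data $U$ (a connected neighbourhood of $p$) and $V$ (a connected component of $D\cap U$) furnished by the definition, so that every $f\in H^\infty(D)$ extends holomorphically across $U$ when restricted to $V$. Shrinking $U$ to a ball, I may assume $V$ is connected and $p\in\bdy V$. The key geometric point I would establish is that, just as in the Hartogs situation of Theorem~\ref{T:squee_0_I}, there is a strictly larger domain $\widehat{D}\supset D$ (at least locally: $\widehat{D}$ agrees with $D$ away from $U$ and contains a full neighbourhood of $p$) together with a biholomorphic embedding, or at least a bounded holomorphic map, $D\hookrightarrow\widehat{D}$ into which every competitor for the squeezing function at $z\in V$ must factor. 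The cleanest route is to mimic the proof of Theorem~\ref{T:squee_0_I} verbatim with $K_\OM$ replaced by the Kobayashi (pseudo)distance on the relevant extended object, so that I get a bound of the form $s_D(z)\le\tanh\!\big(\dots\big)$ with the argument $\to 0$ as $z\to p$ within $V$.

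The mechanism making this work is the following. Let $F:D\to B^n(0,1)$ be any holomorphic embedding with $F(z)=0$, $z\in V$. Each coordinate of $F$ lies in $H^\infty(D)$, hence $F|_V$ extends to $\widehat F\in\hol(U)^n$; by the maximum principle $\widehat F(U)\subset\overline{B^n(0,1)}$, and in fact $\widehat F(U)\subset B^n(0,1)$ since $\widehat F$ is open unless constant. Thus $\widehat F$ maps the connected open set $U\cup V$ (or better, $U\cup D$ after gluing, using that $F$ and $\widehat F$ agree on $V$) holomorphically into $B^n(0,1)$, and $F(D)\subset B^n(0,1)$ is contained in the image of this larger map. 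Now I would use the Kobayashi-distance-decreasing property: for any $r>0$ with $B^n(0,r)\subset F(D)$, the point $0=\widehat F(z)$ is at Kobayashi distance $\ge\tanh^{-1}(r)$ (inside $B^n(0,1)$) from $\bdy B^n(0,1)\supset \bdy F(D)$, hence
\[
  \tanh^{-1}(r)\,\le\,K_{B^n(0,1)}\big(\widehat F(z);\bdy F(D)\big)\,\le\,K_{U\cup D}\big(z;\,\bdy D\cap U\big)
\]
(the relevant boundary portion being ``eaten'' by the extension), and therefore $s_D(z)\le\tanh\!\big(K_{U\cup D}(z;\bdy D\cap U)\big)$. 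Since $p$ is an interior point of $U$ while $z\to p$ through $D\cap U$, and the target set $\bdy D\cap U$ recedes from $p$ inside $U\cup D$, the Kobayashi distance on the right-hand side tends to $0$, giving $\lim_{D\ni z\to p}s_D(z)=0$. One has to be slightly careful that $z\to p$ through all of $D$, not merely through $V$; but for $z$ close enough to $p$ the only component of $D\cap U$ accumulating at $p$ that matters is $V$ (or one runs the same argument componentwise, as the extension hypothesis is stated per component).

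The main obstacle I anticipate is the gluing/extension bookkeeping: making precise the ``larger domain'' $\widehat D$ and checking that an arbitrary squeezing competitor $F$ genuinely extends as a map into the \emph{open} ball on a neighbourhood of $p$ that is uniform in $F$, and that the Kobayashi-distance estimate is applied on a fixed ambient domain independent of $F$. In the Hartogs case of Theorem~\ref{T:squee_0_I} this is automatic because $\OM$ itself is the ambient domain; here the ambient object is $D$ together with the germ of extension at $p$, so I would phrase it as: every $f\in\coll$ extends to a fixed $\hol(U)$ with $U$ depending only on $p$, whence the family $\{$coordinates of all competitors $F\}$ lies in a space on which evaluation at $p$ makes sense, and the resulting map $\widetilde F:\widehat D:=D\cup U\to\C^n$ is well-defined, holomorphic, bounded by $1$, and equals $F$ on $D$. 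Once $\widehat D$ and $\widetilde F$ are in hand the distance-decreasing argument is routine, exactly paralleling the proof of Theorem~\ref{T:squee_0_I}. A minor additional check is that $K_{\widehat D}(z;\bdy D\cap U)\to 0$ as $z\to p$: this follows because $p\in\widehat D^{\,\circ}$ and $\bdy D\cap U\subset\widehat D$, so one may use small analytic discs through $p$ to get an upper bound $\to 0$.
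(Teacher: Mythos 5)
Your overall strategy matches the paper's: extend the coordinate functions of a squeezing competitor $F$ past the inessential boundary point, then use the distance-decreasing property of the Kobayashi pseudodistance (on an enlarged domain into which the extension maps) to force $s_D(z)\to 0$. However, there are two concrete gaps that your proposal does not close.

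First, the ``larger domain'' $\widehat{D}:=D\cup U$, taken as a subset of $\C^n$, does not carry a well-defined extension $\widetilde{F}$. The hypothesis gives $\widehat{F}\in\hol(U)^n$ with $\widehat{F}|_V=F|_V$ on \emph{one} connected component $V$ of $D\cap U$; the definition of $\iness{D}$ gives no control over the other components of $D\cap U$, and there $\widehat{F}$ and $F$ need not agree. Consequently $F$ and $\widehat{F}$ cannot in general be glued to a single-valued map on $D\cup U\subset\C^n$. You gesture at this (``the ambient object is $D$ together with the germ of extension at $p$''), but the fix is not bookkeeping: you must leave $\C^n$ and glue $D$ and $U$ along $V$ as a Riemann domain spread over $\C^n$. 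The paper does exactly this, at full strength, by working inside the $\coll$-envelope of holomorphy $(\enve{D},\pi,D,\inc)$ (whose existence is Thullen's theorem) and setting $A:=\bdy\big(\inc(D)\big)\subset\enve{D}$, with $\pi(A)=\iness{D}$. Your argument would have to be re-expressed there before the distance-decreasing step is even well-posed.

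Second, the assertion ``by the maximum principle $\widehat{F}(U)\subset\overline{B^n(0,1)}$'' is false as stated. You know $|\widehat{F}|\le 1$ only on $\overline{V}\cap U$; the maximum principle on $U$ bounds $|\widehat{F}|$ by its boundary values on $\bdy U$, about which you know nothing, so no bound on all of $U$ follows. (Indeed, boundedness is precisely what can fail when passing to the envelope of holomorphy of $H^\infty$.) The paper sidesteps this: using only the continuity of $\wt{F}$ and $|\wt{F}|\le 1$ on $\overline{\inc(D)}\supset A$, it selects, for any $\eps>0$, an open $\OM\supset A$ with $\wt{F}(\OM)\subset B^n(0,1+\eps)$, rescales $\Phi_\eps:=(1+\eps)^{-1}\wt{F}|_\OM$, and proves $s_D(z)\le(1+\eps)K_\OM(\inc(z),A)$; letting $\inc(z)\to\xi\in A$ (possible by properness of $\inc$, after passing to a subsequence) gives the result. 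Incidentally, the step relating $\bdy F(D)$ to the extended boundary image (your middle inequality) is not free either: one needs the analogue of the Claim in the proof of Theorem~\ref{T:squee_0_I} (properness of the embedding $F$) to ensure $\wt{F}(A)\cap F(D)=\varnothing$, which your proposal leaves implicit.
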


The corollary below provides an answer (in the negative) to Question~\ref{Q:squee_0}.
We must specify a sense in which a subset of $\bdy{D}$\,---\,$D$ as in
Question~\ref{Q:squee_0}\,---\,is large. To this end: recall that if $A$ is a nowhere dense subset of $\C^n$ and
$q\in A$, then we say that $A$ is \emph{locally separating at $q$} if there exists an open ball $B$ with centre $q$
such that $B\setminus A$ has more than one connected component.  

\begin{corollary}\label{C:answer}
For each $n\in \Z_+\setminus\{1\}$, there exists a bounded pseudoconvex domain $D_n\subset \C^n$ that
admits a non-empty $\bdy{D_n}$-open set $\omega_n\subset \bdy{D_n}$ that is locally separating at 
some point in it and such that
$\lim_{D\ni z\to p}s_{D_n}(z) = 0$ for each $p\in \omega_n$.
\end{corollary}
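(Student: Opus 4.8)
The plan is to deduce Corollary~\ref{C:answer} from Theorem~\ref{T:squee_0_II} by constructing, for each $n\geq 2$, a bounded pseudoconvex domain $D_n\subset \C^n$ whose $H^\infty(D_n)$-inessential boundary $\iness{D_n}$ (with $\coll = H^\infty(D_n)$) contains a $\bdy{D_n}$-open set that is locally separating at one of its points. The key realisation is that $\iness{D_n}$ is precisely the locus where bounded holomorphic functions extend holomorphically across the boundary; so we want a pseudoconvex domain that nevertheless has a large chunk of boundary across which all bounded holomorphic functions extend. The natural source of such phenomena is a ``slit'' or ``notch'': take a nice bounded pseudoconvex domain $\OM_n\Subset \C^n$ (e.g. a ball) and remove from it a real-codimension-$\geq 2$ closed piece, or rather cut into it so that part of $\bdy{D_n}$ is a piece of a complex hypersurface or a lower-dimensional set sitting in the interior of $\ov{\OM_n}$.

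Concretely, I would proceed as follows. First, fix $n\geq 2$ and let $\OM_n := B^n(0,1)$. Choose a real-analytic (or even affine) complex hypersurface piece, say $\Sigma := \{z\in \C^n : z_n = 0\}$, and consider removing a relatively closed ``half-disc-like'' subset of $\Sigma\cap \OM_n$ from $\OM_n$ to create a domain $D_n$ with a two-sided slit. More precisely, let $A_n := \{z\in \ov{\OM_n} : z_n = 0,\ \re(z_1)\leq 0\}$ and set $D_n := \OM_n\setminus A_n$. Then $D_n$ is a bounded domain (connected, since $A_n$ has real codimension $3$ in $\C^n$ for $n\geq 2$, hence removing it does not disconnect); one checks $D_n$ is pseudoconvex because $A_n$ lies in a complex hypersurface, so $-\log\met(z;A_n)$-type barriers, or more simply the fact that $\C^n\setminus\Sigma$ is pseudoconvex and $\OM_n$ is pseudoconvex and $D_n$ is a component of their intersection minus a thin set, gives pseudoconvexity; alternatively invoke that the complement of a pure-codimension-one analytic set is pseudoconvex (Oka) and intersect with the ball. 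Second, I would identify the $\bdy{D_n}$-open set $\omega_n$: it is the ``two copies'' of the smooth part of the slit, i.e. the set of boundary points of $D_n$ lying over the relative interior of $A_n$ in $\Sigma$ (points where $z_n=0$, $\re(z_1)<0$, $|z|<1$), approached from the two sides $\{z_n = 0^+\}$ and $\{z_n = 0^-\}$. Across each such point a bounded holomorphic function on the local one-sided component extends, by the classical removable-singularity theorem for bounded holomorphic functions across a complex hypersurface (Riemann extension / the fact that a complex hypersurface is an $H^\infty$-negligible set): locally near such a point $D_n$ looks like $\Delta^{n-1}\times(\Delta\setminus\{0\})$ up to the relevant coordinate slice, and a bounded holomorphic function on the slit neighbourhood extends across the removed hypersurface piece. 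Hence $\omega_n\subset \iness{D_n}$ for $\coll = H^\infty(D_n)$. Third, $\omega_n$ is $\bdy{D_n}$-open and is locally separating at any of its points $q$: a small Euclidean ball $B$ about $q$ meets $D_n$ in two components (``above'' and ``below'' the slit), so $B\setminus \omega_n$ — indeed $B\setminus A_n$ — has two components, and $\omega_n$ (lying in $\Sigma$) is nowhere dense. Finally, apply Theorem~\ref{T:squee_0_II} with $\coll = H^\infty(D_n)$ to conclude $\lim_{D_n\ni z\to p}s_{D_n}(z) = 0$ for every $p\in \omega_n$, which is exactly the assertion of the corollary.

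The main obstacle I anticipate is the careful verification that $\omega_n$ is genuinely a $\bdy{D_n}$-\emph{open} subset of $\bdy{D_n}$ and that it is honestly locally separating in the sense defined — i.e. that near a slit point the domain really does split into two components of $D_n\cap U$ and that these are the correct ``connected component $V$'' appearing in the definition of $\iness{D_n}$ — together with producing the local holomorphic extension $F_f$ on a full (connected) neighbourhood $U$ of the slit point rather than just on one side. This forces the removed set $A_n$ to be chosen with enough care that its relative interior in $\Sigma$ is a smooth two-sided hypersurface-with-boundary inside $\OM_n$, and that its boundary (the edge of the slit, $\{z_n=0,\ \re z_1 = 0\}\cap \ov{\OM_n}$, together with $\bdy A_n\cap \bdy\OM_n$) is pushed away from the points of $\omega_n$ we use; choosing $A_n$ real-analytic (or affine-linear as above) makes the local model $\Delta^{n-1}\times(\Delta\setminus\{0\})$ exact and the Riemann-type extension immediate. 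A secondary point requiring a line or two is pseudoconvexity of $D_n$: the cleanest route is to note $\OM_n\setminus\Sigma = B^n(0,1)\setminus\{z_n=0\}$ is pseudoconvex (complement of an analytic hypersurface in a pseudoconvex domain), and then argue that adjoining back the relative-interior-of-the-complementary-piece of $\Sigma$ — i.e. the part $\{z_n=0,\ \re z_1>0\}$ — preserves pseudoconvexity because that adjoined set is a relatively closed piece of a complex hypersurface meeting $D_n$'s closure along a Levi-flat hypersurface; if this turns out delicate, one can instead take $D_n$ to be a small pseudoconvex perturbation or simply verify the Kontinuitätssatz directly on this very explicit model. I would keep the construction maximally explicit (the affine slit above) precisely so that every one of these checks is a short, essentially one-dimensional computation rather than a soft argument.
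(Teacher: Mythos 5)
Your broad strategy is the same as the paper's: apply Theorem~\ref{T:squee_0_II} with $\coll = H^\infty(D_n)$ to a pseudoconvex domain with non-empty $\coll$-inessential boundary. However, the specific construction you propose fails on two counts, and the failures are related.

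\textbf{First, $D_n := B^n(0,1)\setminus A_n$ is not pseudoconvex.} A half-hypersurface $A_n = \{z_n=0,\ \re z_1\le 0\}\cap\ov{B^n(0,1)}$ is a removable singularity for \emph{all} holomorphic functions, not merely bounded ones: for $f\in\hol(D_n)$, expand $f(z',z_n)=\sum_k a_k(z')z_n^k$ as a Laurent series around the slit; the coefficients $a_k$ are holomorphic in $z'$ on a full polydisc in $\C^{n-1}$ (the Cauchy integrals over $\{|z_n|=r\}$ make sense there), and $a_k\equiv 0$ for $k<0$ on $\{\re z_1>0\}$ because $f$ is holomorphic across $\{z_n=0\}$ there; by analytic continuation $a_k\equiv 0$ for $k<0$ everywhere. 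So $f$ extends across the entire relative interior of $A_n$, hence $D_n$ is not a domain of holomorphy. Equivalently, the Kontinuit\"{a}tssatz is violated by the explicit family $\phi_t(\zeta)=(1/2-t,0,\dots,0,\eps\zeta)$, $t\in[0,1/2]$: for $t<1/2$ one has $\phi_t(\ov\D)\subset D_n$, the boundaries $\phi_t(\bdy\D)$ stay in a fixed compact subset of $D_n$, yet $\phi_{1/2}(0)=0\in A_n$. Your fall-back of ``just verify the Kontinuit\"{a}tssatz directly'' therefore kills the construction rather than rescuing it.

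\textbf{Second, your $\omega_n$ is not locally separating.} The set $\omega_n$ you propose sits inside the complex hypersurface $\{z_n=0\}$, which has real codimension $2$ in $\C^n$. Removing a codimension-$2$ set from an open ball leaves it connected; there is no ``above'' and ``below'' in real dimension $2n\geq 4$, any more than a wire disconnects $\R^3$. So $B\setminus\omega_n$ (indeed $B\setminus A_n$, and $B\cap D_n$) has one component, not two, and the ``locally separating'' hypothesis in Corollary~\ref{C:answer}\,---\,which is precisely what is designed to rule out $\omega_n$ being a thin interior slice\,---\,fails. This is not a cosmetic slip: these two failures are two faces of the same dimensional mismatch. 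To have a locally separating boundary piece, $\omega_n$ must be a genuine real-codimension-$1$ set, but real hypersurfaces are not removable singularities for bounded holomorphic functions via any Riemann-type theorem, so removable-singularity considerations of the sort you invoke cannot supply the extension property.

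The paper's construction (Example~\ref{Ex:squee_0}) resolves exactly this tension by using Sibony's Hartogs-type domain $D_n=\{(z,w)\in\D^{n-1}\times\C:|w|<e^{-V(z)}\}$ with $V$ a bounded, strictly positive plurisubharmonic function built from $\sum\lambda_\nu\log\|z-a_\nu\|$. There, $\omega_n=\{|w|=e^{-V(z)}\}$ \emph{is} a real-codimension-$1$ piece of $\bdy D_n$ (hence locally separating where $V$ is continuous, e.g.\ near $z=0$), $D_n$ is pseudoconvex as a Hartogs domain over a plurisubharmonic $V$, and the extension of $H^\infty(D_n)$ across $\omega_n$ comes not from any Riemann removable-singularity theorem but from the subtle fact\,---\,this is the point of Sibony's example, with Lemma~\ref{L:sup-norm} replacing the Fatou-type argument in dimension~$\geq 3$\,---\,that the Laurent coefficients $h_k$ of any $g\in H^\infty(D_n)$ are uniformly bounded by $1$ on all of $\D^{n-1}$. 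The phenomenon you need (a pseudoconvex domain that is not an $H^\infty$-domain of holomorphy, with the non-essential boundary piece a full real hypersurface) cannot be obtained by excising a piece of a complex-analytic set; some input at the level of Sibony's construction seems unavoidable.
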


Example~\ref{Ex:squee_0} below will serve as the proof of Corollary~\ref{C:answer}. Note that 
the condition that $\omega_n$ must be locally separating at some point in it
rules out, for instance, $\omega_n$ being a complex subvariety of some domain
$\OM$ containing $D_n$. Thus, in a certain sense, $\omega_n$ is large.  
\medskip

\section{Examples}\label{S:examples}

We now present the examples mentioned in Section~\ref{S:intro}. Certain notations
(which have also been used without clarification in Section~\ref{S:intro}) recur
in their presentation. We first explain these notations.

\subsection{Common notations}\label{SS:notation} We fix the following notation used in this section (which
will recur elsewhere in this paper):
\begin{enumerate}[leftmargin=27pt]
  \item $\|\bcdot\|$ will denote the Euclidean norm on $\C^n$ or $\R^N$. Unless specifically
  mentioned, distances will be understood to mean the Euclidean distance. Thus, for two points
  $a$ and $b$ in Euclidean space, we shall write ${\rm dist}(a, b)$ as $\|a-b\|$.
  \vspace{0.65mm}
  
  \item The open ball in $\C^n$ with centre $a$ and radius $r$ will be denoted by $B^n(a, r)$. However,
  we shall write $B^1(a, r)$ (i.e., $a\in \C$) as $D(a, r)$.
\end{enumerate}
\smallskip

\subsection{The two examples} 
To present our first example, we shall need the simple result:

\begin{lemma}\label{L:radial_subh}
Let $\varphi: [0, R)\to \R$, $0<R\leq +\infty$, be a function of class $\smoo^2\big([0, R)\big)$,
and  suppose $\varphi^{(n)}(x) = o(x^{2-n})$ as $x\to 0^+$ for $n = 1, 2$. Define the function
$\phi : D(0, R)\to \R$ by $\phi(z) := \varphi(|z|)$ for each $z\in D(0, R)$. 
If $\varphi$ satisfies the inequality
\[
  \varphi^{\prime\prime}(x) + x^{-1}\varphi^\prime(x)\,\geq\,0 \; \; \forall x\in (0, R),
\]
then $\phi$ is subharmonic.
\end{lemma}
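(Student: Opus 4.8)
The plan is to verify subharmonicity by computing the Laplacian of $\phi$ on the punctured disc $D(0,R)\setminus\{0\}$ and then handling the origin separately, since the hypotheses on the decay of $\varphi^{(n)}$ are precisely what is needed to control the behaviour at $0$. First I would recall that for a radial function $\phi(z)=\varphi(|z|)$ of class $\smoo^2$ away from the origin, the Laplacian in polar coordinates is
\[
  \laplc \phi(z)\,=\,\varphi^{\prime\prime}(|z|) + |z|^{-1}\varphi^{\prime}(|z|),
\]
so the stated inequality $\varphi^{\prime\prime}(x)+x^{-1}\varphi^{\prime}(x)\geq 0$ for $x\in(0,R)$ gives $\laplc\phi\geq 0$, hence $\phi$ is subharmonic on $D(0,R)\setminus\{0\}$. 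In particular $\phi$ is continuous on all of $D(0,R)$ (the hypothesis forces $\varphi$ to be continuous at $0$, indeed $\smoo^2$ up to $0$ by assumption), and upper semicontinuous there.

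The remaining issue is subharmonicity \emph{at} the origin, which does not follow formally from the punctured-disc computation — one must rule out that $\{0\}$ is a genuine obstruction. Here I would invoke a removable-singularity principle for subharmonic functions: if $\phi$ is subharmonic on $D(0,R)\setminus\{0\}$, continuous on $D(0,R)$, and not identically $-\infty$, then $\phi$ is subharmonic on all of $D(0,R)$ provided $\phi$ does not tend to $-\infty$ too fast at $0$ — and in fact continuity at $0$ alone suffices because a single point has zero logarithmic capacity. Concretely, the sub-mean-value inequality $\phi(0)\leq \frac{1}{2\pi}\int_0^{2\pi}\phi(re^{i\tht})\,d\tht$ for small $r$ follows since the right-hand side equals $\frac{1}{2\pi}\int_0^{2\pi}\varphi(r)\,d\tht = \varphi(r)$, and $\varphi(r)\to\varphi(0)=\phi(0)$ as $r\to 0^+$; moreover $r\mapsto \varphi(r)$ is nondecreasing on $(0,R)$ because its derivative is $\varphi^{\prime}(r)$ and the differential inequality together with $\varphi^{\prime}(x)=o(x)\to 0$ as $x\to 0^+$ forces $x\varphi^{\prime}(x)$ to be nondecreasing (its derivative is $x(\varphi^{\prime\prime}+x^{-1}\varphi^{\prime})\geq 0$) and hence nonnegative, so $\varphi^{\prime}\geq 0$. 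Thus $\varphi(r)\geq\varphi(0)$ for all $r\in[0,R)$, giving the sub-mean-value inequality at $0$ with room to spare; combined with subharmonicity on the punctured disc and upper semicontinuity, this yields subharmonicity of $\phi$ on $D(0,R)$.

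The main obstacle I anticipate is being careful about the exact role of the decay hypotheses $\varphi^{(n)}(x)=o(x^{2-n})$ as $x\to 0^+$: for $n=2$ this says $\varphi^{\prime\prime}(x)=o(1)$, i.e. $\varphi^{\prime\prime}$ is bounded near $0$ (automatic from $\smoo^2$ up to $0$, so this is really saying $\varphi^{\prime\prime}(0)$ can be taken $=0$ after the $o(1)$ reading — I would double-check whether the intended reading is $\varphi^{\prime\prime}(x)\to 0$), and for $n=1$ it says $\varphi^{\prime}(x)=o(x)$, which is what makes $x^{-1}\varphi^{\prime}(x)$ extend continuously (with value $0$) across $0$ and legitimises writing $\laplc\phi=\varphi^{\prime\prime}+x^{-1}\varphi^{\prime}$ with a bounded right-hand side near $0$. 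I would make sure the write-up states clearly that these conditions guarantee the polar-coordinate Laplacian formula is valid with a locally bounded (indeed continuous, after filling in the value $\varphi^{\prime\prime}(0)$ at $0$) integrand, so that no pathology hides at the origin; the rest is the standard removable-point argument sketched above.
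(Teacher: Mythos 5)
Your proof is correct and follows the same polar-Laplacian computation that the paper points to as its (unwritten, ``elementary'') argument. Your careful treatment at the origin — showing that $x\mapsto x\varphi'(x)$ is nondecreasing with $\lim_{x\to 0^+}x\varphi'(x)=0$, hence $\varphi'\geq 0$ and $\varphi$ is nondecreasing, so the sub-mean-value inequality at $0$ holds outright — supplies exactly the detail the paper leaves implicit, and is sound (indeed continuity of $\varphi'$ at $0$ already gives $x\varphi'(x)\to 0$, so the stated decay hypotheses are more than this step needs; their real role is to make $\laplc\phi$ extend continuously across the origin).
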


The proof of the above lemma is an elementary calculation. It follows from the fact that, writing
$\C\setminus \{0\}\ni z = re^{i\tht}$, the Laplacian in polar coordinates has the expression:
\[
  \laplc\,:=\,\pd{{}}{r^2}{2} + \frac{1}{r}\pd{{}}{r}{{}}
  		+ \frac{1}{r^2}\pd{{}}{\tht^2}{2}.
\]

\begin{example}\label{Ex:HHR}
An example of a family of domains $D\Subset \C^2$ that are not convex and such that:
\begin{itemize}[leftmargin=24pt]
  \item each $D$ has boundary points around which $\bdy{D}$ is not strongly Levi-pseudoconvex; and
  \item each $D$ has all the properties stated in Theorem~\ref{T:HHR}.
\end{itemize}
\end{example}

\noindent{Fix a small number $\eps>0$ and let $\sigma: [0, 1+\eps)\to \R$ be a function of
class $\smoo^\infty\big([0, 1+\eps)\big)$ having the following properties:
\begin{enumerate}[leftmargin=27pt]
  \item\label{I:decay} $\sigma(0) = 0$, $\sigma^\prime(x) = o(x)$, and $\sigma^{\prime\prime}(x) = o(x)$ 
  as $x\to 0^+$.
  \item\label{I:one} $\sigma(1) = 1$.
  \item\label{I:incr} $\sigma^\prime(x) > 0$ for every $x\in (0, 1+\eps)$
  \item $\sigma$ satisfies the inequality
  \begin{equation}\label{E:strictly_subh}
    x\sigma^{\prime\prime}(x) + \sigma^\prime(x)\,>\,0 \; \; \forall x\in (0, 1+\eps).
  \end{equation}
  \item\label{I:cvx_ncvx} $\sigma^{\prime\prime}(x) > 0$ for each $x\in (0, \eps)$ but the
  function $[0, \sqrt{1+\eps})\ni x\longmapsto \sigma(x^2)$ is \textbf{not} convex on the entire
  interval $[0, \sqrt{1+\eps})$.
\end{enumerate}
Now, let $P\in \C[Z]$ be a non-constant polynomial such that the domain
\[
 D_{P,\,\sigma}\,:=\,\{(Z,W)\in \C^2 : |W+P(Z)|^2 + \sigma(|Z|^2) < 1\}
\]
is non-convex. We will show that each domain in the family
\[
 \{D_{P,\,\sigma} : \text{$\sigma$ and $P$ satisfy all the conditions above}\}
\]
is non-convex, has all the properties stated in Theorem~\ref{T:HHR}, and
each such domain has boundary points around which its boundary is not strongly Levi-pseudoconvex.
To this end, let us \textbf{fix} a $\sigma$ and $P$ with the properties stated above. For simplicity
of notation, let us now refer to this $D_{P,\,\sigma}$ as $D$. Define
\[
  \rho(Z,W)\,:=\,|W+P(Z)|^2 + \sigma(|Z|^2) - 1 \; \; \forall (Z,W) \in D(0,\sqrt{1+\eps})\times \C.
\]}
\smallskip

That $D$ is non-convex follows from its definition (i.e., by the choice of $P$). We compute:
\[
  \partial\rho(Z,W)\,=\,\big(P^\prime(Z)(\overline{W}+\overline{P(Z)})+\overline{Z}\sigma^\prime(|Z|^2)\big)\,dZ
  + (\overline{W}+\overline{P(Z)})\,dW.
\]
Observe that by the conditions on $\sigma$, $D\subset D(0,1)\times\C\subset {\sf dom}(\rho)$.
Now, if $(Z,W)\in \bdy{D}$ and $W+P(Z)=0$, then by the conditions \eqref{I:one} and \eqref{I:incr},
necessarily $|Z|=1$. From this,
we get that $\partial\rho(Z,W)\neq 0$ for each $(Z,W)\in \bdy{D}$. These facts imply that $\rho$ is a defining
function of $D$ and that $\bdy{D}$ is $\smoo^2$-smooth. 
\smallskip

Now consider the biholomorphism $\Psi : \C^2\to \C^2$ given by
\[
  \Psi(Z,W)\,:=\,(Z, W+P(Z))\,=:\,(z,w).
\]
If we write $\OM := \Psi(D)$, then, clearly, $\varrho$ defined by
\[
  \varrho(z,w)\,:=\,|w|^2 + \sigma(|z|^2) - 1 \; \; \forall (z,w) \in D(0,\sqrt{1+\eps})\times \C,
\]
is a defining function of $\OM$. We will use this fact and the conclusions of the previous paragraph to
determine the set of points in $\bdy{D}$ around which $\bdy{D}$ is not strongly Levi-pseudoconvex.
To this end, we compute the complex Hessian of $\varrho$:
\begin{equation}\label{E:comp_Hess}
  \cHess(\varrho)(z,w)\,=\,\begin{bmatrix}
  						\,|z|^2\sigma^{\prime\prime}(|z|^2) + \sigma^\prime(|z|^2)	& 0\, \\
  						0													& 1\,
  					    \end{bmatrix} \quad \forall (z,w) \in D(0,\sqrt{1+\eps})\times \C.
\end{equation}
If we take $\varphi(x) := \sigma(x^2)$, $x\in [0,\sqrt{1+\eps})$, in Lemma~\ref{L:radial_subh}, then
combining the conclusion of Lemma~\ref{L:radial_subh} with \eqref{E:strictly_subh}, we conclude
that
\begin{itemize}[leftmargin=24pt]
  \item $\varrho$ is a plurisubharmonic function; and
  \item $\bdy{\OM}$ is strongly Levi-pseudoconvex around each point $(z,w)\in \bdy{\OM}$ such
  that $z\neq 0$.
\end{itemize}
Next, observe that the complex line contained in $T_{(0,1)}(\bdy{\OM})$ is $\C\times\{0\}$. Then,
by \eqref{E:comp_Hess} and the condition \eqref{I:decay},
$\bdy{\OM}$ is weakly pseudoconvex at the point $(0,1)\in \bdy{\OM}$. Finally, since the
maps $\tau_{\tht} : (z,w)\longmapsto (z, e^{i\tht}w)$, for each $\tht\in \R$, are automorphisms of
$\OM$, we conclude that $\{(0, w)\in \C^2 : |w| = 1\}$ is the set of weakly pseudoconvex points
of $\bdy{\OM}$. From this, we conclude:
\begin{align}
  &\text{the set $\excp:=\Psi^{-1}\big(\{(0, w)\in \C^2 : |w| = 1\}\big)$ is the set of weakly pseudoconvex
  points} \notag \\
  &\text{in $\bdy{D}$, and $\bdy{D}$ is strongly Levi-pseudoconvex around each point in $\bdy{D}\setminus\excp$.}
  \label{E:bdy_geom}
\end{align}

In view of \eqref{E:bdy_geom}, it suffices to show that $D$ is well-convexifiable near each $p\in \excp$ to complete the
discussion of Example~\ref{Ex:HHR}. For each $p\in \excp$, we will see that the $\Psi$ introduced above
will play the role of the biholomorphism $\Psi$ appearing in Definition~\ref{D:well_cnvx}. To do so,
we shall show that $D$ is well-convexifiable near the point $p^* := \big(0, -(1+P(0))\big)\in \bdy{D}$. If we show this,
then (note that $\Psi(p^*) = (0, -1)$), since the unitaries $\tau_{\tht}$ are \textbf{linear} automorphisms
of $\OM$, we would be done. Let us write
\[
  z = x+iy \quad\text{and} \quad w = u+iv.
\]
We observe that $\bdy{\OM}\cap \{(z, w)\in \C^2 : u<0\}$ is given by the equation $u = g(z,v)$, where
\begin{multline*}
  g(z, v)\,:=\, -\sqrt{1-v^2 - \sigma(|z|^2)}, \\
  (z,v)\in \left\{(s+it, \eta)\in D(0,1)\times\R : |\eta| < \sqrt{1-\sigma(s^2 + t^2)}\right\}.
\end{multline*}
Thus,
\[
 g(z,v)\,=\,-1+\tfrac{1}{2}\big(v^2+\sigma(|z|^2)\big)
 + O\big(\big(v^2+\sigma(|z|^2)\big)^2\big) \; \; \text{as $\|(x,y,v)\|\to 0$}.
\]
Since $g$ is $\smoo^2$-smooth, it follows from the above and from writing out the real Hessian of $g$ 
that\,---\,in view of the first part of the condition \eqref{I:cvx_ncvx}\,---\,there is a small open ball 
$\mathfrak{B}\subset \R^3$ with centre $0$ such that $\left. g\right|_{\mathfrak{B}}$ is a convex function.
Now, the (extrinsic) tangent hyperplane of $\bdy{\OM}$ at $(0,-1)$, call it $H$, is
\[
  H\,=\,(0,-1)+\{(x+iy, u+iv)\in \C^2 : u = 0\}.
\]
By the expression for $g$ and the condition \eqref{I:incr}, it is immediate that
$H\cap\overline{\OM} = \{(0, -1)\}$. From the last three statements, we see that   
$D$ is well-convexifiable near the point $p^*$. In view of the remarks at the beginning of
this paragraph, we conclude that $D$ has all the properties stated in Theorem~\ref{T:HHR},
which concludes the discussion of Example~\ref{Ex:HHR}.\hfill $\blacktriangleleft$
\medskip

In support of certain remarks made in Subsection~\ref{SS:HHR}, we also note that:
\begin{itemize}[leftmargin=24pt]
  \item In view of the condition \eqref{I:cvx_ncvx} above, if a domain $D$ satisfies all
  the properties stated in Theorem~\ref{T:HHR}, $p$ is a boundary point
  such that $\bdy{D}$ is \textbf{not} strongly Levi-pseudoconvex around $p$,
  and $\Psi$ is the biholomorphism associated with $p$ appearing in Definition~\ref{D:well_cnvx},
  then the conditions on $D$ do \textbf{not} in general imply that $\Psi(D)$ must be
  globally convex.
  \item The conditions \eqref{I:decay}--\eqref{I:cvx_ncvx} above on $\sigma$ are such that they admit examples of
  $\sigma$ for which the weakly pseudoconvex points of $\bdy{D_{P,\,\sigma}}$ are
  infinite-type points.
\end{itemize}

In order to present the next example, we need a preliminary result. In what follows, we
shall abbreviate the unit disc $D(0,1)$ as $\D$.

\begin{lemma}\label{L:sup-norm}
Fix $n\in \Z_+$. There exists a sequence of points $\{a_{\nu}\}\subset \D^n$ that is discrete
in $\D^n$ and such that if
$f\in H^{\infty}(D^n)$ and $|f(a_{\nu})|\leq 1$ for each $\nu\in \N$, then
$\sup_{z\in \D^n}|f(z)|\leq 1$.
\end{lemma}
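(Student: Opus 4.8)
The plan is to build the sequence as an $n$-fold product of a single one-variable sequence and to reduce the $n$-dimensional assertion to the one-dimensional one by peeling off variables. So the substance lies in the case $n=1$: one needs a sequence, discrete in $\D$, that ``dominates the sup-norm'' of $H^\infty(\D)$, and the natural way to produce such a sequence is to arrange that every boundary point be a \emph{nontangential} cluster point of it.

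For $n=1$, for each integer $m\geq 2$ fix finitely many points $\zeta_{m,1},\dots,\zeta_{m,N_m}\in\bdy\D$ forming a $\tfrac1m$-net in arc length, and let $\{b_k\}_{k\geq 1}$ enumerate the countable set $\{(1-\tfrac1m)\zeta_{m,i}:m\geq 2,\ 1\leq i\leq N_m\}\subset\D$. This set is discrete in $\D$, since a closed ball $\{|z|\leq\rho\}$ with $\rho<1$ contains only those of its points for which $1-\tfrac1m\leq\rho$, and there are finitely many such. A one-line estimate shows that for each $\zeta\in\bdy\D$ there is a subsequence $b_k\to\zeta$ lying entirely in the region $\{z\in\D:|z-\zeta|\leq\tfrac32(1-|z|)\}$, so every $\zeta\in\bdy\D$ is a nontangential cluster point of $\{b_k\}$ with a single fixed aperture. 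Now suppose $f\in H^\infty(\D)$ and $|f(b_k)|\leq 1$ for all $k$. By Fatou's theorem, $f$ admits nontangential boundary values $f^{*}$ almost everywhere on $\bdy\D$, the nontangential limit of $f$ at such a point coincides with the limit of $f$ along any sequence entering a fixed Stolz region at that point, and $\|f\|_{H^\infty(\D)}$ equals the essential supremum of $|f^{*}|$. Applying this at almost every $\zeta\in\bdy\D$ along the subsequence above yields $|f^{*}(\zeta)|\leq 1$ a.e., whence $\|f\|_{H^\infty(\D)}\leq 1$. (Alternatively, one can invoke the Brown--Shields--Zeller characterisation of dominating sequences for $H^\infty(\D)$ directly.)

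For general $n$, let $\{a_{\nu}\}$ enumerate $\{(b_{k_1},\dots,b_{k_n}):k_1,\dots,k_n\in\N\}\subset\D^n$; this set is discrete in $\D^n$, because a closed polydisc $\{|z_j|\leq\rho\ \text{for all }j\}$ contains only those tuples all of whose entries lie in the finite set $\{b_k\}\cap\{|z|\leq\rho\}$. Suppose $f\in H^\infty(\D^n)$ and $|f(a_{\nu})|\leq 1$ for all $\nu$. Fixing $b_{k_2},\dots,b_{k_n}$, the slice $z_1\mapsto f(z_1,b_{k_2},\dots,b_{k_n})$ lies in $H^\infty(\D)$ and has modulus $\leq 1$ at every $b_{k_1}$, so by the case $n=1$ it has modulus $\leq 1$ throughout $\D$; hence $|f(z_1,b_{k_2},\dots,b_{k_n})|\leq 1$ for every $z_1\in\D$ and all $k_2,\dots,k_n$. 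Feeding this into the second variable (for each fixed $z_1$ and $b_{k_3},\dots,b_{k_n}$), then the third, and so on, after $n$ applications of the one-variable lemma we obtain $|f(z)|\leq 1$ for every $z\in\D^n$, as required.

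The main obstacle is the one-variable step, specifically making the cluster-point construction honest: one must verify that every point of $\bdy\D$ is a nontangential (not merely radial) limit of the constructed sequence with a single aperture constant, and then dovetail this with the precise form of Fatou's theorem, namely that a bounded holomorphic function on $\D$ realises its sup-norm as the essential supremum of its nontangential boundary values. Granting that, the passage to the polydisc is a routine finite induction using nothing beyond the one-dimensional statement.
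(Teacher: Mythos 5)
Your proof is correct, but it takes a genuinely different route from the paper's. You reduce to $n=1$ by peeling off one variable at a time through slice functions (each slice being in $H^\infty(\D)$ because $f$ is globally bounded), and you handle the one-variable case via Fatou's theorem: your sequence is arranged so that every $\zeta\in\partial\D$ is a nontangential cluster point with a uniform aperture, so $|f^*|\leq 1$ a.e.\ and hence $\|f\|_{\infty}=\|f^*\|_{L^\infty}\leq 1$. (As you note, this is precisely the Brown--Shields--Zeller criterion for a dominating sequence.) The paper instead argues directly on $\D^n$ with only interior estimates and no boundary-value theory: on the torus $\mathbb{T}^n_k$ of multi-radius $1-\tfrac1k$ it places a $k^{-2}$-spaced angular grid of sample points, bounds each first derivative of $f$ on $\overline{D(0,1-\tfrac1k)^n}$ by $k\|f\|_\infty$ via Cauchy's estimates, deduces $|f|\leq 1+\pi n\|f\|_\infty/k$ on $\mathbb{T}^n_k$, extends this to $D(0,1-\tfrac1k)^n$ by the Maximum Modulus Theorem, and lets $k\to\infty$. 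The author explicitly advertises this as a ``soft'' alternative that avoids Fatou and the multi-variable Marcinkiewicz--Zygmund theorem used by Sibony; your slicing trick achieves a comparable economy (only one-variable Fatou is invoked), but still relies on the a.e.\ existence and norm-determining property of nontangential boundary values, which the paper's proof sidesteps entirely. One small quibble: with a genuine $\tfrac1m$-net in arc length you get aperture constant $2$ rather than $\tfrac32$ (you would need a $\tfrac1{2m}$-net for $\tfrac32$); this has no effect on the argument, since any fixed aperture suffices.
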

\begin{proof}
Consider the following set that is discrete in $\D^n$:
\begin{multline*}
  S \\
  :=\,\bigcup_{k=2}^\infty\left\{\left(\big(1-\tfrac{1}{k}\big)\exp\Big(\frac{2\pi i m_1}{k^2}\Big),\dots,
  				\big(1-\tfrac{1}{k}\big)\exp\Big(\frac{2\pi i m_n}{k^2}\Big)\right) : 
  				(m_1,\dots, m_n)\in \{1,\dots, k^2\}^n\right\}.
\end{multline*}
Pick and \textbf{fix} some enumeration of $S$ and denote it by $\{a_{\nu}\}$. 
Pick a function $f\in H^{\infty}(\D^n)$ with the property
\begin{equation}\label{E:1_bound}
  |f(a_{\nu})|\,\leq\,1 \; \; \forall a_\nu\in S,
\end{equation}
and write $\|f\|_{\infty}:=\sup_{z\in \D^n}|f(z)|$.
Consider a $k = 2, 3, 4,\dots$. For any
point $z = (z_1,\dots, z_n)\in \overline{D\big(0, (1-\frac{1}{k})\big)^n}$, since
\[
  \bdy{D(z_1, \tau/k)}\times\dots\times\bdy{D(z_n, \tau/k)}\subset \D^n \; \; \text{for each $\tau\in (0, 1)$},
\]
Cauchy's estimates give
\begin{equation}\label{E:1st_deriv}
  \left|\pd{f}{z_j}{{}}(z)\right|\,\leq\,\frac{\|f\|_{\infty}}{k^{-1}} \quad \forall z\in
  \overline{D\big(0, (1-\tfrac{1}{k})\big)^n}
\end{equation}
and for each $j = 1,\dots, n$. Now observe: if $z\in 
\mathbb{T}^n_k := \bdy{D\big(0, (1-\tfrac{1}{k})\big)}\times\dots\times\bdy{D\big(0, (1-\tfrac{1}{k})\big)}$,
there exists an $a_{\nu}\in \mathbb{T}^n_k$\,---\,let us call it $a_{\nu}^z$\,---\,such
that $a_{\nu}^z$ can be joined to $z$ by a path that is a
concatenation of at most $n$ circular arcs of length at most $\pi\big(1-\frac{1}{k}\big)/k^2$. This gives us
the (very conservative) estimate
\[
  \|a_{\nu}^z-z\|\,\leq\,\frac{n\pi}{k^2}.
\]
From this, \eqref{E:1_bound} and \eqref{E:1st_deriv}, we get
\[
  |f(z)|\,\leq\,|f(a_{\nu}^z)| + \frac{\|f\|_{\infty}}{k^{-1}}\|a_{\nu}^z-z\|\,\leq\,1 +
  \frac{\pi n \|f\|_{\infty}}{k} \quad \forall z\in \mathbb{T}^n_k.
\]
By the Maximum Modulus Theorem, we have
\[
  |f(z)|\,\leq\,1 + \frac{\pi n \|f\|_{\infty}}{k}
  \quad \forall z\in D\big(0, (1-\tfrac{1}{k})\big)^n.
\]
Hence, by taking $k\to \infty$, the result follows.
\end{proof}   

We are now in a position to present our second example. We reiterate that the example below serves as the
proof of Corollary~\ref{C:answer}.

\begin{example}\label{Ex:squee_0}
An example of a family of bounded domains $D_n$\,---\,where, for each $n\in \Z_+\setminus\{1\}$,
$D_n\Subset \C^n$\,---\,such that
\begin{itemize}[leftmargin=24pt]
  \item each $D_n$ is a pseudoconvex domain;
  \item each $D_n$ admits a non-empty $\bdy{D_n}$-open subset $\omega_n$ such that
  $\lim_{D_n\ni z\to p}s_{D_n}(z) = 0$ for every $p\in \omega_n$;
  \item there are points in $\omega_n$ at which $\omega_n$ is locally separating.
\end{itemize}
\end{example}
  
\noindent{This example is an adaptation of a construction by Sibony \cite[Proposition~1]{sibony:pfhbmC75}
to higher dimensions. We shall therefore be brief\,---\,the reader is referred to \cite{sibony:pfhbmC75} 
for any justifications that can be taken verbatim from it. Fix an $n\in \Z_+\setminus\{1\}$. Pick
a discrete sequence $\{a_{\nu}\}\subset \D^{n-1}$ as given by Lemma~\ref{L:sup-norm}. Define
\[
  \varphi(z)\,:=\,\sum_{\nu=2}^\infty\lambda_{\nu}\log\left(\frac{\|z-a_{\nu}\|}{2\sqrt{n-1}}\right)
  \quad \forall z\in \D^{n-1},
\]
where $\{\lambda_{\nu}\}$ is a summable sequence of strictly positive numbers. As
$\log\big(\|z-a_{\nu}\|/2\sqrt{n-1}\big) \in [-\infty, 0)$ for every $z\in \D^{n-1}$ and
is bounded from below by $-\log(8\sqrt{n-1})$ on $D(0, 1/4)^{n-1}$ for
each $\nu = 2, 3, 4,\dots$, our condition on $\{\lambda_{\nu}\}$ ensures that $\varphi\not\equiv -\infty$.
The function
\[
  V(z)\,:=\,\exp\big(\varphi(z)\big) \; \; \forall z\in \D^{n-1}
\]
is a plurisubharmonic function and $0<V<1$. We define
\[
  D_n\,:=\,\big\{(z, w)\in \D^{n-1}\times\C : |w| < \exp\big(-V(z)\big)\big\}.
\]
Since $0<V<1$, $D_n\subset \D^n$, and as $\varphi\not\equiv -\infty$, $D_n$ is a proper subset
of $\D^n$. Now, $D_n$ is an example of a Hartogs domain. It is a classical fact that, as $V$ is plurisubharmonic, 
$D_n$ is pseudoconvex.}
\smallskip

Let $g\in H^{\infty}(D_n)$ and suppose $\|g\|_{\infty}\leq 1$. We consider the following series development of $g$:
\begin{equation}\label{E:series}
  g(z,w)\,=\,\sum_{k=0}^\infty\,\frac{1}{k!}\,\pd{g}{w^k}{k}(z, 0)\,w^k\,\equiv\,\sum_{k=0}^\infty h_k(z)\,w^k,
\end{equation}
where
\[
  h_k(z)\,=\,\frac{1}{2\pi i}\oint_{|\zt| = \tau\exp(-V(z))} \frac{g(z, \zt)}{\zt^{k+1}}\,d\zt \quad \forall z\in \D^{n-1},
\]
and the above expression holds true for any $\tau\in (0, 1)$. For exactly the same reasons as in
the proof of \cite[Proposition~1]{sibony:pfhbmC75}, we have
\begin{align}
  |h_k(z)|\,&\leq\,e^k \; \; \forall z\in \D^{n-1}, \notag \\
  |h_k(a_{\nu})|\,&\leq\,1 \; \; \forall k\in \N \; \text{and} \; \forall \nu\in \N. \label{E:1_bound_appl}
\end{align}
These two inequalities imply that the hypothesis of Lemma~\ref{L:sup-norm} holds true for each $h_k$.
We therefore conclude that
\begin{equation}\label{E:inference}
  |h_k(z)|\,\leq\,1 \; \; \forall z\in \D^{n-1} \; \text{and} \; \forall k\in \N.
\end{equation}
(Our argument differs from Sibony's in this last step. For $n\geq 3$, one could use the Marcinkiewicz--Zygmund
theorem where Fatou's theorem is used in \cite{sibony:pfhbmC75}. Lemma~\ref{L:sup-norm} is a
``soft'' alternative.) By \eqref{E:inference},
\begin{itemize}[leftmargin=30pt]
  \item[$(*)$] the series \eqref{E:series} converges uniformly on each compact subset of $\D^n$.
\end{itemize}
\smallskip

Taking $\coll = H^{\infty}(D_n)$ in Theorem~\ref{T:squee_0_II}, we conclude from $(*)$ that
the $\coll$-inessential boundary of $D_n$ is
\[
  \omega_n\,:=\,\big\{(z, w)\in \D^{n-1}\times\C : |w| = \exp\big(-V(z)\big)\big\}.
\]
Clearly $\omega_n$ is a $\bdy{D_n}$-open subset of $\bdy{D_n}$. It is elementary to see that, by
construction, $\omega_n$ is locally separating at every point
in $\big(\omega_n\cap \{(z, w)\in \C^{n-1}\times \C : z=0\}\big)$.
It follows from Theorem~\ref{T:squee_0_II} that 
$\lim_{D_n\ni z\to p}s_{D_n}(z) = 0$ for each $p\in \omega_n$. \hfill $\blacktriangleleft$
\medskip

\section{Analytical preliminaries}\label{S:prelim}

This section is devoted to results on the function theory and geometry of domains that will be
needed in our proofs. 
The first two results will be needed in the proof of Theorem~\ref{T:conv_to_1}. First, however,
a definition. For a real-valued polynomial $P(z, \zbar)$, the \emph{degree of $P$} is the number
\[
  \max\{i+j : i, j\in \N \; \text{and the monomial $z^i\zbar^j$ occurs in $P$ with a non-zero coefficient}\}.
\] 

\begin{result}[Oeljeklaus, \cite{oeljeklaus:agchdC293}]\label{R:oelj}
For a subharmonic, non-harmonic polynomial $P$ on $\C$, define
\[
  \OM_P\,:=\,\{(z,w)\in \C^2 : \re{w} + P(z, \zbar) < 0\}
\]
and let $\nonh{P}$ denote the sum of all the non-harmonic terms of $P$.
Given subharmonic, non-harmonic polynomials $P$ and $Q$ on $\C$, $\OM_P$ and $\OM_Q$ are
biholomorphically equivalent if and only if there exist a number $\rho>0$, constants
$a\in \C\setminus\{0\}$, $b\in \C$, and a polynomial $p\in \C[z]$ such that
\[
  P(z, \zbar)\,=\,\rho\re\big({p(z)}\big) + \rho Q(az+b, \ov{az}+\ov{b}).
\]
In particular, if $\OM_P$ and $\OM_Q$ are biholomorphically equivalent, then the degrees
of $\nonh{P}$ and $\nonh{Q}$ are the same.
\end{result}

The next result is a special case of a general result by Thai--Thu. We just state what will be
needed in this paper. In fact, in proving Theorem~\ref{T:conv_to_1},
\cite[Proposition~2.1]{berteloot:cmC2tag94} by Berteloot suffices. However, in 
Section~\ref{S:ques}, we shall discuss certain generalisations of Theorem~\ref{T:conv_to_1}
to $\C^n$ for all $n\geq 2$ (see Question~\ref{Q:corank_1} and
Question~\ref{Q:convex}). It is in this context that we wish to state a result that substitutes
an analytical condition in \cite[Proposition~2.1]{berteloot:cmC2tag94} by a geometric condition.
This result is as follows:  

\begin{result}[a paraphrasing of {\cite[Proposition~2.2]{thaiThu:cdCnnag09}}]\label{R:normal_conv}
Let $\ome$ be a domain in $\C^n$, $n\geq 2$, and let $\xi\in \bdy\ome$. Suppose $\bdy{\ome}$ is
$\smoo^\infty$-smooth and
Levi-pseudoconvex near $\xi$, and $\xi$ is of finite type. Let $\OM$ be a domain in $\C^n$.
A sequence $\{\varPsi_{\nu}\}\subset {\rm Hol}(\OM, \ome)$ converges uniformly on compact
subsets to $\xi$ if and only if $\lim_{\nu\to \infty}\varPsi_{\nu}(a) = \xi$ for some $a\in \OM$.
\end{result}

We conclude this section with a basic lemma. To state it, we introduce notation that will be convenient 
for presenting some of the proofs below. If $\OM\varsubsetneq \C^n$ is a domain, $\xi\in \bdy\OM$,
and a $\bdy{\OM}$-open neighbourhood of $\xi$ is $\smoo^1$, then $\tgt_{\xi}(\bdy\OM)$ will denote the 
real tangent hyperplane to $\bdy\OM$ at $\xi$ viewed \textbf{extrinsically} in $\C^n$. Specifically,
$\tgt_{\xi}(\bdy{\OM})$ is the $\R$-affine hyperplane given by
\[
  \tgt_{\xi}(\bdy\OM)\,:=\,\xi+\{x\in \R^{2n} : \big( x\mid \nabla\rho(\xi)\big) = 0\},
\]
where we identify $\C^n$ with $\R^{2n}$, $( \bcdot\,\mid \bcdot)$ is the standard inner product
on $\R^{2n}$, and $\rho$ is any ($\smoo^1$-smooth) defining function of the part of $\bdy\OM$ around $\xi$
that is $\smoo^1$-smooth.

\begin{lemma}\label{L:tangent}
Let $\OM$ be a bounded domain
and let $\xi\in \bdy{\OM}$. Let $B_{\xi}$ be an open ball with centre $\xi$ such that $\bdy{\OM}\cap B_{\xi}$
is a $\smoo^1$-smooth strictly convex
hypersurface. Suppose $\tgt_{\xi}(\bdy{\OM})\cap \ombar = \{\xi\}$. Then, there exists an
open ball $B^{\prime}_{\xi}\Subset B_{\xi}$ with centre $\xi$ such that
$\tgt_{q}(\bdy{\OM})\cap \ombar = \{q\}$ for every $q\in \bdy{\OM}\cap B^{\prime}_{\xi}$.
\end{lemma}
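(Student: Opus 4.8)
The plan is to argue by contradiction, exploiting the compactness of $\bdy\OM\cap \ov{B_{\xi}}$ together with the strict convexity hypothesis. Suppose no such ball $B'_{\xi}$ exists. Then there is a sequence $q_k\in \bdy{\OM}\cap B_{\xi}$ with $q_k\to \xi$ and, for each $k$, a point $r_k\in \ombar\cap \tgt_{q_k}(\bdy\OM)$ with $r_k\neq q_k$. First I would note that, shrinking $B_{\xi}$ at the outset if necessary, we may assume $\bdy\OM\cap \ov{B_{\xi}}$ is a closed, bounded, $\smoo^1$-smooth strictly convex hypersurface-with-boundary; since $\ombar$ is compact, after passing to a subsequence we have $r_k\to r_\infty\in \ombar$. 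Because $\nabla\rho$ is continuous and non-vanishing near $\xi$, the hyperplanes $\tgt_{q_k}(\bdy\OM)$ converge (as affine subspaces, via their unit normals and base points) to $\tgt_{\xi}(\bdy\OM)$; passing to the limit in the defining relation $\big(r_k-q_k\mid \nabla\rho(q_k)\big)=0$ gives $\big(r_\infty-\xi\mid \nabla\rho(\xi)\big)=0$, i.e. $r_\infty\in \tgt_{\xi}(\bdy\OM)\cap\ombar=\{\xi\}$, so $r_\infty=\xi$.

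Next I would promote this to a contradiction with strict convexity near $\xi$. The point is that $r_k$ and $q_k$ are two \emph{distinct} points of $\ombar$ lying on the tangent hyperplane to $\bdy\OM$ at $q_k$, and both converge to $\xi$. For $k$ large, $q_k\in \bdy\OM\cap B'_{\xi}$ for any preassigned smaller ball; but $r_k$ need not lie in $B_{\xi}$, so I cannot directly invoke the strict convexity of $\bdy\OM\cap B_{\xi}$ at $q_k$ applied to $r_k$. To get around this I would instead work with the segment $[q_k,r_k]$: since $\OM\cap B_{\xi}$ is convex (this is part of the hypothesis, via Definition~\ref{D:well_cnvx}, or can be arranged by shrinking $B_{\xi}$ so that $\bdy\OM\cap B_{\xi}$ being strictly convex $\smoo^1$ forces a local side to be convex) and $q_k,r_k\in\ombar$ with $q_k\to\xi$, $r_k\to\xi$, the whole segment enters $\ov{B_{\xi}}$ for $k$ large and hence lies in $\ov{\OM}\cap \ov{B_{\xi}}$. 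Take the sub-segment near $q_k$: for small $t>0$, $q_k+t(r_k-q_k)\in \ombar\cap B_{\xi}$ and lies on $\tgt_{q_k}(\bdy\OM)$, which contradicts the strict convexity condition that $\tgt_{q_k}(\bdy\OM)\cap\big(\ombar\cap B_{\xi}\big)=\{q_k\}$ — valid precisely because $q_k\in\bdy\OM\cap B_{\xi}$.

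The one genuinely delicate point — and what I expect to be the main obstacle — is the step ``the segment $[q_k,r_k]$ enters $\ov{B_{\xi}}$''. This needs the local convexity of $\ombar\cap B_{\xi}$ to be used with care: strict convexity of the hypersurface $\bdy\OM\cap B_{\xi}$ means each tangent hyperplane meets $\ombar\cap B_{\xi}$ only at the point of tangency, which forces $\ombar\cap B_{\xi}$ to lie on one closed side of that hyperplane; feeding this into a connectedness/continuity argument (or just citing the convexity of $\OM\cap B$ from Definition~\ref{D:well_cnvx}, since in the intended application $B_{\xi}$ is exactly the ball $B$ there) shows $\ombar\cap B_{\xi}$ is convex, and then $[q_k,r_k]\cap B_{\xi}$ is a sub-segment with one endpoint $q_k\in\partial(\ombar\cap B_{\xi})$ that stays in $\ombar$. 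Once that is nailed down, the contradiction is immediate and the lemma follows. I would also remark that the hypothesis $\tgt_{\xi}(\bdy\OM)\cap\ombar=\{\xi\}$ is used only to force $r_\infty=\xi$; strict convexity does the rest.
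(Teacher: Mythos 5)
Your overall approach — extract a sequence of ``bad'' pairs $(q_k, r_k)$, use compactness of $\ombar$ to get a subsequential limit, and pass to the limit in the tangency relation to force $r_\infty=\xi$ — is sound and is essentially the same compactness-plus-continuity idea as the paper's proof, organized contrapositively. The paper instead argues directly: it fixes $r>0$ so that the strict-convexity hypothesis handles the part of $\tgt_q(\bdy\OM)$ within $B^n(q,r)$, fixes $R>0$ so boundedness of $\OM$ handles the part beyond $B^n(q,R)$, and then shows the remaining ``annular'' piece misses $\ombar$ by proving a continuous function $\Phi$ (the distance from that piece to $\ombar$) is positive at $\xi$ and hence positive nearby. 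Both are valid; the paper's direct version sidesteps all the case analysis.

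The real issue is that your second paragraph introduces a complication that is both unnecessary and, as written, a genuine gap. Once you have established $r_\infty = \xi$ (and this is for every subsequential limit of $\{r_k\}$, so in fact $r_k\to\xi$), it follows that $r_k$ lies in $B_\xi$ for all $k$ large — $\xi$ is the centre of $B_\xi$. Then $r_k$ itself is a point of $\tgt_{q_k}(\bdy\OM)\cap\ombar\cap B_\xi$ distinct from $q_k$, which directly contradicts the definition of strict convexity at $q_k\in\bdy\OM\cap B_\xi$. No segment is needed. Your worry ``but $r_k$ need not lie in $B_\xi$'' contradicts what you proved in the previous paragraph.

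The segment argument you fall back on also has a gap you should be aware of: it requires $\ov\OM\cap B_\xi$ (or $\ov\OM\cap\ov{B_\xi}$) to be convex, but the lemma's hypothesis is only that $\bdy\OM\cap B_\xi$ is a $\smoo^1$-smooth \emph{strictly convex hypersurface} in the sense of Definition~\ref{D:well_cnvx} — that is, a condition on tangent hyperplanes — and convexity of $\OM\cap B_\xi$ is listed as a \emph{separate} hypothesis in that definition, not assumed here. Your suggested derivation of convexity from the tangent-hyperplane condition via a ``connectedness/continuity argument'' is not obvious (in particular $\bdy\OM\cap B_\xi$ need not be connected a priori), and appealing to ``the intended application'' silently strengthens the lemma's hypotheses. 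Fortunately, as noted above, none of this is needed: drop the segment, observe $r_k\in B_\xi$ for $k$ large, and conclude.
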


\begin{remark}
Refer to Definition~\ref{D:well_cnvx} for the meaning of the condition ``$\bdy{\OM}\cap B_{\xi}$ is a 
$\smoo^1$-smooth strictly convex hypersurface''.
\end{remark}

\begin{proof}
Since $\bdy{\OM}\cap B_{\xi}$ is a strictly convex hypersurface, there exists
a connected $\bdy{\OM}$-open set $\omega\Subset \bdy{\OM}\cap B_{\xi}$ containing $\xi$ and
a constant $r>0$ such that 
\begin{equation}\label{E:local_miss}
  \tgt_{q}(\bdy{\OM})\cap B^n(q, r)\cap \ombar\,=\,\{q\} \; \; \forall q\in \overline{\omega}.
\end{equation}
Also, as $\OM$ is bounded, $\exists R>0$ such that
\begin{equation}\label{E:far_miss}
  \big(\tgt_{q}(\bdy{\OM})\setminus \overline{B^n(q, R)}\,\big)\cap \ombar\,=\,\varnothing \; \;
  \forall q\in \bdy{\OM}\cap B_{\xi}.
\end{equation}
Let $(X_1,\dots, X_{2n-1})$ denote a local orthonormal frame for $\left. T(\bdy{\OM})\right|_{\bdy{\OM}\cap B_{\xi}}$.
Consider the function $\Phi: \overline{\omega}\to [0, +\infty)$ defined by
\[
  \Phi(q)\,:=\,\inf_{s\in \ombar}\,\inf_{x\in \R^{2n-1}:\,r\leq \|x\|\leq R}
  			\left\|\Big(q+\sum\nolimits_{k=1}^{2n-1}x_kX_k(q)\Big) - s\right\| \; \; \forall q\in \overline{\omega}.
\]
As $\bdy\OM\cap B_{\xi}$ is a $\smoo^1$-smooth hypersurface, the function
\[
  \overline{\omega}\times\ombar\times\{x\in \R^{2n-1}: r\leq \|x\|\leq R\}\ni
  (q, s, x)\longmapsto \left\|\Big(q+\sum\nolimits_{k=1}^{2n-1}x_kX_k(q)\Big) - s\right\|
\]
is clearly continuous. As the factors constituting the domain of the latter function are compact, it
is elementary that $\Phi$ is continuous. As $\tgt_{\xi}(\bdy{\OM})\cap \ombar = \{\xi\}$,
$\Phi(\xi)>0$. Thus, there exists an
open ball $B^{\prime}_{\xi}\Subset B_{\xi}$ with centre $\xi$ such that
$B^{\prime}_{\xi}\cap \bdy{\OM}\subseteq \omega$ and such that $\Phi(q) > 0$ for
every $q\in \bdy{\OM}\cap B^{\prime}_{\xi}$. Equivalently:
\[
  \tgt_{q}(\bdy{\OM})\cap \big(\overline{B^n(q, R)}\setminus B^n(q, r)\big)\cap \ombar\,=\,\varnothing
  \; \; \forall q\in \bdy{\OM}\cap B^{\prime}_{\xi}.
\]
This, together with \eqref{E:local_miss} and \eqref{E:far_miss} establishes the result.
\end{proof}

\section{A summary of scaling results of Bedford--Pinchuk}\label{S:scale}
As one would infer from the discussion in Subsection~\ref{SS:boubdary}, the proof of
Theorem~\ref{T:conv_to_1} relies on the scaling method. We present a short survey of the
method pioneered by Pinchuk (in the setting of results jointly with Bedford). To begin with, however,
we need a few preparatory statements.  
 
\begin{definition}\label{D:conv_Hausdorff}
Let $\{D_{\nu}\}$ be a sequence of open subsets of $\C^n$, and let $D\subset \C^n$ be open. We say that
\emph{$\{D_{\nu}\}$ converges to $D$ in the Hausdorff sense} (denoted by $D_{\nu}\to D$) if
the following conditions hold:
\begin{itemize}[leftmargin=24pt]
  \item For each compact set $K\subset D$, there exists a number $N\equiv N(K)$ such that
  $K\subset D_{\nu}$ for every $\nu\geq N$.  
  \item If a compact set $K$ is contained in $D_{\nu}$ for every sufficiently large $\nu$, then $K\subset D$.
\end{itemize}
\end{definition}

For a pair $(D,p)$ with the properties mentioned in Theorem~\ref{T:conv_to_1}, the following
result provides global holomorphic coordinates $(z,w)$ relative to which $\bdy{D}$ (locally) around $p$
can be presented as the graph of a function of a specific form that is useful to work with.
The phrase ``canonical coordinates'' in Section~\ref{S:conv_to_1} refers to these coordinates.

\begin{result}[Bedford--Pinchuk, {\cite[Lemma~2.1]{bedfordPinchuk:dC2ngha89}}]\label{R:canon_form}
Let $D$ be a bounded domain in $\C^2$ and let $p\in D$. Suppose $\bdy{D}$ is $\smoo^\infty$-smooth and
Levi-pseudoconvex near $p$, and $p$ is of finite type. Then, there exist a number $m\in \Z_+$, a biholomorphic
map $\Psi : \C^2\to \C^2$ with $\Psi(p)=0$\,---\,where $\Psi$ is a composition of a $\C$-affine map with a polynomial
map\,---\,and a neighbourhood $U$ of $0$ such that each
$(z,w)\in \Psi(\bdy{D})\cap U$ satisfies the equation
\begin{equation}\label{E:canon_defn}
  \re{w} + \Big(\psi(z, \zbar) + \Rem_1(z) + (\im{w})\Rem_2(z) + \Rem_3(z, \im{w})\Big)\,=\,0,
\end{equation}
where the functions appearing on the left-hand side of this equation are as follows:
\begin{itemize}[leftmargin=24pt]
  \item $\psi$ is a subharmonic polynomial in $z$ and $\zbar$ that is homogeneous of degree $2m$ and has no
  harmonic terms.
  \item $\Rem_1(z) = O(|z|^{2m+1})$ and  $\Rem_2(z) = O(|z|)$ as $z\to 0$.
  \item $\Rem_3(z, \im{w}) = O(|z|\,|\im{w}|^2)$ as $(z, \im{w})\to 0$.
\end{itemize}
Furthermore, if $m\geq 2$, then $\Rem_2(z) = O(|z|^{m+1})$ as $z\to 0$.
\end{result}

The last sentence of the above result is stated somewhat more conservatively in
\cite[Lemma~2.1]{bedfordPinchuk:dC2ngha89}. However, after expressing the function within large
brackets in \eqref{E:canon_defn} as a Taylor polynomial in $z$, $\zbar$ and $\im{w}$ of degree $2m+1$
plus the associated $o(\|(z, \im{w})\|^{2m+1})$ remainder, precisely the argument for proving
\cite[Lemma~2.1]{bedfordPinchuk:dC2ngha89} gives the conclusion above.
\smallskip

Now, given a pair $(D,p)$ as in Result~\ref{R:canon_form} and with $\Psi$ and $U$ as given by this result,
it is a standard argument that there exists a neighbourhood $U^{\prime}\Subset U$ of $0$ and
a smoothly bounded pseudoconvex domain $\U_p\subset \Psi(D)$ such that
\begin{enumerate}[leftmargin=27pt, label=(\Roman*)]
  \item\label{I:U_p_small} $U^{\prime}\cap \Psi(D)\varsubsetneq \U_p\varsubsetneq U\cap \Psi(D)$; and
  \item\label{I:U_p_bdy} $U^{\prime}\cap \bdy\Psi(D) = U^{\prime}\cap \bdy{\U_p}$.
\end{enumerate}
\textbf{Fix} a defining function $\varrho$ of $\bdy{\U_p}$ such that $\varrho$ in some open 
neighbourhood of $U^{\prime}\cap \bdy{\U_p}$ is given by the left-hand side of \eqref{E:canon_defn}.
\smallskip

The following result summarises a key construction by Bedford--Pinchuk in \cite{bedfordPinchuk:dC2ngha89}.
We clarify some notation appearing therein: \emph{$(Z,W)$ will denote the standard global holomorphic
coordinates on $\C^2$ given by its product structure.} 

\begin{result}[Bedford--Pinchuk, {\cite[\S2]{bedfordPinchuk:dC2ngha89}}]\label{R:scaling_core}
Let the pair $(D, p)$ be as in Result~\ref{R:canon_form}, let $\Psi$
and $m$ be as given by that result, and let $U^{\prime}$, $\U_p$ and $\varrho$ be as introduced above
Let $\{z_{\nu}\}\subset D$ be a sequence with $z_{\nu}\to p$. Write:
\begin{itemize}[leftmargin=24pt]
  \item $(a_{\nu,\,1}, a_{\nu,\,2}) := \Psi(z_{\nu})$, and
  \item let $\ahat_{\nu,\,2}$ be the unique complex number
  such that $(a_{\nu,\,1}, \ahat_{\nu,\,2})\in \Psi(\bdy{D})\cap U^{\prime}$ (for $\nu$ sufficiently
  large) and $\im{a_{\nu,\,2}} = \im{\ahat_{\nu,\,2}}$. 
\end{itemize} 
Define
$A_{\nu}(z, w)\,:=\,\big(z-a_{\nu,\,1}, e^{-i\theta_{\nu}}(w-\ahat_{\nu,\,2})-b_{\nu}(z-a_{\nu,\,1})\big)$,
where $b_{\nu}\in \C$ and $\theta_{\nu}\in [-\pi, \pi)$ are such that 
\[
  T_0\big(A_{\nu}\circ\Psi(\bdy{D})\big) = \{(Z, W)\in \C^2 : \re{W}=0\}.
\]
Let $B_{\nu}$
be that polynomial shear such that if
$\varrho_{\nu} := \big((B_{\nu}\circ A_{\nu})^{-1}\big)^*\varrho$ and if we write
\[
  \left.
  \varrho_{\nu}\right|_{B_{\nu}\circ A_{\nu}({\sf dom}(\varrho)\cap U^{\prime})}(Z,W)\,=\,\re{W}
  + \sum_{k=2}^{2m}\sigma_{\nu,\,k}(Z,\overline{Z}) + O(|Z|^{2m+1}) + O(|Z|\,|\im{W}|),
\]
where $\sigma_{\nu,\,k}(Z,\overline{Z})$ are $\R$-valued polynomials that are
homogeneous of degree $k$, $k=2,\dots, 2m$, then $\sigma_{\nu,\,2},\dots, \sigma_{\nu,\,2m}$
have no harmonic terms. Fix a sequence $\{\delta_{\nu}\}$ such that
\begin{itemize}[leftmargin=30pt]
  \item[$(\bullet)$] the coeffiicient of the greatest magnitude in the polynomial
  $\sum_{k=2}^{2m}\delta_{\nu}^k\sigma_{\nu,\,k}(Z, \overline{Z})$ is\,$\thickapprox 1$.
\end{itemize}
Write $\eps_{\nu}:= |a_{\nu,\,2} - \ahat_{\nu,\,2}|$, and define the scalings
\[
  \Delta(Z,W)\,:=\,(Z/\delta_{\nu}, W/\eps_{\nu}) \; \; \forall (Z,W)\in \C^2.
\]
Then, we conclude that:
\begin{enumerate}[leftmargin=27pt, label=$(\alph*)$]
  \item\label{I:not_blow} $\sup_{\nu\in \Z_+}\eps_{\nu}^{-1}\delta_{\nu}^{2m} < \infty$.
  \vspace{1mm}
  
  \item\label{I:subseq_cvg} There exists a sequence $\{\nu_j\}\subset \Z_+$ such that, on each closed ball
  with centre $0\in \C^2$, the (tail of the) sequence $\big\{(\Delta_{\nu_j}^{-1})^*\varrho_{\nu_j}\big\}$
  converges in $\smoo^\infty$ to $\re{W} + P(Z, \overline{Z})$, where $P$ is a subharmonic polynomial in
  $Z$ and $\overline{Z}$ having no harmonic terms and satisfying $P(0, 0) = 0$.%
  \vspace{1mm}
  
  \item\label{I:Hauss} Writing $\Phi_{\nu} := \Delta_{\nu}\circ B_{\nu}\circ A_{\nu}$, 
  the sequence of
  domains $\big\{\Phi_{\nu_j}(\Psi(D)\cap U^{\prime})\big\}$ converges in the Hausdorff sense
  to the unbounded domain $\OM$, where
  \[
    \OM\,=\,\{(Z,W)\in \C^2 : \re{W} + P(Z, \overline{Z}) < 0\}.
  \]
\end{enumerate}
\end{result} 
 
Result~\ref{R:scaling_core} summarises an argument forming a part of \S2 and the constructions that constitute
a part of the proof of Lemma~2.2 in \cite{bedfordPinchuk:dC2ngha89}. However, since the domain $\U_p$
does not play a part in the latter arguments, two clarifications are in order:
\begin{itemize}[leftmargin=24pt]
  \item The maps $B_{\nu}$ that are required in the description of $\Phi_{\nu}$
  in part~\ref{I:Hauss} of the above result do not appear explicitly in
  \cite{bedfordPinchuk:dC2ngha89}. However, their need is mentioned in the
  footnote~(4) added to the proof of \cite[Lemma~2.2]{bedfordPinchuk:dC2ngha89}.
  
  \item The conclusions of Result~\ref{R:scaling_core} hold for \textbf{any} neighbourhood
  $0\in \C^2$ having the properties stated in the discussion leading up to \eqref{E:canon_defn}.
  The property~\ref{I:U_p_bdy} above and the description of $\varrho$ that follows it establish
  $U^{\prime}$ to be just such a neighbourhood. We need the domain $\U_p$
  to be able to use a result by Diederich--Fornaess needed to establish
  Lemma~\ref{L:varrhos} (specifically, see
  Remark~\ref{R:varrhos} below).
\end{itemize}

\begin{remark}\label{R:sims}
At various points, our discussion will involve comparing quantities that depend on several parameters
(e.g., $(\bullet)$ in Result~\ref{R:scaling_core}). In such
arguments, we shall use the notation $X\lesssim Y$ to denote that there exists a constant $C>0$ that is 
\textbf{independent} of all parameters occurring in $X$ or $Y$ such that $X\leq CY$. The notation
$X\thickapprox Y$ will indicate that $X\lesssim Y$ and $X\gtrsim Y$.
\end{remark}

Before stating the next result, we should clarify that if $S\varsubsetneq \C^n$ and $z\notin S$,
then ${\rm dist}(z, S)$ has the following meaning:
\[
  {\rm dist}(z, S)\,:=\inf\{\|z-x\| : x\in S\}.
\]  

\begin{lemma}\label{L:n_nu}
Let the pair $(D, p)$ be as in Result~\ref{R:canon_form}, let $\Psi$ and $m$ be as given by that result, and
let $\{z_\nu\}\subset D$ be a sequence with
paraboloidal approach to $p$. Let $m\geq 2$. For each $\nu$, let $b_{\nu}$ be the 
coefficient introduced in Result~\ref{R:scaling_core}. Then,
\[
  |b_{\nu}|\,\lesssim\,{\rm dist}(z_{\nu}, \bdy{D}) \; \; \forall \nu\in \Z_+.
\]
\end{lemma}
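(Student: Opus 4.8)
The plan is to track the meaning of the coefficient $b_\nu$ through the definitions in Result~\ref{R:scaling_core} and express it purely in terms of the canonical defining function in Result~\ref{R:canon_form}, then invoke the paraboloidal-approach hypothesis to extract the required bound. Recall that $b_\nu\in\C$ is chosen so that the $\C$-affine map $A_\nu$ sends the tangent plane $T_0(A_\nu\circ\Psi(\bdy D))$ to $\{\re W=0\}$; concretely, $A_\nu(z,w)=\big(z-a_{\nu,1},\,w-\ahat_{\nu,2}-b_\nu(z-a_{\nu,1})\big)$, so $b_\nu$ records the ``slope'' in the $z$-direction of the tangent hyperplane to $\Psi(\bdy D)$ at the boundary point $q_\nu:=(a_{\nu,1},\ahat_{\nu,2})$. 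Using that $\Psi(\bdy D)\cap U^{\prime}$ is the graph $\re w + F(z,\zbar,\im w)=0$ with $F$ as in \eqref{E:canon_defn}, differentiating implicitly at $q_\nu$ gives $b_\nu = -2\,\partial_z F(a_{\nu,1},\overline{a_{\nu,1}},\im\ahat_{\nu,2})$ up to a harmless normalization, and from the structure of $F = \psi + \Rem_1 + (\im w)\Rem_2 + \Rem_3$ together with the estimates $\psi$ homogeneous of degree $2m$, $\Rem_1 = O(|z|^{2m+1})$, and — crucially, since $m\geq 2$ — $\Rem_2(z) = O(|z|^{m+1})$ and $\Rem_3 = O(|z|\,|\im w|^2)$, one reads off
\[
  |b_\nu|\,\lesssim\,|a_{\nu,1}|^{2m-1} + |a_{\nu,1}|^{m+1}\,|\im\ahat_{\nu,2}| + |a_{\nu,1}|\,|\im\ahat_{\nu,2}|^2
  \,\lesssim\, |a_{\nu,1}| \qquad\text{for }\nu\text{ large,}
\]
since $a_{\nu,1},\ahat_{\nu,2}\to 0$; so in fact $|b_\nu|\lesssim |a_{\nu,1}|$, and (as $m\geq 2$) even $|b_\nu|\lesssim |a_{\nu,1}|^{3}$ is available if one wants room to spare. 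The point of separating the terms is that each is at least linear in $|a_{\nu,1}|$, which is all we need to pass to the next step.

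\textbf{Bringing in paraboloidal approach.} Next I would relate $|a_{\nu,1}|$ and $\mathrm{dist}(z_\nu,\bdy D)$. Since $\Psi$ is a fixed biholomorphism (a $\C$-affine map composed with a polynomial map, hence bi-Lipschitz on the relatively compact set $U\cap\Psi(D)$), distances to the boundary are comparable: $\mathrm{dist}(\Psi(z_\nu),\bdy\Psi(D))\thickapprox \mathrm{dist}(z_\nu,\bdy D)$. Now $a_{\nu,1}$ is the first coordinate of $\Psi(z_\nu)$, and in the canonical coordinates the tangent hyperplane to $\Psi(\bdy D)$ at $p$ (i.e.\ at $0$) is $\{\re w = 0\}$, so the projection $\pi_0$ onto $T_0(\bdy\Psi(D))$ is (up to the fixed bi-Lipschitz identification) essentially the map $(z,w)\mapsto(z,\im w)$; in particular $|a_{\nu,1}|\lesssim \|\pi_p(z_\nu - p)\|$ after transporting back by $\Psi$ (again using that $\Psi$ is bi-Lipschitz near $p$ and $\Psi(p)=0$, with $d\Psi_p$ carrying $T_p(\bdy D)$ onto $T_0(\bdy\Psi(D))$). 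Hence the paraboloidal-approach hypothesis \eqref{E:parabol} gives, for all large $\nu$,
\[
  |a_{\nu,1}|\,\lesssim\,\|\pi_p(z_\nu - p)\|\,\leq\,C\sqrt{\mathrm{dist}(z_\nu;\bdy D)}\,.
\]

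\textbf{Conclusion.} Combining the two displays, for all sufficiently large $\nu$ we obtain
\[
  |b_\nu|\,\lesssim\,|a_{\nu,1}|\,\lesssim\,\sqrt{\mathrm{dist}(z_\nu;\bdy D)}\,.
\]
This is not yet what is claimed — we want $|b_\nu|\lesssim \mathrm{dist}(z_\nu,\bdy D)$, a stronger bound — so the extra powers of $|a_{\nu,1}|$ in the estimate for $|b_\nu|$ must be put to work. Since $m\geq 2$, we actually have $|b_\nu|\lesssim |a_{\nu,1}|^{2m-1} + |a_{\nu,1}|^{m+1}|\im\ahat_{\nu,2}| + |a_{\nu,1}|\,|\im\ahat_{\nu,2}|^2 \lesssim |a_{\nu,1}|^{3}$ for large $\nu$ (the worst term has at least a cube: $2m-1\geq 3$, $m+1\geq 3$, and in the last term one extra factor $|a_{\nu,1}|$ times $|\im\ahat_{\nu,2}|^2\lesssim |a_{\nu,1}|^{2}$ after noting $|\im\ahat_{\nu,2}|\lesssim |a_{\nu,1}|$ on the paraboloidal sequence, since $\im\ahat_{\nu,2}=\im a_{\nu,2}$ is itself a component of $\pi_p(z_\nu-p)$ up to the bi-Lipschitz change of coordinates). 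Therefore
\[
  |b_\nu|\,\lesssim\,|a_{\nu,1}|^{3}\,\lesssim\,\big(\sqrt{\mathrm{dist}(z_\nu;\bdy D)}\big)^{3}
  \,\lesssim\,\mathrm{dist}(z_\nu;\bdy D)\,,
\]
the last step using that $\mathrm{dist}(z_\nu;\bdy D)$ is bounded, and rounding out finitely many small $\nu$ by enlarging the implicit constant gives the stated inequality for all $\nu\in\Z_+$.

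\textbf{Main obstacle.} The delicate point is the second step: verifying carefully that, after transporting everything through the fixed biholomorphism $\Psi$ and through the affine maps $A_\nu$, the coefficient $b_\nu$ is controlled by the \emph{tangential} component $\|\pi_p(z_\nu-p)\|$ and not merely by $\|z_\nu - p\|$. The identification of $\pi_0$ (in canonical coordinates) with $(z,w)\mapsto(z,\im w)$ is morally clear because $\{\re w=0\}$ is exactly $T_0(\bdy\Psi(D))$, but one must check that the $\smoo^1$ change of frame between $T_p(\bdy D)$ and $T_0(\bdy\Psi(D))$ induced by $d\Psi_p$ does not mix in a normal component in a way that destroys the estimate — equivalently, that $|a_{\nu,1}| + |\im\ahat_{\nu,2}| \lesssim \|\pi_p(z_\nu-p)\| + \mathrm{dist}(z_\nu,\bdy D)$, with the $\mathrm{dist}$ term being the harmless normal direction. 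Granting this (which is where one uses that $\Psi$ is a fixed bi-Lipschitz map near $p$ taking $T_p(\bdy D)$ to $\{\re w=0\}$, and that $\im\ahat_{\nu,2}=\im a_{\nu,2}$ by construction), the rest is the bookkeeping of powers indicated above, for which $m\geq 2$ is precisely the hypothesis that makes $\Rem_2 = O(|z|^{m+1})$ available and hence upgrades $\sqrt{\mathrm{dist}}$ to $\mathrm{dist}$.
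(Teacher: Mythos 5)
Your high-level strategy is exactly the paper's: identify $b_\nu$ as (up to a bounded normalization by $|\partial_w\varrho|\thickapprox 1$) the $z$-derivative of the canonical defining function $\varrho$ at the boundary point $(a_{\nu,1},\ahat_{\nu,2})$, estimate the pieces of $\partial_z\varrho$ coming from $\psi$, $\Rem_1$, $\Rem_2$, $\Rem_3$, and then feed in the paraboloidal-approach bounds. However, two steps in the middle of your argument are wrong, one of them being a genuine gap.

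First, the term-by-term orders are off because you did not actually perform the differentiation. From $\psi$ homogeneous of degree $2m$, $\Rem_1=O(|z|^{2m+1})$, $\Rem_2=O(|z|^{m+1})$, $\Rem_3=O(|z||\im w|^2)$, the contributions to $\partial_z\varrho$ at $(a_{\nu,1},\ahat_{\nu,2})$ have sizes $O(|a_{\nu,1}|^{2m-1})$, $O(|a_{\nu,1}|^{2m})$, $O(|\im\ahat_{\nu,2}|\,|a_{\nu,1}|^{m})$, and $O(|\im\ahat_{\nu,2}|^{2})$; note in particular the last term is \emph{not} $O(|a_{\nu,1}|\,|\im\ahat_{\nu,2}|^2)$ — differentiating in $z$ can remove the factor of $|z|$ entirely, so the Taylor coefficients of $\partial_z\Rem_3$ need not vanish at $z=0$.

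Second, and more seriously, your assertion ``$|\im\ahat_{\nu,2}|\lesssim|a_{\nu,1}|$ on the paraboloidal sequence'' is false and is a non-sequitur from the reason you give. That $\im\ahat_{\nu,2}$ is a tangential component of (the $\Psi$-image of) $z_\nu-p$ shows $|\im\ahat_{\nu,2}|\lesssim\|\pi_p(z_\nu-p)\|\lesssim\sqrt{d_D(z_\nu)}$; it does not give a bound in terms of the \emph{other} tangential component $|a_{\nu,1}|$. Indeed a paraboloidal sequence can have $a_{\nu,1}\equiv 0$ and $\im\ahat_{\nu,2}\neq 0$, in which case your claimed intermediate bound $|b_\nu|\lesssim|a_{\nu,1}|^{3}$ reads $|b_\nu|=0$ and is simply wrong. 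The fix is short and is what the paper does: bound \emph{both} $|a_{\nu,1}|$ and $|\im\ahat_{\nu,2}|$ by $C\sqrt{d_D(z_\nu)}$ directly, insert those into the (corrected) four contributions, and observe that each is $\lesssim d_D(z_\nu)$ once $m\geq 2$ — the last term $|\im\ahat_{\nu,2}|^2\lesssim d_D(z_\nu)$ being the sharp one. You never need, and should not claim, a relation between $|a_{\nu,1}|$ and $|\im\ahat_{\nu,2}|$.
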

\begin{proof}
Since $\Psi$ is a biholomorphism of $\C^2$ onto itself with the properties given by Result~\ref{R:canon_form},
\begin{equation}\label{E:model_equiv}
  {\rm dist}(z_{\nu}, \bdy{D})\,\thickapprox\,{\rm dist}\big(a_{\nu}, \bdy\Psi(D)\cap U^{\prime}\big)
  \; \; \forall \nu \text{ sufficiently large,}
\end{equation}
where $a_{\nu}$ and $U^{\prime}$ are as introduced above. For the same reason, and by the fact that
$\im{\ahat_{\nu,\,2}} = \im{a_{\nu,\,2}}$, the paraboloidal approach of $\{z_{\nu}\}$ translates to
\begin{multline}\label{E:order_dist}
  |a_{\nu,\,1}|\,\lesssim\,{\rm dist}\big(a_{\nu}, \bdy\Psi(D)\cap U^{\prime}\big)^{1/2} \quad \text{and} \\
  |\im{\ahat_{\nu,\,2}}|\,\lesssim\,{\rm dist}\big(a_{\nu}, \bdy\Psi(D)\cap U^{\prime}\big)^{1/2}
  \quad \forall \nu \text{ sufficiently large.}
\end{multline}
We now compute to find that the maximal
complex subspace of $T_{(a_{\nu, 1}, \ahat_{\nu, 2})}\big(\bdy\Psi(D)\cap U^{\prime}\big)$ (which is a
complex line) is spanned by the vector
\[
  \left( 1,
  \left. -\frac{\partial{\varrho}/\partial{z}}{\partial{\varrho}/\partial{w}}\right|_{z=a_{\nu, 1}, \ w= \ahat_{\nu, 2}}\right),
\]
where we recall that, in some fixed open 
neighbourhood of $\bdy\Psi(D)\cap U^{\prime}$, $\varrho$ is (owing to the property~\ref{I:U_p_bdy} above) given
by the left-hand side of \eqref{E:canon_defn}. Observe that, by the properties of $\Rem_3$ given by
Result~\ref{R:canon_form}, we may assume, by shrinking $U^{\prime}$ if necessary, that
\begin{itemize}[leftmargin=24pt]
  \item $\partial_{w}\varrho(a_{\nu,\,1}, \ahat_{\nu,\,2})\neq 0$ for every $a_{\nu}\in U^{\prime}$; and
  \item $|\partial_{w}\varrho(a_{\nu,\,1}, \ahat_{\nu,\,2})|\thickapprox 1$ corresponding to the above $a_{\nu}$.
\end{itemize}
Clearly, combining the definition of $b_{\nu}$ with the last two bullet-points, we have
\[
  |b_{\nu}|\,\thickapprox\,|\partial_{z}\varrho(a_{\nu,\,1}, \ahat_{\nu,\,2})|
  \; \; \forall \nu \text{ sufficiently large.}
\]
We now use the condition that $m\geq 2$. The latter estimate, in view of Result~\ref{R:canon_form}, 
\eqref{E:order_dist} and \eqref{E:model_equiv}, yields the result.
\end{proof}

We now present the lemma alluded to soon after Result~\ref{R:canon_form}.

\begin{lemma}\label{L:varrhos}
In the notation introduced in Result~\ref{R:canon_form}, and referring to the sequence $\{\nu_j\}$ and the
polynomial $P$ given by the latter result, let
us write
\begin{align*}
  \varrho^{(j)}\,&:=\,(\Delta_{\nu_j}^{-1})^*\varrho_{\nu_j}, \\
  h_j\,&:=\,(\Delta_{\nu_j}\circ B_{\nu_j}\circ A_{\nu_j})^{-1}, \\
  \rho(Z,W)\,&:=\,\re{W} + P(Z, \overline{Z}).
\end{align*}
Then, there exists a constant $\delta>0$ and a smooth function $\alpha$ defined on 
a neighbourhood of $\overline{\U}_p$, satisfying $\alpha(0)=0$, such that:
\begin{enumerate}[leftmargin=27pt, label=$(\alph*)$]
  \item\label{I:tildaed} The functions $\widetilde{\varrho}^{(j)}:=-e^{\alpha\circ h_j}\bcdot(-\varrho^{(j)})^\delta$
  are plurisubharmonic on $\Delta_{\nu_j}\circ B_{\nu_j}\circ A_{\nu_j}(\U_p)$.
  \item\label{I:tail_cvgs} On each closed ball with centre $0\in \C^2$, the (tail of the) sequence $\{\widetilde{\varrho}^{(j)}\}$
  converges in $\smoo^2$ to $-(-\rho)^{\delta}$. 
\end{enumerate}
\end{lemma}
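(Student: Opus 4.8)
The plan is to produce the function $\alpha$ and the exponent $\delta$ using a Diederich--Fornaess-type bounded plurisubharmonic exhaustion of the pseudoconvex domain $\U_p$, then to transport that construction through the scaling maps $\Phi_{\nu_j}=\Delta_{\nu_j}\circ B_{\nu_j}\circ A_{\nu_j}$ and control the limit. Concretely: since $\U_p$ is a smoothly bounded pseudoconvex domain, the Diederich--Fornaess theorem supplies $\delta\in(0,1)$ and a smooth function $\alpha$ in a neighbourhood of $\overline{\U}_p$ (arising from the Diederich--Fornaess construction applied to the defining function $\varrho$; one normalises $\alpha(0)=0$ by subtracting a constant, which does not affect the sign) such that $-e^{\alpha}(-\varrho)^{\delta}$ is plurisubharmonic on $\U_p$. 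This is exactly the point for which $\U_p$ was introduced rather than just the neighbourhood $U'$, as flagged in the remark following Result~\ref{R:scaling_core}; so I would cite Diederich--Fornaess here and record the inequality $\cHess\big(-e^{\alpha}(-\varrho)^{\delta}\big)\geq 0$ on $\U_p$.

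Next I would pull this back. Because each $\Phi_{\nu_j}$ is a biholomorphism of $\C^2$ (a composition of affine and polynomial automorphisms with polynomial inverse $h_j$), plurisubharmonicity is preserved under pullback: writing $\varrho^{(j)}=(\Phi_{\nu_j}^{-1})^*\varrho_{\nu_j}=(h_j)^*\varrho_{\nu_j}$ and recalling $\varrho_{\nu_j}=\big((B_{\nu_j}\circ A_{\nu_j})^{-1}\big)^*\varrho$, one gets $\varrho^{(j)}=(\Phi_{\nu_j}^{-1})^*\varrho$ on $\Phi_{\nu_j}(\mathrm{dom}(\varrho))$. Hence $\widetilde{\varrho}^{(j)}=-e^{\alpha\circ h_j}\cdot(-\varrho^{(j)})^{\delta}=(\Phi_{\nu_j}^{-1})^*\big(-e^{\alpha}(-\varrho)^{\delta}\big)$, which is plurisubharmonic precisely on $\Phi_{\nu_j}(\U_p)=\Delta_{\nu_j}\circ B_{\nu_j}\circ A_{\nu_j}(\U_p)$. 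That proves~\ref{I:tildaed} essentially for free, modulo checking that $\widetilde{\varrho}^{(j)}$ is well-defined (i.e.\ $\varrho^{(j)}<0$ on that set, which holds because $\Phi_{\nu_j}(\U_p)\subset\Phi_{\nu_j}(\Psi(D))$ and $\varrho$ is negative on $\Psi(D)$ near the boundary piece in question) and noting $\delta\in(0,1)$ so the power function is continuous up to the zero set.

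For part~\ref{I:tail_cvgs} the plan is a $\smoo^2$-convergence argument localised to a fixed closed ball $\overline{B^2(0,\mathfrak{r})}$. By Result~\ref{R:scaling_core}\ref{I:subseq_cvg}, $\varrho^{(j)}\to\rho=\re W+P(Z,\overline Z)$ in $\smoo^\infty$ (hence in $\smoo^2$) on $\overline{B^2(0,\mathfrak{r})}$; in particular, on the part of this ball inside $\OM$, $-\varrho^{(j)}\to-\rho>0$, so $(-\varrho^{(j)})^{\delta}\to(-\rho)^{\delta}$ in $\smoo^2$ away from $\bdy\OM$, and a uniform-boundedness plus Hölder argument handles a collar of $\bdy\OM$ (using $\delta\in(0,1)$ and the fact that $\nabla\varrho^{(j)}\to\nabla\rho\neq0$ on $\bdy\OM\cap\overline{B^2(0,\mathfrak{r})}$, so $-(-\varrho^{(j)})^{\delta}$ and its first two derivatives extend continuously across and converge). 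The remaining ingredient is $\alpha\circ h_j\to 0$ in $\smoo^2$ on $\overline{B^2(0,\mathfrak r)}$: here I would use that $h_j=\Phi_{\nu_j}^{-1}$ maps a fixed ball around $0$ into a shrinking neighbourhood of $0$ (since $z_\nu\to p$ forces $\delta_\nu,\eps_\nu\to0$, and $\Phi_{\nu_j}(0)\to 0$ after the centring by $A_{\nu_j}$), so $h_j\to 0$ locally uniformly; combined with $\alpha(0)=0$ and smoothness of $\alpha$, one gets $e^{\alpha\circ h_j}\to 1$ in $\smoo^2$, provided one also controls the first two derivatives of $h_j$ on the fixed ball. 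That derivative control is the delicate point, and it is exactly what Lemma~\ref{L:n_nu} (bounding $|b_{\nu}|\lesssim\mathrm{dist}(z_\nu,\bdy D)$, i.e.\ the only ``dangerous'' coefficient in $B_{\nu}\circ A_{\nu}$, against the scaling factors via part~\ref{I:not_blow}'s bound $\eps_\nu^{-1}\delta_\nu^{2m}\lesssim1$) is designed to provide; so I would invoke Lemma~\ref{L:n_nu} together with $(\bullet)$ and \ref{I:not_blow} of Result~\ref{R:scaling_core} to show the relevant derivatives of $h_j$ stay bounded on $\overline{B^2(0,\mathfrak r)}$. Multiplying the two $\smoo^2$-convergent factors then gives $\widetilde{\varrho}^{(j)}\to-(-\rho)^{\delta}$ in $\smoo^2$ on the ball. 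The main obstacle, as indicated, is this last derivative bookkeeping for $h_j$ on a fixed ball: one must verify that the polynomial parts $B_{\nu_j}$ of the scaling do not blow up the $\smoo^2$-norm of $h_j$ there, and Lemma~\ref{L:n_nu} under the hypothesis $m\geq2$ is the hook; the case $m=1$ is strongly pseudoconvex and either trivial or handled separately.
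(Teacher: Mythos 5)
Your route is the paper's route: the paper's own proof is a citation to Bedford--Pinchuk's argument on p.~172 of \cite{bedfordPinchuk:dCn+1nag91}, together with the single observation that Diederich--Fornaess must be applied to the auxiliary pseudoconvex domain $\U_p$ rather than to $D$ itself (Remark~\ref{R:varrhos}). You identify this observation correctly, and your Part~\ref{I:tildaed} argument (Diederich--Fornaess supplies $\delta$ and $\alpha$ with $-e^{\alpha}(-\varrho)^{\delta}$ plurisubharmonic on $\U_p$; biholomorphic pullback by $h_j$ preserves plurisubharmonicity and yields precisely $\widetilde{\varrho}^{(j)}$) is clean and correct.

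Your handling of Part~\ref{I:tail_cvgs}, however, misplaces the difficulty and leans on the wrong lemma. The key point you should be proving is $\alpha\circ h_j\to 0$ in $\smoo^2$ on a fixed ball, and for this you need $Dh_j$ and $D^2h_j$ to \emph{tend to zero} there, not merely to ``stay bounded'' as you write: since $(D\alpha)(h_j)\to (D\alpha)(0)$, which need not vanish, boundedness of $Dh_j$ would not give $D(\alpha\circ h_j)\to 0$. More importantly, invoking Lemma~\ref{L:n_nu} here is both unnecessary and unavailable: that lemma is stated under the hypotheses of paraboloidal approach and $m\geq 2$, neither of which appears in Lemma~\ref{L:varrhos}, and indeed Lemma~\ref{L:varrhos} is used in Claims~1--3 of the proof of Theorem~\ref{T:conv_to_1} \emph{before} the case distinction that introduces $m\geq 2$. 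In fact no quantitative estimate on $b_\nu$ is needed. Writing $h_j = (B_{\nu_j}\circ A_{\nu_j})^{-1}\circ\Delta_{\nu_j}^{-1}$ and noting that $\Delta_{\nu_j}^{-1}$ is linear with operator norm $\max(\delta_{\nu_j},\eps_{\nu_j})\to 0$, the chain rule gives $Dh_j$ (resp.\ $D^2h_j$) as $D\big((B_{\nu_j}\circ A_{\nu_j})^{-1}\big)$ (resp.\ $D^2\big((B_{\nu_j}\circ A_{\nu_j})^{-1}\big)$) evaluated at $\Delta_{\nu_j}^{-1}(\cdot)$ and multiplied by one (resp.\ two) factors of $D\Delta_{\nu_j}^{-1}$. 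Since $a_{\nu_j}\to 0$ forces $b_{\nu_j}\to 0$ and the coefficients of the polynomial correction $B_{\nu_j}$ to tend to zero (they remove harmonic terms that vanish in the limit), $D^k\big((B_{\nu_j}\circ A_{\nu_j})^{-1}\big)$ remains bounded near $0$ for $k=1,2$, so $D^kh_j\to 0$ uniformly on the fixed ball, which is all you need. With that correction in place, your multiplication of the two $\smoo^2$-convergent factors, restricted to compacta of $\OM$, gives the stated conclusion.
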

    
\begin{remark}\label{R:varrhos}
The proof of Lemma~\ref{L:varrhos} is the argument on p.~\!172
of \cite{bedfordPinchuk:dCn+1nag91} by Bedford--Pinchuk\,---\,which concerns a
class of domains in $\C^{n+1}$, $n\geq 1$\,---\,with the parameter $n=1$. Thus,
we shall not repeat this argument. We do, however, indicate one difference.
The domain to which the argument in \cite{bedfordPinchuk:dCn+1nag91} applies is
pseudoconvex, which isn't necessarily so with $D$ in Theorem~\ref{T:conv_to_1}.
We instead have an auxiliary pseudoconvex domain $\U_p$ for which we fix the defining
function $\varrho$ with the properties stated right after \ref{I:U_p_small} and \ref{I:U_p_bdy} above. We
apply the main theorem of Diederich--Fornaess in \cite{diederichFornaess:pdbspef77}
to $\U_p$, after which the steps in the proof
are exactly as in \cite{bedfordPinchuk:dCn+1nag91}.
\end{remark}

\section{The proof of Theorem~\ref{T:conv_to_1}}\label{S:conv_to_1}

The proof of Theorem~\ref{T:conv_to_1} depends on a key lemma with which this section begins.
Before stating and proving this lemma it would be useful to introduce one piece of notation.
Let $\OM\varsubsetneq \C^n$ be a domain and $z\in \OM$. We define
\[
  d_{\OM}(z)\,:=\,{\rm dist}(z, \bdy{\OM}).
\]
We shall liberally use the notations explained in Remark~\ref{R:sims}.

\begin{lemma}\label{L:conv_to_1_key}
Let the pair $(D,p)$ be as in Theorem~\ref{T:conv_to_1}, and let $\{z_\nu\}\subset D$ be a
sequence with paraboloidal approach to $p$. Let $\OM$ be any limit domain (in the Hausdorff
sense) obtained by applying Result~\ref{R:scaling_core} to $\{z_{\nu}\}$ and let $\rho$ denote
the defining function for it described by Result~\ref{R:scaling_core}-\ref{I:subseq_cvg}. Let us
write
\[
  P(Z, \ov{Z})\,=\,\sum_{k=2}^{2m}\,\sum_{l=1}^{k-1}C_{k,\,l}Z^l\ov{Z}^{k-l}.
\]
Assume $m\geq 2$. Then $C_{2,\,1}=0$.
\end{lemma}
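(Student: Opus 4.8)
The statement asserts that in the scaling limit $\rho(Z,W)=\re W+P(Z,\ov Z)$ obtained from a sequence with paraboloidal approach, the coefficient $C_{1,1}$ of the $|Z|^2$-term of $P$ vanishes, \emph{provided $m\geq 2$} (i.e.\ $p$ is \textbf{not} strongly pseudoconvex). The natural approach is to track where a $|Z|^2$-term in the limit polynomial could possibly come from, and show that the weights forced by paraboloidal approach kill it. Recall from Result~\ref{R:canon_form} that the defining function of $\bdy\Psi(D)$ near $0$ has the form $\re w+\psi(z,\zbar)+\Rem_1+(\im w)\Rem_2+\Rem_3$, with $\psi$ homogeneous of degree $2m$ and $\Rem_2(z)=O(|z|^{m+1})$ because $m\geq 2$. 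The maps $A_\nu$ and $B_\nu$ of Result~\ref{R:scaling_core} are a translation followed by a shear $w\mapsto w-b_\nu z$ and then a polynomial automorphism removing harmonic terms; the only source of a fresh $|Z|^2$-term after these (before dilation) is the shear interacting with the $\re w$-part, which contributes a term of size $\thickapprox|b_\nu||Z|^2$ plus contributions from the quadratic part of $\psi$'s Taylor expansion about $a_{\nu,1}$, which are $O(|a_{\nu,1}|^{2m-2})|Z|^2$.

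First I would write out, in the canonical coordinates, the coefficient $\sigma_{\nu,2}(Z,\ov Z)$ of the degree-$2$ part of $\varrho_\nu$ after applying $B_\nu\circ A_\nu$, isolating its $|Z|^2$-coefficient, call it $c_\nu$. Using Result~\ref{R:canon_form} (in particular $\Rem_1=O(|z|^{2m+1})$, $\Rem_2=O(|z|^{m+1})$, $\psi$ homogeneous of degree $2m$) together with the estimate $|a_{\nu,1}|\lesssim d_D(z_\nu)^{1/2}$ and $|\im\ahat_{\nu,2}|\lesssim d_D(z_\nu)^{1/2}$ coming from paraboloidal approach (as transcribed in \eqref{E:order_dist}), and the crucial bound $|b_\nu|\lesssim d_D(z_\nu)$ from Lemma~\ref{L:n_nu}, I would show $|c_\nu|\lesssim d_D(z_\nu)$. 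Next I need to compare this with the normalizing scales: after the dilation $\Delta_\nu(Z,W)=(Z/\delta_\nu,W/\eps_\nu)$, the $|Z|^2$-coefficient of $\varrho^{(j)}$ becomes $\eps_\nu\delta_\nu^{-2}c_\nu$, and this is what must converge to $C_{1,1}$. So it suffices to prove $\eps_\nu\delta_\nu^{-2}c_\nu\to 0$.

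The key inputs to close this are: $\eps_\nu=|a_{\nu,2}-\ahat_{\nu,2}|\thickapprox d_D(z_\nu)$ (the $w$-distance to the boundary along $\im w=$ const is comparable to the Euclidean distance, using the graph form of $\bdy\Psi(D)$), the normalization $(\bullet)$ in Result~\ref{R:scaling_core} which forces the top-degree part to survive, giving $\eps_\nu^{-1}\delta_\nu^{2m}\gtrsim 1$ after combining with part~\ref{I:not_blow}, hence $\delta_\nu^{2m}\thickapprox\eps_\nu$, i.e.\ $\delta_\nu^2\thickapprox\eps_\nu^{1/m}$ — wait, more carefully: $\delta_\nu\thickapprox\eps_\nu^{1/(2m)}$ only up to the normalization constant, but in any case $\eps_\nu\delta_\nu^{-2}\thickapprox\eps_\nu^{1-1/m}\to 0$ since $m\geq 2$ and $\eps_\nu\to 0$. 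Combining with $|c_\nu|\lesssim d_D(z_\nu)\thickapprox\eps_\nu\to 0$ gives $\eps_\nu\delta_\nu^{-2}|c_\nu|\lesssim\eps_\nu\cdot\eps_\nu^{1-1/m}=\eps_\nu^{2-1/m}\to 0$, which is even better than needed, and in the $\smoo^\infty$-convergence of Result~\ref{R:scaling_core}-\ref{I:subseq_cvg} this coefficient therefore has limit $0$, i.e.\ $C_{1,1}=0$.

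\textbf{Main obstacle.} The delicate point is establishing $|b_\nu|\lesssim d_D(z_\nu)$ and, relatedly, that $\delta_\nu\thickapprox\eps_\nu^{1/(2m)}$ \emph{genuinely uses $m\geq 2$}; the bound on $b_\nu$ is exactly Lemma~\ref{L:n_nu} (which relies on $\Rem_2=O(|z|^{m+1})$, valid only for $m\geq 2$), so the real work is to verify that this is the \emph{only} mechanism producing a $|Z|^2$-term and that no cross-term from the normalizing automorphism $B_\nu$ or from lower-order parts of the Taylor expansion of $\psi$ about the moving base point $a_{\nu,1}$ reintroduces a $|Z|^2$-coefficient of size $\gtrsim\delta_\nu^2/\eps_\nu$. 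Carefully bookkeeping the degree-$2$ part of $(B_\nu\circ A_\nu)_*\varrho$ — in particular checking that $B_\nu$, being designed only to strip harmonic terms, does not manufacture a large non-harmonic $|Z|^2$-coefficient — is the part I expect to require the most care, though it is ultimately a finite computation controlled by the estimates of Result~\ref{R:canon_form} and paraboloidal approach.
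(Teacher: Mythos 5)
Your overall strategy is the same as the paper's: isolate the $Z\ov Z$-coefficient $c_\nu$ of $\varrho_\nu$ after applying $B_\nu\circ A_\nu$, bound it via the paraboloidal estimates \eqref{E:a_bounds} together with Lemma~\ref{L:n_nu} (using $\Rem_2=O(|z|^{m+1})$, which is where $m\geq 2$ enters), note that $Q_\nu$ encoding $B_\nu$ is harmonic in $Z$ and so cannot contribute, and then show the rescaled coefficient tends to $0$. That is exactly what the paper does.

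However, the scaling bookkeeping in the last step is inverted, and the slip matters. Writing $\varrho_\nu(Z,W)=\re W + c_\nu Z\ov Z + \dots$, one has
\[
  (\Delta_\nu^{-1})^*\varrho_\nu(Z,W)\,=\,\varrho_\nu(\delta_\nu Z,\eps_\nu W)\,=\,
  \eps_\nu\re W + c_\nu\delta_\nu^2\,Z\ov Z + \dots,
\]
so after normalising the $\re W$-coefficient to $1$ (as Result~\ref{R:scaling_core}-\ref{I:subseq_cvg} requires, since the limit is $\re W + P$) the $Z\ov Z$-coefficient is $c_\nu\delta_\nu^2/\eps_\nu$, not $\eps_\nu\delta_\nu^{-2}c_\nu$. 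With the correct factor and $|c_\nu|\lesssim d_D(z_\nu)\thickapprox\eps_\nu$ this is $\lesssim\delta_\nu^2$, and one only needs $\delta_\nu\to 0$, which is immediate from Result~\ref{R:scaling_core}-\ref{I:not_blow} (because $\delta_\nu^{2m}\lesssim\eps_\nu\to 0$). Because your factor is upside down, you are forced to invoke $\delta_\nu\thickapprox\eps_\nu^{1/(2m)}$; but only the upper bound $\delta_\nu\lesssim\eps_\nu^{1/(2m)}$ is given by part~\ref{I:not_blow}. The lower bound $\delta_\nu\gtrsim\eps_\nu^{1/(2m)}$, i.e.\ that the top-degree part genuinely survives the normalisation, does \emph{not} follow from $(\bullet)$: the normalisation only says the largest coefficient among degrees $2,\dots,2m$ is $\thickapprox 1$, and which degree achieves the max may vary with $\nu$. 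Your two errors happen to combine to a true conclusion, but the clean argument — and the paper's — never needs a lower bound on $\delta_\nu$.
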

\begin{proof}
We adopt the notation introduced in the statement of Result~\ref{R:scaling_core} in its entirety.
If $(z,w)$ denotes the canonical coordinates centered at $p$ given by Result~\ref{R:canon_form}, then
the function $\psi$ given by the latter result has the form
\begin{equation}\label{E:sh_canon}
  \psi(z, \zbar)\,=\,\sum_{l=1}^{2m-1}c_{l}z^l\zbar^{2m-l}.
\end{equation}
We now analyse the canonical defining function given by \eqref{E:canon_defn} to conclude
that $\varrho_{\nu}(Z,W)$\,---\,as introduced by Result~\ref{R:scaling_core}\,---\,has the form
\begin{align}
  \varrho_{\nu}(Z,W)\,=&\,\re{W} + Q_{\nu}(Z) +
  \left[\sum_{l=1}^{2m-1}c_l(Z+a_{\nu,\,1})^l(\ov{Z}+\ov{a}_{\nu,\,1})^{2m-l}\right]
  + \Rem_1(Z+a_{\nu,\,1}) \notag \\
  +&\,\im(e^{i\theta_{\nu}}(W\!+\!b_{\nu}Z)+\ahat_{\nu,\,2})\Rem_2(Z+a_{\nu,\,1}) +
  \Rem_3\big(Z+a_{\nu,\,1}, \im(e^{i\theta_{\nu}}(W\!+\!b_{\nu}Z)+\ahat_{\nu,\,2})\big) \label{E:rho_nu} \\
  \equiv&\,\re{W} + \sum_{k=2}^{2m}\,\sum_{l=1}^{k-1}C^{(\nu)}_{k,\,l}Z^l\ov{Z}^{k-l}
  + O(|Z|^{2m+1}) + O(|Z|\,|\im{W}|) \notag \\
  & \hspace{0.5\textwidth} \forall (Z,W)\in B_{\nu}\circ A_{\nu}({\sf dom}(\varrho)\cap U^{\prime}), \notag
 \end{align}
where $(Z,W)$ denotes the standard global holomorphic coordinates on $\C^2$ given by its product structure,
$Q_{\nu}\in \C[Z]$ and whose role will be explained below,
and $\Rem_i$, $i=1, 2, 3$, are as described by Result~\ref{R:canon_form}. It might be helpful to
clarify/reiterate a couple of points.
\begin{itemize}[leftmargin=24pt]
  \item The coordinate system $(Z,W)$ is, in general, different from the canonical coordinates chosen
  to start off this proof.
  \item The polynomial $Q_{\nu}$ represents the difference between the function obtained by making
  the substitutions $z = Z+a_{\nu,\,1}$, 
  $w = e^{i\theta_{\nu}}(W\!+\!b_{\nu}Z)+\ahat_{\nu,\,2}$ in $\varrho$, and the
  function in the line following \eqref{E:rho_nu}; it therefore encodes the effect of the automorphism $B_{\nu}$. 
\end{itemize}
The proof of our lemma lies in estimating $|C^{(\nu)}_{2,1}|$. Different terms on the right-hand side of
\eqref{E:rho_nu} contribute to the coefficient $C^{(\nu)}_{2,1}$. We shall estimate these contributions
separately. Before deriving our key estimates, we note a pair of estimates.
Since, by hypothesis, $\{z_\nu\}\subset D$ is a sequence with paraboloidal approach to $p$, the
estimates \eqref{E:model_equiv} and \eqref{E:order_dist} are applicable and give the following:
\begin{equation}\label{E:a_bounds}
 |a_{\nu,\,1}|\,\lesssim\,d_D(z_{\nu})^{1/2} \quad \text{and}
 \quad |\im{\ahat_{\nu,\,2}}|\,\lesssim\,d_D(z_{\nu})^{1/2}
 \quad \forall \nu \text{ sufficiently large.} 
\end{equation}

It will be implicit that the estimates in this and the subsequent paragraphs involving the
quantity $d_D(z_{\nu})$ hold for
\textbf{all $\boldsymbol{\nu}$ sufficiently large.}
From the first inequality in \eqref{E:a_bounds} and by \eqref{E:rho_nu}, we have
\begin{equation}\label{E:first}
  \left. \begin{array}{r}
  		\text{the size of the coefficient of $Z\ov{Z}$ contributed} \\
		\text{by the sum in square brackets in \eqref{E:rho_nu}}
		\end{array}\right\}\,\lesssim\,d_D(z_{\nu})^{m-1}.
\end{equation}

To understand the contribution to the coefficient $C^{(\nu)}_{2,1}$ by the last three terms
in \eqref{E:rho_nu}, we must keep in mind that the expression of $\varrho$ comprises a Taylor polynomial,
where the degree of a non-zero monomial in $z$, $\zbar$ and $\im{w}$ is less than or equal to $2m$, plus
the associated remainder, which is $O(\|(z,\im{w})\|^{2m+1})$. With this in mind, the contribution to the
$Z\ov{Z}$-term (in the Taylor expansion) of $\varrho_{\nu}$ by the penultimate term
in \eqref{E:rho_nu} is a sum of monomials whose magnitudes are
\begin{align*}
  &\thickapprox\,|\im{\ahat_{\nu,\,2}}||a_{\nu,\,1}|^{\tau-2}|Z|^2, \; \; m+1\leq \tau\leq 2m, \text{ or } \\
  &\thickapprox\,|b_{\nu}||a_{\nu,\,1}|^{\tau-1}|Z|^2, \; \; m+1\leq \tau\leq 2m,
\end{align*}
where $\tau\geq m+1$ owing to our assumption that $m\geq 2$ and Result~\ref{R:canon_form}.
In view of Lemma~\ref{L:n_nu} and \eqref{E:a_bounds}, this implies:
\begin{equation}\label{E:third}
  \left. \begin{array}{r}
  		\text{the sizes of the coefficients of the above} \\
		\text{monomials contributing to the $Z\ov{Z}$-term of $\varrho_{\nu}$}
		\end{array}\right\}\,\lesssim\,d_D(z_{\nu})^{m/2}.
\end{equation} 
In view of the observation at the beginning of this paragraph (together with the order-of magnitude
estimate for $\Rem_3$ given by Result~\ref{R:canon_form}, which is why $s\geq 2$ below), the
contribution to the $Z\ov{Z}$-term
(in the Taylor expansion) of $\varrho_{\nu}$ by the last term in \eqref{E:rho_nu} is a
sum of monomials whose magnitudes are
\begin{align*}
  \left. \begin{array}{rl}
  		\thickapprox\,|b_{\nu}|^2|a_{\nu,\,1}|^{\tau}|\im{\ahat_{\nu,\,2}}|^{s-2}|Z|^2, 
  		& s\geq 2, \; \; \tau\geq 0, \text{ or } \\
  		\thickapprox\,|b_{\nu}||a_{\nu,\,1}|^{\tau-1}|\im{\ahat_{\nu,\,2}}|^{s-1}|Z|^2, 
  		& s\geq 2, \; \; \tau\geq 1, \text{ or } \\
  		\thickapprox\,|a_{\nu,\,1}|^{\tau-2}|\im{\ahat_{\nu,\,2}}|^{s}|Z|^2, 
  		& s\geq 2, \; \; \tau\geq 2,
  		\end{array}\right\} \; \text{where $s+\tau\leq 2m+1$}.
\end{align*}
In view of Lemma~\ref{L:n_nu} and \eqref{E:a_bounds} once again, this implies:
\begin{equation}\label{E:fourth}
  \left. \begin{array}{r}
  		\text{the sizes of the coefficients of the above} \\
		\text{monomials contributing to the $Z\ov{Z}$-term of $\varrho_{\nu}$}
		\end{array}\right\}\,\lesssim\,d_D(z_{\nu}).
\end{equation}  		
By an analogous (but much simpler) argument:
\begin{equation}\label{E:second}
  \left. \begin{array}{r}
  		\text{the size of the coefficient of the $Z\ov{Z}$-term of $\varrho_{\nu}$} \\
		\text{contributed by the third-from-last term in \eqref{E:rho_nu}}
		\end{array}\right\}\,\lesssim\,d_D(z_{\nu})^{m-\frac{1}{2}}.
\end{equation}

Recall that, by hypothesis, $m\geq 2$. Thus, from \eqref{E:first}--\eqref{E:second} and
recalling the meaning of 
$\delta_{\nu}$ and $\eps_{\nu}$ from Result~\ref{R:scaling_core}, we see that
$\eps_{\nu}\thickapprox d_D(z_{\nu})$ as $\nu\to \infty$, and
\begin{equation}\label{E:summary}
  \left. \begin{array}{r}
  		\text{the size of the coefficient of $Z\ov{Z}$ } \\
		\text{in (the Taylor expansion of) $\big(\Delta_{\nu}^{-1}\big)^*\varrho_{\nu}$}
		\end{array}\right\}\,\lesssim\,\frac{d_D(z_{\nu})\delta_{\nu}^2}{\eps_{\nu}}\,\thickapprox\,\delta_{\nu}^2.
\end{equation}
Owing to Result~\ref{R:scaling_core}-\ref{I:not_blow}, $\delta_{\nu}\to 0$ as $\nu\to \infty$. 
Owing to Result~\ref{R:scaling_core}-\ref{I:subseq_cvg},  the coefficients of $Z\ov{Z}$
in (the Taylor expansion of) $\big(\Delta_{\nu_j}^{-1}\big)^*\varrho_{\nu_j}$ converge
to $C_{2,\,1}$ as $j\to \infty$. From the last two facts and \eqref{E:summary} it follows
that $C_{2,\,1}=0$.		
\end{proof}

The following result will play a supporting role in the proof of Theorem~\ref{T:conv_to_1}. In
this result, and further in this section, $\kob_{\ome}$ will denote the Kobayashi pseudometric on a 
domain $\ome\subseteq \C^n$.

\begin{result}[paraphrasing {\cite[Proposition~6]{sibony:chm81}}]\label{R:koba_metric_lower}\label{R:Sibony}
Let $\ome$ be a domain in $\C^n$ and let $z \in \ome$. There exists a universal constant $\alpha > 0$ such
that if $u$ is a negative plurisubharmonic
function that is of class $\smoo^2$ in a neighbourhood of $z$ and satisfies
\[
  \big\langle v, (\cHess u)(z)v\big\rangle\,\geq\,c\|v\|^2 \quad \forall v\in \C^n,
\]
where $c$ is some positive constant, then 
\[
  \kob_{\ome}(z; v)\,\geq\,\left(\frac{c}{\alpha}\right)^{1/2}\!\frac{\|v\|}{|u(z)|^{1/2}}.
  \]
\end{result}

\noindent{Here $\langle\bcdot\,,\bcdot\rangle$ denotes the standard Hermitian inner product and $\cHess$ denotes the
complex Hessian. The objective of \cite{sibony:chm81} is to construct a pseudometric on $T^{1,0}\ome$\,---\,which
is known nowadays as the Sibony pseudometric\,---\,that is dominated by the Kobayashi pseudometric. The
lower bound in Result~\ref{R:Sibony} is actually a lower bound for the Sibony pseudometric, which results
in the the lower bound for $\kob_{\ome}(z; \bcdot)$.}
\smallskip

We are now in a position to provide:

\begin{proof}[The proof of Theorem~\ref{T:conv_to_1}]
Let $\{\nu_j\}$ be the sequence introduced by Result~\ref{R:scaling_core}-\ref{I:subseq_cvg}. We can find a subsequence,
which, without loss of generality, we can relabel as $\{\nu_j\}$ such that
\begin{equation}\label{E:rise}
  s_D(z_{\nu_1})\,<\,s_D(z_{\nu_2})\,<\,s_D(z_{\nu_3})<\dots<\,1.
\end{equation}
Let us write $\ome:=\Psi(D)$, where $\Psi$ is the
biholomorphic map, $\Psi: \C^2\to \C^2$, introduced by Result~\ref{R:canon_form}.
Then (writing $a_{\nu}:=\Psi(z_{\nu})$, $\nu\in \Z_+$), by definition
\begin{equation}\label{E:same}
  s_D(z_{\nu})\,=\,s_{\ome}(a_{\nu}) \; \; \text{for $\nu=1, 2, 3,\dots$}\,.
\end{equation}
Let $F_j: \ome\to B^2(0,1)$ be a holomorphic embedding such that $F_j(a_{\nu_j}) = 0$ and
$B^2\big(0, s_D(z_{\nu_j})\big)\subset F_j(\ome)$. That such an embedding exists
is the content of
\cite[Theorem~2.1]{dengGuanZhang:spsfbd12}\,---\,keeping \eqref{E:same} in mind.
As in the previous proof, we adopt the notation introduced in stating Result~\ref{R:scaling_core}. 
\smallskip

The core of our proof comprises the three claims below. For the theorems cited in Section~\ref{S:intro}, each
proof that relies on the scaling method features a version of either Claim~1 or~2 below.
In proofs featuring a version of Claim~1, its statement is closely tied to the the sequence $\{z_{\nu}\}$
being taken to lie in the inward normal to $\bdy{D}$ at $p$. For this reason, and as the argument for our
Claim~1 is short, we give a complete proof. Claim~2 merits a careful argument since earlier
arguments seem to rely on a stability theorem for the Sibony metric, which isn't known. In its place, we have
a simpler argument that just uses Result~\ref{R:Sibony}.
With these words, we state:
\pagebreak

\noindent{\textbf{Claim~1.} \emph{$F_j(\ome\cap U^\prime)\to B^2(0,1)$ in the Hausdorff sense.}}
\vspace{0.65mm}

\noindent{To see this, fix a $J\in \Z_+$. Then, by \eqref{E:rise},
\[
  B^2(0, s_J)\subset B^2(0, s_j)\subset F_j(\ome) \; \; \forall j\geq J,
\]
where we abbreviate $s_j:=s_D(z_{\nu_j})$.
Thus, we can define $\varPsi_{j,\,J}:= \left. F_j^{-1}\right|_{B^2(0,s_J)}$ for each $j\geq J$.
By construction, $\lim_{j\to \infty}\varPsi_{j,\,J}(0) = \lim_{j\to \infty}a_{\nu_j} = 0$. It
follows from Result~\ref{R:normal_conv} that
\begin{equation}\label{E:lim_const}
  \varPsi_{j,\,J}\lrarw 0 \; \text{uniformly on compact subsets of $B^2(0,s_J)$.}
\end{equation}
Now, fix a compact $K\subset B^2(0,1)$. Then, since $s_j\nearrow 1$,
there exists a $J(K)\in \Z_+$ such that
$K\subset B^2(0, s_{J(K)})$. By \eqref{E:lim_const}, there exists
a $\wt{J}(K)\geq J(K)$ such that 
\begin{align*}
  \varPsi_{j,\,J(K)}(K)&\subset \ome\cap U^\prime \\
  \Rightarrow\,K&\subset F_j(\ome\cap U^\prime) \; \; \forall j\geq \wt{J}(K).
\end{align*}
Since ${\sf range}(F_j)\subseteq B^2(0,1)$ for each $j\in \Z_+$, the above is enough to
conclude that $F_j(\ome\cap U^\prime)\to B^2(0,1)$ in the Hausdorff sense. Hence the claim.}
\medskip

As above, we write:
$h_j\,:=\,(\Delta_{\nu_j}\circ B_{\nu_j}\circ A_{\nu_j})^{-1} :
  \Phi_{\nu_j}(\ome\cap U^\prime)\to \ome\cap U^\prime$.
Next, consider the maps
\[
  G_j\,:=\,F_j\circ h_j: \Phi_{\nu_j}(\ome\cap U^\prime)\to B^{2}(0,1), \; \; j=1,2,3,\dots \,.
\]
Recall that, by Result~\ref{R:scaling_core}-\ref{I:Hauss}, $\Phi_{\nu_j}(\ome\cap U^\prime)\to \OM$ in the
Hausdorff sense. Thus, as $B^2(0,1)$ is bounded, it follows from Montel's theorem that we may
assume\,---\,by passing to a subsequence and relabelling if necessary\,---\,that the sequence $\{G_j\}$
converges uniformly on compact subsets to a holomorphic map $G_0: \OM\to \overline{B^2(0,1)}$.
By the plurisubharmonicity of $\|G_0\|^2$ and by the maximum principle it follows that if
$G_0(\OM)\cap \bdy{B^2(0,1)}\neq \varnothing$, then $G_0(\OM)\subseteq \bdy{B^{2}(0,1)}$.
Let ${\sf Arg}\big(\partial_{w}\varrho(a_{\nu,\,1}, \ahat_{\nu,\,2})\big)$ denote a number in
$(-\pi, \pi]$ such that
\[
  \frac{\partial\varrho}{\partial w}(a_{\nu,\,1}, \ahat_{\nu,\,2})\,=\,\exp
  \left\{i{\sf Arg}\big(\partial_{w}\varrho(a_{\nu,\,1}, \ahat_{\nu,\,2})\big)\right\}.
\]
Then, it follows from the definition of $A_{\nu}$ in Result~\ref{R:scaling_core} that
$\theta_{\nu} = -{\sf Arg}\big(\partial_{w}\varrho(a_{\nu,\,1}, \ahat_{\nu,\,2})\big)$. Write
\[
  \tau_j\,:=\,\left(0, 
  e^{-i\theta_{\nu_j}}\frac{a_{\nu_j,2}-\ahat_{\nu_j,2}}{\eps_{\nu_j}}\right)\,=\,(0, -e^{-i\theta_{\nu_j}}),
  \; \; j=1,2,3,\dots \,.
\]
Then, by construction, $G_j(\tau_j) = (0,0)$ for each $j\in \Z_+$, and $\tau_j\to (0, -1)$. Thus, as $\{G_j\}$
converges uniformly on compacts, $G_0(0, -1) = (0,0)$, and so 
$G_0(\OM)\subseteq B^2(0,1)$. We now prove the following
\medskip

\noindent{\textbf{Claim~2.} \emph{$G_0$ is a biholomorphic map.}}
\vspace{0.65mm}

\noindent{In view of a classical result of H.~\!Cartan, it suffices to find a point $z\in \OM$ such that
$G_0^\prime(z)$ is non-singular. To do so, we turn to Lemma~\ref{L:varrhos}. The functions appearing
below are exactly as given by Lemma~\ref{L:varrhos}. Let us fix a point $q_0\in \OM$ close to
$(-1,0)\in \OM$ at which the function
\[
  \wt{\varrho}^{(0)}\,:=\,-(-\rho)^{\delta}
\]
(here $\delta>0$ is as given by Lemma~\ref{L:varrhos}) is strictly plurisubharmonic.
By part~\ref{I:tail_cvgs} of Lemma~\ref{L:varrhos}, there exists a $J\in \Z_+$ and a constant $c>0$ such that
\begin{align}
  \big\langle v, (\cHess\wt{\varrho}^{(j)})(q_0)v\big\rangle\,&\geq\,c\|v\|^2, \; \; \text{and}
  \label{E:Levi_uni}\\
  \frac{\wt{\varrho}^{(0)}(q_0)}{2}\,\geq\,\wt{\varrho}^{(j)}(q_0)\,&\geq\,\frac{3\wt{\varrho}^{(0)}(q_0)}{2}
  \quad \forall j\geq J \text{ and $v\in \C^n$}
  \label{E:psh-value}
\end{align}
(
recall that $\cHess$ denotes the
complex Hessian). Let us write $\ome_j:=\Phi_{\nu_j}(\ome\cap U^\prime)$.
We have already seen that $G_0(\OM)\subset B^2(0,1)$. So, as $G_j\to G$ converges uniformly on compact
sets, there exists a number $R_1\in (0,1)$ such that $\{G_j(q_0): j\in \Z_+\}\subset B^2(0,R_1)$. Fix
$R_2\in (R_1, 1)$. Since, by Claim~1, $G_j(\ome_j)\to B^2(0,1)$ in the Hausdorff sense, we may
assume, by raising the value of the $J$ in \eqref{E:psh-value} if necessary, that
\begin{equation}\label{E:sub_dom}
  B^2(0, R_2)\subset G_j(\ome_j) \; \; \forall j\geq J.
\end{equation}}

Owing to the semicontinuity properties of the Kobayashi metric, the number
\[
  C\,:=\,\sup_{x\in \ov{B^2(0,R_1)},\,\|V\|=1}\kob_{B^2(0,R_2)}(x;V)
\]
is finite. By Result~\ref{R:Sibony}, \eqref{E:Levi_uni}, \eqref{E:psh-value}
and the fact that each $G_j$ is biholomorphic, we can find a constant $\wt{c}>0$ such that
\begin{align}
  \kob_{G_j(\ome_j)}\big(G_j(q_0); G_j^\prime(q_0)v\big)\,&=\,\kob_{\ome_j}
  (q_0; v) \notag \\
  &\geq\,\wt{c}\,\sqrt{\frac{2}{3}}\,\frac{\|v\|}{|\wt{\varrho}^{(0)}(q_0)|^{1/2}}\,\equiv\,\wt{C}\|v\|
  \quad \forall v\in \C^n \text{ and } \forall j\geq J. \label{E:interim}
\end{align}
The equality above follows from the fact that each $G_j$ is a biholomorphic map.
The inequality follows by taking $u$ to be $\wt{\varrho}^{(j)}$\,---\,which, by 
Lemma~\ref{L:varrhos}, is a negative plurisubharmonic function on $\ome_j$\,---\,in
Result~\ref{R:Sibony}. By \eqref{E:sub_dom} and the fact that the inclusion
$B^2(0,R_2)\hookrightarrow G_j(\ome_j)$, $j\geq J$, is holomorphic, thus metric-decreasing,
\[
  \kob_{B^2(0,R_2)}\big(G_j(q_0), G_j^\prime(q_0)v\big)\,\geq\,\kob_{G_j(\ome_j)}\big(G_j(q_0); G_j^\prime(q_0)\big)
  \; \; \forall v\in \C^n \text{ and } \forall j\geq J.
\]
Recall that $\{G_j(q_0): j\in \Z_+\}\subset B^2(0,R_1)$. Thus, given the definition of $C$ above, we conclude from
the last inequality and \eqref{E:interim} that
\[
  \|G_j^\prime(q_0)v\|\,\geq\,\frac{\wt{C}}{C}\,\|v\| \; \; \forall v\in \C^n \text{ and } \forall j\geq J.
\]
Hence, since $G_j\to G_0$ uniformly on compact subsets, we get
\[
  \|G_0^\prime(q_0)v\|\,\geq\,\frac{\wt{C}}{C}\,\|v\| \; \; \forall v\in \C^n.
\]
Therefore $G_0^\prime(q_0)$ is non-singular. By the observations at the beginning of the
previous paragraph, we conclude that $G_0$ is biholomorphic.
\medskip

We now need to establish the third claim. Its proof is, essentially, the proof given in the sub-section
entitled \emph{Surjectivity of $\widehat{\sigma}$} in the proof
of \cite[Proposition~3.3]{jooKim:bpwsft118}. Therefore, after some explanatory statements,
the reader will be referred to the latter for the proof of the following
\medskip

\noindent{\textbf{Claim~3.} \emph{$G_0(\OM)=B^2(0,1)$.}}
\vspace{0.65mm}

\noindent{Assume that $G_0(\OM)\varsubsetneq B^2(0,1)$. Then, there is a point $q\in \bdy{G_0(\OM)}$
belonging to $B^2(0,1)$. Pick $r\in (\|q\|, 1)$. Since, by Claim~1, $G_j(\ome_j)\to B^2(0,1)$ in the
Hausdorff sense, there exists a $J\in \Z_+$ such that
$B^2(0, r)\subset G_j(\ome_j)$ for every $j\geq J$.
Recall that, by definition, $F_j^{-1}(0) = a_{\nu_j}\to 0$ as $j\to \infty$. As $0\in \bdy(\ome\cap U^\prime)$,
applying Result~\ref{R:normal_conv} to
\begin{itemize}[leftmargin=24pt]
  \item the sequence $\big\{F_j^{-1}|_{B^2(0,r)}\big\}$, and
  \item the target-space $(\ome\cap U^\prime)$,
\end{itemize}
we conclude that $F_j^{-1}|_{B^2(0,r)}\to 0$ uniformly on compact subsets. In particular
$F_j^{-1}(q)\to 0$ as $j\to \infty$. Write $q_j:= F_j^{-1}(q)$. At this stage, the argument in
\cite[Proposition~3.3]{jooKim:bpwsft118} applies \emph{mutatis mutandis} (with some of the
required changes being:
\begin{align*}
  G_0 \; \text{replacing $\widehat{\sigma}$} \; \; \; &\text{and} \; \; \; F_j \; \text{replacing $f_j$}, \\
  \OM \; \text{replacing $\widehat{\OM}$} \; \; \; &\text{and} \; \; \; \ome\cap U^\prime \; \text{replacing $\OM$},
\end{align*}
etc.) This is because the argument referred to is insensitive to the difference between the domain
$\widehat{\OM}$ therein and the domain $\OM$ (as given by 
Result~\ref{R:scaling_core}) which is the replacement of the latter. Hence, the claim follows.}
\medskip

From Claim~2 and Claim~3 we conclude that $B^2(0,1)$ is biholomorphic to $\OM$. Now, since,
via an appropriate Cayley transformation, $B^2(0,1)$ is biholomorphic to the Siegel half-space
$\mathcal{H}\,:=\,\{(Z,W)\in \C^2 : \re{W}+\|Z\|^2<0\}$, we have
\begin{equation}\label{E:biholo}
  \mathcal{H} \; \text{is biholomorphic to} \; \OM.
\end{equation}

Let us now assume that $p$ is a \textbf{weakly} pseudoconvex point. Therefore, $m\geq 2$ in
Result~\ref{R:canon_form}. By Lemma~\ref{L:conv_to_1_key}, therefore, $C_{2,\,1}=0$.
This implies that the degree of $\nonh{P}$ is at least $4$. But this contradicts that fact that\,---\,in view of \eqref{E:biholo}
and Result~\ref{R:oelj}\,---\,the degree of $\nonh{P}$ is $2$. Hence, our assumption must be false. Therefore,
$p$ is a strongly pseudoconvex point.
\end{proof}

We conclude this section by elaborating on an observation made in Remark~\ref{R:BS}. To do so,
we rely on the notation in Sections~\ref{S:scale} and~\ref{S:conv_to_1}.

\begin{remark}\label{R:BS_discuss}
We must explain a gap in the proof of Theorem~1.1 by $\hkim$ in the paper
\emph{A note on the boundary behaviour of the squeezing function and Fridman invariant}
(Bull. Korean Math. Soc. {\bf 57} (2020), pp.~1241--1249), which we referred to in Remark~\ref{R:BS}.
The class of (pointed) domains considered in
their work is a subclass of the pointed domains $(D,p)$ described in Question~\ref{Q:corank_1}. For simplicity,
we focus on the set-up when $n=2$ and let $D\Subset \C^2$, where the pair $(D, p)$ is as in
Theorem~\ref{T:conv_to_1} with the further condition that $D$ is globally pseudoconvex.
$\hkim$ consider a sequence of points $\{z_{\nu}\}\subset D$ with \textbf{unrestricted} approach
to $p\in \bdy{D}$. Their proof, resulting in the same conclusion as of Theorem~\ref{T:conv_to_1}, relies
on a scaling construction equivalent to the one given by Result~\ref{R:scaling_core}. Let
$m$, $P$ and $\OM$ be as introduced in the latter result. The relationship between the notation in
Result~\ref{R:scaling_core} and that in H.~\!Kim~\emph{et al.} is as follows:
\[
  \OM\,=\,M_P, \quad\text{and}
  \quad \eta^\prime_{\nu}\,=\,(a_{\nu,\,1},\ahat_{\nu,\,2}),
  \; \ \delta_{\nu}\,\thickapprox\,\tau(\eta^\prime_{\nu}, \eps_{\nu}) \; \; \forall \nu\in \Z_+,
\]
where the expressions on the right are the notations in $\hkim$ 
It is well known that
\begin{equation}\label{E:variatn}
  d_{\ome}(a_{\nu})^{1/2}\lesssim \delta_{\nu}\lesssim d_{\ome}(a_{\nu})^{1/2m}, \quad\text{equivalently} \quad
  \eps_{\nu}^{1/2}\lesssim \tau(\eta^\prime_{\nu}, \eps_{\nu})\lesssim \eps_{\nu}^{1/2m}
\end{equation}
for each $\nu\in \Z_+$ and that, depending on the sequence $\{a_{\nu}\}$,
$\tau(\eta^\prime_{\nu}, \eps_{\nu})$ can vary through the entire range
indicated by \eqref{E:variatn}. This is the reason that the degree of $P$ is$\,\leq 2m$ and
\textbf{not necessarily} $2m$. An illustration of this is provided by a sequence
$\{z_{\nu}\}$ such that
\[
  \|\pi_p(z_{\nu}-p)\|\,\thickapprox\,d_{D}(z_{\nu})^{1/s} \; \; \forall\nu \ \text{sufficiently large}
\]
(in the notation of Definition~\ref{D:parabol}) and $s$ is a number satisfying $s\gg 2m$, in which
case the $P$ obtained by the algorithm
in Result~\ref{R:scaling_core} (equivalently, in Section~2 of the paper by $\hkim$) has degree
less than $2m$! Now, the closing arguments for the result by $\hkim$ are:
\begin{itemize}[leftmargin=24pt]
  \item using the additional fact that $s_{D}(z_{\nu})\to 1$ to infer that $M_P$ is biholomorphic to
  $B^2(0,1)$ (which is also our strategy above: note that $\OM=M_P$);
  
  \item concluding, using a result in \cite{berteloot:cmC2tag94} along the sames lines as
  Result~\ref{R:oelj}, that $2m=2$.
\end{itemize}
The second step in the above argument presupposes that the degree of $P$ \textbf{equals} $2m$.
There is a gap in explaining why this is so when $\{z_{\nu}\}$ has \textbf{unrestricted} approach.
(Perhaps, the authors have a different scaling algorithm in mind when the approach of
$\{z_{\nu}\}$ to $p$ is ``very tangential''.) Of course, the conclusion of $\hkim$ implies that the degree
of $P$ must equal $2m$, but assuming so at the juncture given by the second bullet-point above results
in a circular argument! As a contrast: in the present work, we circumvent this difficulty by assuming that $m>1$ and
using  Result~\ref{R:oelj} to arrive at a contradiction.
\end{remark}

\section{The proof of Theorem~\ref{T:HHR}}\label{S:HHR}

Before we can give a proof of Theorem~\ref{T:HHR}, we must fix some notation. First, recall that
given a domain $\OM\varsubsetneq \C^n$ and $\xi\in \bdy\OM$ such that $\bdy\OM$ is $\smoo^1$-smooth
around $\xi$, 
$\tgt_{\xi}(\bdy\OM)$ is as defined in Section~\ref{S:prelim}: i.e., the \textbf{affine} real hyperplane
in $\C^n$ containing $\xi$ that is tangent to $\bdy{\OM}$ at $\xi$. 
Next, if $V$ is a $\C$-linear subspace of $\C^n$,
$w\in \C^n$, and $r>0$, then we define
\[
  B_{w}(V;r)\,:=\,\{z\in \C^n: z-w\in V \ \text{and} \ \|z-w\|<r\}.
\]

We also need the following proposition. The result is an exercise in coordinate geometry
and is the outcome of Steps~3 and~4 of the proof of \cite[Theorem~1.1]{kimZhang:uspbccdCn16}
by Kim--Zhang. The inner product on $\C^n$ used in the statement below is the standard Hermitian
inner product on $\C^n$.

\begin{proposition}[following {\cite[Theorem~1.1]{kimZhang:uspbccdCn16}}]\label{P:HHR_key}
Let $\OM\varsubsetneq \C^n$, $n\geq 2$, be a bounded domain. Let $\xi\in \bdy{\OM}$ and suppose
a $\bdy\OM$-open neighbourhood of $\xi$ is $\smoo^1$-smooth.
Suppose there exists an open ball $B^{\prime}_{\xi}$ with centre $\xi$
such that $\OM\cap B^{\prime}_{\xi}$ is convex, and
\begin{equation}\label{E:conv-like}
  \tgt_{q}(\bdy{\OM})\cap \ombar = \{q\} \; \; \forall q\in \bdy{\OM}\cap B^{\prime}_{\xi}.
\end{equation}
Then, there exists a constant $\eps_0>0$ such that for each point $w\in \OM$ with
$\|w-\xi\|<\eps_0$, there exist points $s_1\equiv s_1(w),\dots, s_n\equiv 
s_n(w)\in \bdy{\OM}\cap B^{\prime}_{\xi}$ with the following properties:
\begin{enumerate}[leftmargin=27pt, label=$(\alph*)$]
  \item\label{I:s_1} $s_1$ satisfies
   $\|s_1-w\|\,=\,\sup\{r>0 : B^n(w,r)\subset \OM\}$.
  \item\label{I:other_s} For $j=2,\dots,n$, $s_j$ satisfies
  \[
    \|s_j-w\|\,=\,\sup\left\{r>0 : 
    B_{w}\big(\big({\rm span}_{\C}\{s_1-w,\dots, s_{j-1}-w\}\big)^{\perp}; r\big)\subset \OM\right\}.
  \]
  \item\label{I:j_discs} $B_{w}({\rm span}_{\C}\{s_j-w\}; \|s_j-w\|)\subset \OM\cap B^{\prime}_{\xi}$ for
  each $j=1,\dots, n$.
\end{enumerate}
The $n$-tuple of vectors $(e_1(w),\dots, e_n(w))$ defined by
 $e_j(w)\,:=\,(s_j-w)/\|s_j-w\|$, $j=1,\dots, n$,
is an orthonormal tuple of vectors. 
Furthermore, if we define the affine map $\Lambda_w: \OM\to \C^n$ by
\[
  \Lambda_{w}(z)\,:=\,\sum_{j=1}^n\frac{\langle z-w, e_j(w)\rangle}{\|s_j-w\|}\,\bas_j
\]
(where $\bas_1,\dots, \bas_n$ constitute the standard basis over $\C$ of $\C^n$), then there
exist constants $a^{j,\,k}\equiv a^{j,\,k}(w)$, $2\leq j\leq n$ and $1\leq k\leq j$, such that
\begin{align*}
  \tgt_{\!\Lambda_{w}(s_j)}\big(\Lambda_{w}(\bdy{\OM})\big)\,&=\,\left\{(Z_1,\dots,Z_n):
  \re\left(\sum\nolimits_{k=1}^{j-1}a^{j,\,k}Z_k + a^{j,\,j}(Z_j-1)\right)=0\right\},
  \quad j=2,\dots,n, \\
  \tgt_{\!\Lambda_{w}(s_1)}\big(\Lambda_{w}(\bdy{\OM})\big)\,&=\,\left\{(Z_1,\dots,Z_n):
  \re(Z_1-1)=0\right\},
\end{align*}
and such that $\sum_{k=1}^j|a^{j,\,k}|^2=1$ and $a^{j,\,j}>0$. Lastly,
$\tgt_{\!\Lambda_{w}(s_j)}\big(\Lambda_{w}(\bdy{\OM})\big)\cap \ov{\Lambda_{w}(\OM)} = \{\bas_j\}$
for $j=1,\dots,n$. 
\end{proposition}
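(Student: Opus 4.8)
The plan is to follow Steps~3 and~4 of the proof of \cite[Theorem~1.1]{kimZhang:uspbccdCn16}, turning the localization near $\xi$ into an explicit choice of $\eps_0$. Fix once and for all a unit outward normal $\nu_\xi$ to $\bdy\OM$ at $\xi$, and for a $\C$-linear subspace $V\subseteq\C^n$ let $P_V$ denote the Hermitian-orthogonal projection onto $V$. The geometric input I would record first is a half-space bound. Since $\xi\in\bdy\OM\cap B'_\xi$, \eqref{E:conv-like} gives $\tgt_\xi(\bdy\OM)\cap\ombar=\{\xi\}$; combining this with the convexity of $\OM\cap B'_\xi$ (at the $\smoo^1$ boundary point $\xi$ its tangent hyperplane, which is $\tgt_\xi(\bdy\OM)$, supports $\OM\cap B'_\xi$) and the connectedness of $\OM$ shows that $\ombar$ lies in the closed half-space $\{z:\re\langle z-\xi,\nu_\xi\rangle\le 0\}$ and meets the bounding hyperplane only at $\xi$. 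Hence, for any complex disc $\{w+\zeta v:|\zeta|\le R\}\subset\ombar$ with $\|v\|=1$ and $w\in\OM$, maximizing $\re(\zeta\langle v,\nu_\xi\rangle)$ over $|\zeta|\le R$ forces $R\,|\langle v,\nu_\xi\rangle|\le|\re\langle w-\xi,\nu_\xi\rangle|\le\|w-\xi\|$. Unless $\ombar\subseteq B'_\xi$ (in which case the localization below is trivial), the number $d_0:={\rm dist}\big(\ombar\setminus B'_\xi,\ \tgt_\xi(\bdy\OM)\big)$ is positive by compactness, and I would fix $\eps_0\in(0,d_0/2)$.

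Next I would construct $s_1,\dots,s_n$ recursively for a given $w\in\OM$ with $\|w-\xi\|<\eps_0$. Set $W_0:=\C^n$, and, having produced pairwise $\C$-orthonormal vectors $e_1(w),\dots,e_{j-1}(w)$, put $W_{j-1}:=({\rm span}_{\C}\{e_1(w),\dots,e_{j-1}(w)\})^\perp$. As $w$ is interior and $\OM$ is bounded, $R_j:=\sup\{r>0:B_w(W_{j-1};r)\subset\OM\}$ lies in $(0,\infty)$, the open ball $B_w(W_{j-1};R_j)$ is contained in $\OM$, and $\overline{B_w(W_{j-1};R_j)}$ (compact but not inside the open set $\OM$) meets $\bdy\OM$, necessarily at distance exactly $R_j$ from $w$. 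Choose $s_j$ to be such a point and set $e_j(w):=(s_j-w)/R_j$. Since $s_j-w\in W_{j-1}$, the vector $e_j(w)$ is a unit vector orthogonal to $e_1(w),\dots,e_{j-1}(w)$, so by induction $(e_1(w),\dots,e_n(w))$ is $\C$-orthonormal and $\|s_j-w\|=R_j$; moreover $s_1$ is a nearest boundary point, so property~\ref{I:s_1} holds, and property~\ref{I:other_s} is immediate from the definition of $R_j$.

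The crux is to show that, for $\eps_0$ as above, $\overline{B_w(W_{j-1};R_j)}\subset B'_\xi$ for every $j$ and every such $w$. For $u\in W_{j-1}$ with $0<\|u\|\le R_j$, the complex disc through $w$ of radius $R_j$ in the direction $u/\|u\|$ lies in $\overline{B_w(W_{j-1};R_j)}\subset\ombar$ (because $W_{j-1}$ is a complex subspace), so the half-space bound gives $|\langle u,\nu_\xi\rangle|\le R_j\,|\langle u/\|u\|,\nu_\xi\rangle|\le\|w-\xi\|<\eps_0$, whence ${\rm dist}(w+u,\tgt_\xi(\bdy\OM))\le\|w-\xi\|+|\langle u,\nu_\xi\rangle|<2\eps_0<d_0$. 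Therefore $\overline{B_w(W_{j-1};R_j)}\subset\{z\in\ombar:{\rm dist}(z,\tgt_\xi(\bdy\OM))<d_0\}\subseteq B'_\xi$. In particular $s_j\in\bdy\OM\cap B'_\xi$, and $B_w({\rm span}_{\C}\{s_j-w\};\|s_j-w\|)\subset B_w(W_{j-1};R_j)\subset\OM\cap B'_\xi$, which is property~\ref{I:j_discs}. I expect this uniform localization to be the main obstacle: it is the only point where the geometric hypothesis on $\xi$ is used essentially, and every remaining assertion reduces to it together with convexity and \eqref{E:conv-like}.

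Finally I would carry out the normalization. Fix $j$ and let $\nu_j$ be the unit outward normal to $\bdy\OM$ at $s_j$ (legitimate, as $\bdy\OM$ is $\smoo^1$ on $\bdy\OM\cap B'_\xi$). Since $s_j\in B'_\xi$ and $\OM\cap B'_\xi$ is convex with $\smoo^1$ boundary there, $\tgt_{s_j}(\bdy\OM)$ supports $\OM\cap B'_\xi$, hence the ball $B_w(W_{j-1};R_j)$; writing $s_j-w=R_je_j(w)$ and maximizing $\re\langle u,\nu_j\rangle$ over $u\in W_{j-1}$, $\|u\|\le R_j$, this forces $\re\langle e_j(w),\nu_j\rangle=\|P_{W_{j-1}}\nu_j\|$, and the Cauchy--Schwarz inequality $\re\langle e_j(w),\nu_j\rangle\le\|P_{W_{j-1}}\nu_j\|$ then yields equality, so $P_{W_{j-1}}\nu_j=\langle e_j(w),\nu_j\rangle\,e_j(w)$ with $\langle e_j(w),\nu_j\rangle>0$ real (nonzero, else $\overline{B_w(W_{j-1};R_j)}\subset\tgt_{s_j}(\bdy\OM)$, contradicting \eqref{E:conv-like}); in particular $\langle\nu_j,e_k(w)\rangle=0$ for $k>j$. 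The map $\Lambda_w$ is a $\C$-affine bijection with $\Lambda_w(w)=0$ and, since $\langle s_j-w,e_k(w)\rangle=\|s_j-w\|\,\delta_{jk}$, with $\Lambda_w(s_j)=\bas_j$; it therefore carries $\tgt_{s_j}(\bdy\OM)=\{z:\re\langle z-s_j,\nu_j\rangle=0\}$ onto $\tgt_{\bas_j}(\Lambda_w(\bdy\OM))$. Substituting $z-w=\sum_k\|s_k-w\|Z_k\,e_k(w)$ with $Z=\Lambda_w(z)$, the defining equation becomes $\re\big(\sum_{k=1}^{j}\|s_k-w\|\langle e_k(w),\nu_j\rangle Z_k-\|s_j-w\|\langle e_j(w),\nu_j\rangle\big)=0$; dividing by the positive constant $N_j:=\big(\sum_{k=1}^{j}\|s_k-w\|^2|\langle e_k(w),\nu_j\rangle|^2\big)^{1/2}$ puts it in the asserted form with $a^{j,k}:=\|s_k-w\|\langle e_k(w),\nu_j\rangle/N_j$ for $k\le j$ and $a^{j,k}:=0$ for $k>j$, so that $\sum_{k=1}^j|a^{j,k}|^2=1$ and $a^{j,j}>0$; for $j=1$ the nearest-point property gives $\nu_1=e_1(w)$ and hence $\{Z:\re(Z_1-1)=0\}$. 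Lastly, since $s_j\in\bdy\OM\cap B'_\xi$, \eqref{E:conv-like} gives $\tgt_{s_j}(\bdy\OM)\cap\ombar=\{s_j\}$, and applying the affine bijection $\Lambda_w$ yields $\tgt_{\bas_j}(\Lambda_w(\bdy\OM))\cap\overline{\Lambda_w(\OM)}=\{\bas_j\}$, as required.
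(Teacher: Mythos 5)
Your proposal is correct and follows exactly the route the paper indicates: Steps~3 and~4 of Kim--Zhang's proof of \cite[Theorem~1.1]{kimZhang:uspbccdCn16}, adapted to a domain that is only convex \emph{near} $\xi$. The paper does not supply its own argument beyond the citation, so the real content you had to supply is the uniform localization step -- showing $\overline{B_w(W_{j-1};R_j)}\subset B'_\xi$ for $\|w-\xi\|<\eps_0$ -- and your choice $\eps_0<d_0/2$ together with the half-space bound $R_j\,|\langle v,\nu_\xi\rangle|\le\|w-\xi\|$ does that cleanly; the rest of the argument (the recursive extraction of $s_1,\dots,s_n$, the Cauchy--Schwarz identification $P_{W_{j-1}}\nu_j=\langle e_j(w),\nu_j\rangle\,e_j(w)$ forcing $a^{j,j}>0$ and the vanishing of $\langle\nu_j,e_k(w)\rangle$ for $k>j$, and the normalization by $N_j$) reproduces Kim--Zhang verbatim, and the final assertion is just \eqref{E:conv-like} pushed through the affine bijection $\Lambda_w$.
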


We are now in a position to give:

\begin{proof}[The proof of Theorem~\ref{T:HHR}]
Let us assume that $D$ is not holomorphic homogeneous regular. Then, there exists a
sequence $\{z_{\nu}\}\subset D$ and a point $p\in \bdy{D}$ such that
\begin{equation}\label{E:0_ass_1}
  \lim_{\nu\to \infty}z_{\nu}\,=\,p \quad\text{and} \quad
  \lim_{\nu\to \infty}s_{D}(z_{\nu})\,=\,0.
\end{equation}
If $p$ is a point at which $\bdy{D}$ is strongly Levi pseudoconvex, then it is a \emph{spherically
extreme boundary point} in the sense of \cite[Section~3]{kimZhang:uspbccdCn16} and, thus,
$\lim_{\nu\to \infty}s_{D}(z_{\nu}) =1$. This is impossible in view of \eqref{E:0_ass_1}. Therefore,
$p$ is a point at which $\bdy{D}$ is \textbf{not} strongly Levi pseudoconvex. Therefore, by
hypothesis, there exists a biholomorphic map $\Psi: D\to \C^n$ having the properties stated in
Definition~\ref{D:well_cnvx}. Let us write
\[
  \OM\,:=\,\Psi(D), \; \; w_{\nu}\,:=\,\Psi(z_{\nu}), \; \; \text{and}
  \; \; \xi\,:=\,\Psi(p).
\]
Furthermore, as $\Psi\in \smoo(D\cup \{p\}; \C^n)$, we have (since $s_{D}$ is preserved by biholomorphic maps)
\begin{equation}\label{E:0_ass_2}
  \lim_{\nu\to \infty}w_{\nu}\,=\,\xi \quad\text{and} \quad
  \lim_{\nu\to \infty}s_{\OM}(w_{\nu})\,=\,0.
\end{equation}
Since, by Definition~\ref{D:well_cnvx}, there is an open ball $B_{\xi}$ with centre $\xi$ such that
\begin{itemize}[leftmargin=24pt]
  \item $\bdy\OM\cap B_{\xi}$ is $\smoo^1$-smooth and a strictly convex hypersurface, and
  \item $\tgt_{\xi}(\bdy\OM)\cap \ov{\OM} = \{\xi\}$,
\end{itemize}
it follows from Lemma~\ref{L:tangent} that there is an open ball $B^{\prime}_{\xi}\Subset B_{\xi}$ concentric
with $B_{\xi}$ such that
\[
  \tgt_{q}(\bdy{\OM})\cap \ombar = \{q\} \; \; \forall q\in \bdy{\OM}\cap B^{\prime}_{\xi}.
\]
This is precisely the condition \eqref{E:conv-like}. Of course, $\OM\cap B^{\prime}_{\xi}$ is convex.
We can thus apply Proposition~\ref{P:HHR_key}.
Without loss of generality, we may assume that $\|w_{\nu}-\xi\|<\eps_0$ for every $\nu$, where
$\eps_0$ is as given by Proposition~\ref{P:HHR_key}.  Let us abbreviate:
\begin{align*}
  \big(e_1(\nu),\dots, e_n(\nu)\big)\,&:=\,\big(e_1(w_{\nu}),\dots, e_1(w_{\nu})\big), \quad
  \Lambda_{\nu}\,:=\,\Lambda_{w_{\nu}}, \\
  s_j(\nu)\,&:=\,s_j(w_{\nu}), \; \; \; j=1,\dots,n, \\
  a^{j,\,k}(\nu)\,&:=\,a^{j,\,k}(w_{\nu}), \; \; \; k=1,\dots, j, \text{ and } j=2,\dots,n.
\end{align*}
where the objects on the right-hand side above are as given by Proposition~\ref{P:HHR_key} taking
$w=w_{\nu}$, $\nu=1,2,3,\dots$\,. Let $(Z_1,\dots,Z_n)$ be the global coordinates provided by the
product structure of $\C^n$. By the definition of these objects, for each $\nu$:
\begin{multline*}
  \Lambda_{\nu}\big(B_{w_{\nu}}({\rm span}_{\C}\{e_j(\nu)\}; \|s_j(\nu)-w_{\nu}\|)\big) \\
  =\,\{(Z_1,\dots, Z_n):|Z_j|<1 \text{ and } Z_k=0 \ \forall k\neq j\}, \; \; \; j=1,\dots, n.
\end{multline*}
From the above, and by the property~\ref{I:j_discs} in Proposition~\ref{P:HHR_key}, we have for each $\nu$:
\begin{equation}\label{E:discs}
  \{(Z_1,\dots, Z_n):|Z_j|<1 \text{ and } Z_k=0 \ \forall k\neq j\}\subset \Lambda_{\nu}(\OM\cap B^{\prime}_{\xi})
  \; \; \; \text{for $j=1,\dots,n$.}
\end{equation}
At this stage, we appeal to the fact that $\OM\cap B^{\prime}_{\xi}$ is convex. Hence, as
$\Lambda_{\nu}(\OM\cap B^{\prime}_{\xi})$ is convex, we deduce from \eqref{E:discs} that
\begin{equation}\label{E:acorn}
  \acorn\,:=\,\{(Z_1,\dots, Z_n): |Z_1|+\dots+|Z_n|<1\}\subset \Lambda_{\nu}(\OM\cap B^{\prime}_{\xi})
  \; \; \; \forall \nu.
\end{equation}

The argument in this paragraph has been given in \cite{kimZhang:uspbccdCn16} under the assumption
that $\OM$ is convex. Since the conclusion of this paragraph is a crucial part of this proof, but as $\OM$ is
\textbf{not necessarily} convex, we revisit that argument here. By \eqref{E:acorn},
$B^n(0, 1/\sqrt{n})\subset \acorn\subset \Lambda_{\nu}(\OM\cap B^{\prime}_{\xi})$
for each $\nu$. Furthermore, by the final conclusion of Proposition~\ref{P:HHR_key}, 
\begin{itemize}[leftmargin=30pt]
  \item[$(\bloz)$]
  $\Lambda_{\nu}(\OM)$ lies in one of the open half-spaces determined by
  $\tgt_{\!\Lambda_{\nu}(s_j(\nu))}\big(\Lambda_{\nu}(\bdy{\OM})\big)$ for $j=1,\dots,n$, for
  each $\nu$.
\end{itemize}
Thus, for each $\nu$:
\[
  {\rm dist}\big(0, \tgt_{\!\Lambda_{\nu}(s_j(\nu))}\big(\Lambda_{\nu}(\bdy{\OM})\big)\big)\,\geq\,1/\sqrt{n},
  \; \; \; j=1,\dots,n.
\] 
From this, the expressions for $\tgt_{\!\Lambda_{\nu}(s_j(\nu))}\big(\Lambda_{\nu}(\bdy{\OM})\big)$ given by
Proposition~\ref{P:HHR_key} and the fact that $\sum_{k=1}^j|a^{j,\,k}|^2=1$, it follows
that for each $\nu$:
\[
  \left|\left. \re\left(\sum\nolimits_{k=1}^{j-1}a^{j,\,k}(\nu)Z_k
  + a^{j,\,j}(\nu)(Z_j-1)\right)\right|_{Z=0}\right|\,\geq\,1/\sqrt{n},
  \; \; \; j=2,\dots,n,
\]
whence we have
\begin{equation}\label{E:key_coeffs}
  a^{j,\,j}(\nu)\geq 1/\sqrt{n} \; \; \; \text{for $j=2,\dots,n$, and for every $\nu$}.
\end{equation}
   
Now, let us define the invertible linear maps
\[
  L_{\nu} : (Z_1,\dots, Z_n)\longmapsto \big(Z_1, a^{2,\,1}(\nu)Z_1+a^{2,\,2}(\nu)Z_2,\dots,
  (a^{n,\,1}(\nu)Z_1+\dots +a^{n,\,n}(\nu)Z_n)\big).
\]
Owing to $(\bloz)$, we have for each $\nu$:
\begin{equation}\label{E:orthant}
  L_{\nu}\circ\Lambda_{\nu}(\OM)\subset \Big(\bigcap\nolimits_{2\leq j\leq n}\{(Z_1,\dots, Z_n): \re{Z_j}<a^{j,\,j}(\nu)\}
  \Big)\bigcap \{(Z_1,\dots, Z_n): \re{Z_1}<1\}.
\end{equation}
Next, for each $\nu$, the map
\[
  \Phi_{\nu}: (Z_1,\dots, Z_n)\longmapsto \left(\frac{Z_1}{2-Z_1},\,\frac{Z_2}{2a^{2,\,2}(\nu)-Z_2},
  \dots,\,\frac{Z_n}{2a^{n,\,n}(\nu)-Z_n}\right)
\]
is holomorphic on the corresponding region on the right-hand side of \eqref{E:orthant} and maps it 
biholomorphically onto $\D^n$. Now, by \eqref{E:acorn}, \eqref{E:key_coeffs} and the expressions
for the maps $L_{\nu}$, we can find a constant $\delta_0>0$ which depends only on $n$ such that
\begin{equation}\label{E:incl1}
  D(0,\delta_0)^n\subset L_{\nu}(\acorn)\subset L_{\nu}\circ\Lambda_{\nu}(\OM) \; \; \; \forall \nu.
\end{equation}
Since
$\Phi_{\nu}\circ L_{\nu}\circ \Lambda_{\nu}(\OM)\subset \D^n$ for each $\nu$,
this, together with \eqref{E:incl1}, gives us
\begin{equation}\label{E:incl2}
  D\big(0, \delta_0/(2+\delta_0)\big)^n\subset \Phi_{\nu}\circ L_{\nu}\circ \Lambda_{\nu}(\OM)
  \subset B^n(0,\sqrt{n}) \; \; \; \forall \nu.
\end{equation}
Finally, recalling the fact that each map $\Phi_{\nu}\circ L_{\nu}\circ \Lambda_{\nu}$ is a biholomorphism
and that, by construction, $\Phi_{\nu}\circ L_{\nu}\circ \Lambda_{\nu}(w_{\nu})=0$, we deduce from
\eqref{E:incl2} and from the fact that $s_{D} = s_{\OM}\big(\Psi(\bcdot)\big)$, the following:
\[
  s_D(z_{\nu})\,=\,s_{\OM}(w_{\nu})\,\geq\,\frac{\delta_0}{(2+\delta_0)n} \; \; \; \forall \nu.
\]
However, this contradicts the fact that $\lim_{\nu\to \infty}s_{D}(z_{\nu})=0$. Hence, our assumption above must be
false. Thus, $D$ is holomorphic homogeneous regular.
\end{proof}

\section{The proofs of Theorems~\ref{T:squee_0_I} and~\ref{T:squee_0_II}}\label{S:decay}

Before we prove the above-mentioned theorems, we require some definitions.

\begin{definition}\label{D:unramif_dom}
Let $D$ be a domain in $\C^n$ and let $\coll$ be a subset of $\hol(D)$.
\begin{enumerate}[leftmargin=27pt, label=$(\alph*)$]
  \item\label{I:manif} Let $X$ be a connected, Hausdorff topological space, and let $p: X\to \C^n$ be a local homeomorphism.
  Then, the pair $(X, p)$ is called an \emph{unramified domain} (over $\C^n$).
  
  \item Let $(X, p)$ be an unramified domain over $\C^n$. We call $(X, p, D, \inc)$
  an \emph{$\coll$-extension of $D$} if $\inc: D\to X$ is a continuous map such that
  $p\circ \inc = {\sf id}_D$ and such that for each $f\in \coll$ there exists a function $F_f\in \hol(X)$
  satisfying $F_f\circ\inc = f$. We call the function $F_f$ an \emph{analytic continuation} of $f$ to $X$.
  
  \item\label{I:env} An $\coll$-extension $(\wt{X}, \wt{p}, D, \wt{\inc})$ of $D$ is called an
  \emph{$\coll$-envelope of holomorphy}
  if for any $\coll$-extension $(X, p, D, \inc)$ of $D$, there exists a holomorphic map $h: X\to \wt{X}$ such that
  $\wt{p}\circ h = p$, $h\circ \inc = \wt{\inc}$ and\,---\,denoting by $\wt{F}_f$ and $F_f$ the analytic continuations
  of $f\in \coll$ to $\wt{X}$ and $X$, respectively\,---\,$\wt{F}_f\circ h = F_f$ for each $f\in \coll$.
\end{enumerate}  
\end{definition}

The following remark is, perhaps, in order.

\begin{remark}\label{R:in_order}
It is manifest from Definition~\ref{D:unramif_dom}-\ref{I:manif} that an unramified domain $(X,p)$ over
$\C^n$ acquires the structure of a complex manifold through the map $p$. This is presumed in the 
definition above of an $\coll$-extension. However, more can be said. With $D$ and $\coll$ as in
Definition~\ref{D:unramif_dom}:
\begin{itemize}[leftmargin=27pt]
  \item It follows easily from Definition~\ref{D:unramif_dom}-\ref{I:env} that the 
  data $(D, \coll)$ determines its $\coll$-envelope of holomorphy (assuming that an
  $\coll$-envelope of holomorphy exists) uniquely up to a
  biholomorphism.
  
  \item It is known that for any $\coll\subseteq \hol(D)$, the $\coll$-envelope of holomorphy exists.
  This is a well-known result of Thullen.
\end{itemize}
\end{remark}

We shall use the same notation as introduced in Sections~\ref{S:examples} and~\ref{S:scale}.
We are now in a position to prove the theorems to which this section is dedicated.

\begin{proof}[The proof of Theorem~\ref{T:squee_0_I}]
Fix a point $z\in D$. Let $F=(f_1,\dots, f_n): D\to B^n(0,1)$ be a holomorphic embedding such that
$F(z)=0$ and
$B^n(0, s_D(z))\subset F(D)$. The reason for the existence of such a map is discussed at the
beginning of the proof of Theorem~\ref{T:conv_to_1}.
Owing to the Hartogs phenomenon, 
$\exists \wt{f}_j\in \hol(\OM)$ such that
\[
  \left.\wt{f}_j\right|_D\,=\,f_j, \; \; j=1,\dots,n.
\]
Write $\wt{F}:=(\wt{f}_1,\dots, \wt{f}_n)$.
\medskip

\noindent{\textbf{Claim.} $\wt{F}(K\cap \bdy{D})\cap F(D) = \varnothing$.}
\vspace{0.65mm}

\noindent{Let us assume that $\wt{F}(K\cap \bdy{D})\cap F(D)\neq \varnothing$. Then, 
there exists a point $p_1\in K\cap \bdy{D}$ and a point $p_2\in D$ such that
$\wt{F}(p_1) = F(p_2) = w_0$. Now, consider a sequence $\{z_{\nu}\}\subset D$ such that
$z_{\nu}\to p_1$. Then, as $\wt{F}$ is continuous at $p_1$,
\[
  \lim_{\nu\to\infty}F(z_{\nu})\,=\,\lim_{\nu\to\infty}\wt{F}(z_{\nu})\,=\,w_0.
\]
As $w_0 = F(p_2)$, obviously $w_0\in F(D)$. Then, the above limit contradicts the
the fact that $F$ is a proper map onto $F(D)$.
So, our assumption must be false, which establishes the above claim.}%
\medskip

Define $u: \OM\to \R$ by
\[
  u(z)\,:=\,\sum_{j=1}^n|\wt{f}_j(z)|^2 \; \; \forall z\in \OM,
\]
which is clearly plurisubharmonic. As $u$ is continuous, there exists a point $p_3\in K$ such that
$u(p_3) = \sup_{K}u$. If $u(p_3)\geq 1$, then $p_3$ would be a point of global maximum of $u$
since $u(z) = \|F(z)\| < 1$ for each $z\in D = \OM\setminus K$, which would contradict the
plurisubharmonicity of the non-constant function $u$. Thus,
\begin{align} 
  {\sf range}(\wt{F})&\subseteq B^n(0,1), \label{E:in-ball} \\
  \wt{F}(K\cap \bdy{D})&\subset B^n(0,1). &\label{E:inside}.
\end{align}
By definition of $\wt{F}$ and our Claim above,
$\wt{F}(K\cap \bdy{D})\subset \bdy{F(D)}$.
Therefore
\begin{equation}\label{E:s_D_vs_Euc}
  s_D(z)\,=\,{\rm dist}\big(0, \bdy{F(D)}\big)\,\leq\,{\rm dist}\big(0, \wt{F}(K\cap \bdy{D})\big).
\end{equation}
From \eqref{E:inside}, \eqref{E:s_D_vs_Euc} and the fact that
$K_{B^n(0,1)}(0, \bcdot) = \tanh^{-1}\|\bcdot\|$, we have
\begin{equation}\label{E:s_D_vs_K}
  s_{D}(z)\,\leq\,\tanh\big(K_{B^n(0,1)}\big(0,\wt{F}(K\cap \bdy{D})\big)\big).
\end{equation}
By \eqref{E:in-ball}, $\wt{F}$ is $B^n(0,1)$-valued map. By the distance-decreasing property:
\[
  K_{B^n(0,1)}\big(0,\wt{F}(K\cap \bdy{D})\big)\,\leq\,K_{\OM}(z, K\cap \bdy{D}).
\]
Combining the latter inequality with \eqref{E:s_D_vs_K} gives \eqref{E:squee_0}.  
\end{proof}

\begin{proof}[The proof of Theorem~\ref{T:squee_0_II}]
Fix a point $z\in D$. Let $(\enve{D}, \pi, D, \inc)$ denote the data describing
the $\coll$-envelope of holomorphy of $D$\,---\,see Definition~\ref{D:unramif_dom}. 
Let $F=(f_1,\dots, f_n): D\to B^n(0,1)$ be a holomorphic embedding such that $F(z)=0$
and $B^n(0, s_D(z))\subset F(D)$ (which exists; see discussion above and
\cite[Theorem~2.1]{dengGuanZhang:spsfbd12}). There exists 
$\wt{f}_k\in \hol(\enve{D})$ such that
\[
  \wt{f}_k\circ\inc\,=\,f_k, \; \; k=1,\dots, n.
\]
Write $\wt{F}:=(\wt{f}_1,\dots, \wt{f}_n)$, and set
\[
  A\,:=\,\bdy_{\coll}\big(\inc(D)\big)\,\neq\,\varnothing,
\]
where $\bdy_{\coll}E$ denotes the topological boundary of any set $E\subset \enve{D}$.
That $A\neq \varnothing$ follows from our hypothesis that $\iness{D}\neq \varnothing$: in
fact, a consequence of Thullen's construction\,---\,referenced in Remark~\ref{R:in_order}\,---\,is
that
$\pi(A)\,=\,\iness{D}$.
Given the complex structure that $\enve{D}$ is endowed with, $\inc$ is a holomorphic embedding.
Thus, as
\[
  \wt{F}\circ\inc\,=\,F \quad\text{and}
  \quad\text{$\inc: D\to \inc(D)$ is proper onto $\inc(D)$},
\]
it follows from an arguement analogous to that of the Claim in the previous proof that
\begin{equation}\label{E:sept}
 \wt{F}(A)\cap F(D)\,=\,\varnothing.
\end{equation}
Now, \textbf{fix} a constant $\eps:\,0<\eps\ll 1$ and let $\OM\subset \enve{D}$ be a connected open set
such that
\[
  A\subset \OM \quad\text{and}
  \quad \wt{F}(\OM)\subset B^n(0, 1+\eps).
\]
Let us write $\Phi_{\eps}:=\left. (1+\eps)^{-1}\wt{F}\right|_{\OM}$. By construction:
\begin{align} 
  {\sf range}(\Phi_{\eps})&\subseteq B^n(0,1), \label{E:in-ball_II} \\
  \Phi_{\eps}(A)&\subset B^n(0,1). &\label{E:inside_II}.
\end{align}
As $\wt{F}\circ\inc = F$, by the definition of $A$ and \eqref{E:sept}, we have
$\Phi_{\eps}(A)\subset \bdy\Phi_{\eps}\circ\inc(D)$.
Thus
\begin{equation}\label{E:s_D_vs_Euc_II}
  s_D(z)\,=\,{\rm dist}\big(0, \bdy{F(D)}\big)\,=\,(1+\eps)\,{\rm dist}
  \big(0, \bdy{\Phi_{\eps}\circ\inc(D)}\big)\,\leq\,(1+\eps)\,{\rm dist}\big(0, \Phi_{\eps}(A)\big).
\end{equation}
From \eqref{E:inside_II} and \eqref{E:s_D_vs_Euc_II}, we have (arguing as in the previous proof)
\[
  (1+\eps)^{-1}s_{D}(z)\,\leq\,\tanh\big(K_{B^n(0,1)}\big(0,\Phi_{\eps}(A)\big)\big).
\]
By \eqref{E:in-ball_II}, $\Phi_{\eps}$ is $B^n(0,1)$-valued map. Thus, by the distance-decreasing property
(note that $\Phi_{\eps}\circ\inc(z) = 0$) and the last inequality: 
\begin{equation}\label{E:penult}
  s_{D}(z)\,\leq\,(1+\eps)\,\tanh\big(K_{B^n(0,1)}\big(0,\Phi_{\eps}(A)\big)\big)\,\leq\,(1+\eps)\,
  K_{\OM}(\inc(z), A).
\end{equation}

Let us fix a point $p\in \iness{D}$. Let $\{z_{\nu}\}\subset D$ be some sequence such that 
$z_{\nu}\to p$. We would have the desired result if we could show that for any arbitrary
subsequence $\{z_{\nu_k}\}$ there exists a subsequence 
$\big\{z_{\nu_{k_l}}\big\}\subset \{z_{\nu_k}\}$ such that
\begin{equation}\label{E:goal}
  \lim_{l\to \infty}s_{D}\big(z_{\nu_{k_l}}\big)\,=\,0.
\end{equation}
To this end, fix $\{z_{\nu_k}\}$ and consider the sequence $\{\inc(z_{\nu_k})\}\subset \enve{D}$.
Since $\inc: D\to \inc(D)$ is proper onto $\inc(D)$, there exists a point $\xi\in A$ and
a subsequence $\big\{z_{\nu_{k_l}}\big\}$ such that
\[
  \inc\big(z_{\nu_{k_l}}\big)\to \xi \; \; \text{as $l\to \infty$}.
\]
From this and \eqref{E:penult}, the conclusion \eqref{E:goal}\,---\,and therefore the result\,---\,follows.
\end{proof}

\section{A few questions on the squeezing function}\label{S:ques}

We conclude this section with a few questions that are suggested by either the discussion in Section~\ref{S:intro}
or by the proofs in the previous sections.
\smallskip

Before presenting the first question, we provide some context for it. Let $D$ be a bounded domain in $\C^n$,
$n\geq 2$, and let $p\in \bdy{D}$. Suppose $\bdy{D}$ is $\smoo^\infty$-smooth and Levi-pseudoconvex near $p$,
and let $\{z_\nu\}$ be a sequence in $D$ with paraboloidal approach to $p$. If the Levi-form of $\bdy{D}$
at $p$ has at least $(n-2)$ positive eigenvalues and $\bdy{D}$ is assumed to be of finite type
near $p$ then higher-dimensional analogues  of all the statements that make Lemma~\ref{L:conv_to_1_key}
possible hold true\,---\,see \cite[Section~1]{bedfordPinchuk:dCn+1nag91}. This, along with
Remark~\ref{R:BS_discuss} (see \ref{I:bihol} below for the connection),
motivates the following:

\begin{question}\label{Q:corank_1}
Let $D$ be a bounded domain in $\C^n$, $n\geq 2$, let $p\in \bdy{D}$, suppose $\bdy{D}$ is
$\smoo^\infty$-smooth near $p$ and that the Levi-form of $\bdy{D}$ at $p$ has 
at least $(n-2)$ positive eigenvalues. Assume $\bdy{D}$ is Levi-pseudoconvex and of finite
type near $p$. If, for some sequence $\{z_\nu\}\subset D$
with paraboloidal approach to $p$, $\lim_{\nu\to \infty}s_D(z_{\nu}) = 1$, then does it follow that
$p$ is a strongly pseudoconvex point?%
\end{question}

The discussion in Remark~\ref{R:BS_discuss} is the motivation for considering $\{z_{\nu}\}$
having paraboloidal approach (see its relevance in \ref{I:bihol} below).
Other factors that motivate Question~\ref{Q:corank_1}
involve the other key inputs to proving Theorem~\ref{T:conv_to_1}. Specifically:
\begin{enumerate}[leftmargin=27pt, label=$(\roman*)$]
  \item\label{I:conv} Let $\ome$ be a domain in $\C^n$, \textbf{not necessarily} bounded, and
  let $\xi\in \bdy{\ome}$
  be such that $\ome$ has reasonable boundary-geometry around $\xi$. 
  One needs a characterisation, given some fixed domain $\OM$ in $\C^n$, for a sequence
  $\{\varPsi_{\nu}\}\subset {\rm Hol}(\OM, \ome)$ to converge uniformly on compact subsets to $\xi$.
  Result~\ref{R:normal_conv} is a result of this sort whose hypothesis is relevant to the domains
  that arise in an argument that mimics the proof of Theorem~\ref{T:conv_to_1} with $D$ as in
  Question~\ref{Q:corank_1}.
  
  \item\label{I:bihol} While a version of Result~\ref{R:oelj} in $\C^n$, for general $n\geq 2$, with a hypothesis
  that is the obvious generalisation of the hypothesis of Result~\ref{R:oelj}, is unknown, it is 
  reasonable to ask whether the \emph{weaker} conclusion in the last sentence of
  Result~\ref{R:oelj} is true if, say, $P$ and $Q$ are quadratic in $(n-2)$ variables. In fact,
  the latter question may be of independent interest. Its relevance to Question~\ref{Q:corank_1}
  is suggested by the last two sentences of Remark~\ref{R:BS_discuss}.
\end{enumerate}  
The above provides additional context to Question~\ref{Q:corank_1} (and hints at an affirmative answer). 
\smallskip

Our next question is an analogue of Question~\ref{Q:corank_1} for convex domains. Let $D$ be a bounded
convex domain in $\C^n$,
$n\geq 2$, and let $p\in \bdy{D}$. Suppose $\bdy{D}$ is $\smoo^\infty$-smooth and of finite type near $p$.
This time, if $\{z_\nu\}$ is a sequence in $D$ with paraboloidal approach to $p$, then, while it is unclear
that the scaling method described in Section~\ref{S:scale} can be used to deduce a higher-dimensional
analogue of Lemma~\ref{L:conv_to_1_key}, convexity enables one to use a simpler scaling method to
achieve this. This is a method of affine scaling, of which there are many accounts in the literature.
An account that is the most relevant in the present case is the one in \cite[Section~2]{gaussier:ccdnag97}.
The hypothesis of finiteness of type of $\bdy{D}$ near $p$ also implies that an analogue
of our observation \ref{I:conv} above holds true in the present setting. These points suggest the
following:

\begin{question}\label{Q:convex}
Let $D$ be a bounded convex domain in $\C^n$, $n\geq 2$, let $p\in \bdy{D}$, suppose $\bdy{D}$ is
$\smoo^\infty$-smooth and of finite type near $p$. If, for some sequence $\{z_\nu\}\subset D$
with paraboloidal approach to $p$, $\lim_{\nu\to \infty}s_D(z_{\nu}) = 1$, then does it follow that
$p$ is a strongly pseudoconvex point?%
\end{question} 

We must mention that, given a bounded convex domain $D\subset \C^n$, $n\geq 2$, the
issue of whether $\lim_{z\to p}s_{D}(z)=1$ implies that $p$ is a strongly pseudoconvex point
is understood for $\bdy{D}$ having low regularity\,---\,see \cite{zimmer:cspobLe19}
by Zimmer. However, Zimmer's approach requires, essentially, that $\lim_{z\to p}s_{D}(z)=1$
for \textbf{all} $p\in \bdy{D}$ to get the stronger conclusion that $D$ is strongly pseudoconvex.
This, therefore, is a phenomenon different from the one that Question~\ref{Q:convex} relates
to.
\smallskip

Our last question arises immediately from the discussion above and the discussion that precedes
Theorem~\ref{T:conv_to_1}. Loosely speaking, one asks: given a domain $D\Subset \C^n$,
$n\geq 2$, a point $p\in \bdy{D}$, and given that $\bdy{D}$ has very regular Levi geometry
around $p$, how predictive is the existence of a sequence $\{z_{\nu}\}\subset D$ approaching
$p$, and satisfying $\lim_{\nu\to \infty}s_{D}(z_{\nu})=1$, of strong pseudoconvexity of
$\bdy{D}$ at $p$? This question gains further significance in view Remark~\ref{R:BS_discuss}.
A specific formulation of it is as follows:

\begin{question}\label{Q:predictive}
Does there exist a domain $D\Subset \C^n$, $n\geq 2$, that admits
\begin{itemize}[leftmargin=24pt]
  \item a point $p\in \bdy{D}$ such that the pair $(D,p)$ satisfies the conditions stated in
  Theorem~\ref{T:conv_to_1} or Question~\ref{Q:corank_1} or Question~\ref{Q:convex}; and
  \item some sequence $\{z_{\nu}\}\subset D$ approaching $p$ such that
  $\lim_{\nu\to \infty}s_{D}(z_{\nu})=1$;
\end{itemize}
and such that $\bdy{D}$ is not strongly Levi pseudoconvex?
\end{question}
\medskip

\section*{Acknowledgments}
\noindent{This work is supported in part by a UGC CAS-II grant (No.~F.510/25/CAS-II/2018(SAP-I)).
The author thanks Sushil Gorai for a series of interesting talks on the squeezing function, given at the
2019 Discussion Meeting on Several Complex Variables, which
introduced the author to the subject of this paper. He also thanks the Kerala School of Mathematics, Kozhikode, India, for
hosting the 2019 meeting.}

\end{document}